\documentclass[a4paper, 12pt, reqno]{amsart}
\usepackage[T1]{fontenc}
\usepackage[utf8]{inputenc}
\usepackage[english]{babel}
\usepackage[left=25mm,right=25mm,top=35mm,bottom=35mm]{geometry}
\usepackage{amsmath,amssymb,amsthm}
\usepackage{pgfplots}
\usepackage{pgfplotstable}
\usepackage[margin=0pt,font={small}]{caption}
\usepackage{booktabs}
\usepackage{color,colortbl}
\usepackage[hidelinks]{hyperref}
\usepackage{tabu}
\usepackage{graphicx} % Required for inserting images
%------------- definitions and macros

\newcommand\0{\boldsymbol{0}}
\newcommand{\eps}{\varepsilon}
 % or \bm{n}
\newcommand{\x}{\mathbf{x}} % or \bm{x}
\newcommand{\rro}{\boldsymbol{\rho}}
\newcommand{\dd}{\boldsymbol{\delta}}

\newcommand{\leb}{\mathcal{L}} % {\Omega}

\newcommand{\kk}{\mathbf{k}}

\newcommand{\ro}{\rho}
\newcommand{\ii}{\mathbf{i}}
\renewcommand{\a}{\mathbf{a}}
\renewcommand{\b}{\mathbf{b}}
\newcommand{\y}{\mathbf{y}} % or \bm{y}

\newcommand{\z}{\mathbf{z}} % or \bm{z}
\newcommand{\G}{\Gamma}
\newcommand{\N}{\mathbb{N}}
\renewcommand{\P}{\mathbb{P}}
\newcommand{\R}{\mathbb{R}}
\newcommand{\V}{\mathbb{V}}
\newcommand{\X}{\mathbb{X}}
\newcommand{\W}{\mathbb{W}}

\newcommand{\MM}{\mathcal{M}}
\newcommand{\NN}{\mathcal{N}}
\newcommand{\PP}{\mathcal{P}}

\renewcommand{\SS}{\mathcal{S}}
\newcommand{\TT}{\mathcal{T}}

\DeclareMathOperator*{\refine}{refine}

\newcommand{\norm}[3][]{#1\|#2#1\|_{#3}}
\newcommand{\normm}[3][]{#1\|#2#1\|^2_{#3}}
\newcommand{\dpi}{\mathrm{d}\pi}
\newcommand{\dx}{\mathrm{d}x}

\newcommand{\reff}[2]{\stackrel{\eqref{#1}}{#2}}	% to stack equations above (in)equalities
\newcommand{\refp}[2]{\stackrel{\phantom{\eqref{#1}}}{#2}}
%----------------  SC-FEM macros and definitions
 % family of nested sets of 1D nodes
\newcommand{\Colpts}{\mathcal{Y}} % set of collocation points
 % set of marked collocation points
\newcommand{\mf}{\kappa} % the function that determines the actual number of nodes
\newcommand{\bmf}{\boldsymbol{\kappa}} % the vectorised version of \mf above
\newcommand{\indset}{\Lambda} % index set
\newcommand{\markindset}{\Upsilon} % index set of marked indices
 % margin
\newcommand{\rmarg}{{\rm R}} % reduced margin
\newcommand{\nnu}{\boldsymbol{\nu}} % boldface \nu for multi-indices
\newcommand{\mmu}{\boldsymbol{\mu}} % boldface \nu for multi-indices
\newcommand{\eeps}{\boldsymbol{\eps}} % boldface epsilon for unit multi-indices
\newcommand\1{\boldsymbol{1}}
\newcommand{\Lagr}{I} % Lagrange interpolation operator
\newcommand{\LagrBasis}[2]{L_{#1}^{#2}} % Lagrange basis function
\newcommand{\LagrBasisHat}[2]{\widehat L_{#1}^{#2}} % Lagrange basis function with hat
\newcommand{\scsol}{u_{\bullet}^{\rm SC}} % SC-FEM solution
 % SC-FEM solution
\newcommand{\scsolhat}{\widehat u_{\bullet}^{\rm\, SC}} % enhanced SC-FEM solution
\newcommand{\ssolc}{u_0^{\rm semi}} % Semi-discrete solution on the coarest mesh
\newcommand{\ssol}{u_{\bullet}^{\rm semi}} % Semi-discrete solution 
\newcommand{\ssolh}{\hat u_{\bullet}^{\rm semi}} % Semi-discrete solution on the enhanced mesh
\newcommand{\ssoll}[1]{u_{#1}^{\rm semi}}
\newcommand{\ssolhl}[1]{\hat u_{#1}^{\rm semi}}

\pgfplotsset{
every axis/.append style={
font={\fontsize{8pt}{12pt}\selectfont},  
},
tick label style={font=\tiny},
title style={font=\tiny,yshift=-1.5ex},
xlabel style={font=\tiny,yshift=+1.0ex},
ylabel style={font=\tiny,yshift=-1.2ex},
}
%%%%%%%%%%%%%%%%%%%%
% Extra macros in appendix
%%%%%%%%%%%%%%%%%%%%

%%%%%%%%%%%%%%%%%%%%
% Colors
%%%%%%%%%%%%%%%%%%%%
\definecolor{myBrown}{rgb}{0.6 0.4 0.2}
\definecolor{myOrange}{rgb}{1.0 0.6 0.2}
\definecolor{myLightGray}{RGB}{235,235,235}
\definecolor{myViolet}{RGB}{153,50,204}
%%%%%%%%%%%%%%%%%%%%
% Environments
\newtheorem{theorem}{Theorem}

\newtheorem{lemma}[theorem]{Lemma}

\newtheorem{corollary}[theorem]{Corollary}
\newtheorem{algorithm}[theorem]{Algorithm}
\newtheorem{remark}[theorem]{Remark}

%%%%%%%%%%%%%%%%%%%%
% Subsection title in bold
\makeatletter
\def\@seccntformat#1{%
  \protect\textup{\protect\@secnumfont
    \ifnum\pdfstrcmp{subsection}{#1}=0 \bfseries\fi% subsection # in \bfseries
    \csname the#1\endcsname
    \protect\@secnumpunct
  }%
}
\makeatother
%%%%%%%%%%%%%%%%%%%%
%% Line numbers
%\newcommand*\patchAmsMathEnvironmentForLineno[1]{%
%  \expandafter\let\csname old#1\expandafter\endcsname\csname #1\endcsname
%  \expandafter\let\csname oldend#1\expandafter\endcsname\csname end#1\endcsname
%  \renewenvironment{#1}%
%     {\linenomath\csname old#1\endcsname}%
%     {\csname oldend#1\endcsname\endlinenomath}}% 
%\newcommand*\patchBothAmsMathEnvironmentsForLineno[1]{%
%  \patchAmsMathEnvironmentForLineno{#1}%
%  \patchAmsMathEnvironmentForLineno{#1*}}%
%\AtBeginDocument{%
%\patchBothAmsMathEnvironmentsForLineno{equation}%
%\patchBothAmsMathEnvironmentsForLineno{align}%
%\patchBothAmsMathEnvironmentsForLineno{flalign}%
%\patchBothAmsMathEnvironmentsForLineno{alignat}%
%\patchBothAmsMathEnvironmentsForLineno{gather}%
%\patchBothAmsMathEnvironmentsForLineno{multline}%
%}
%\usepackage[mathlines]{lineno}
%\renewcommand\linenumberfont{\fontsize{6}{7}\selectfont\sf}
%\linenumbers
%%%%%%%%%%%%%%%%%%%%
% Put date on each page
\usepackage{fancyhdr}
\cfoot{\small\thepage}
\lhead{}
\rhead{}
\setlength{\headheight}{14.0pt} % added in Jan 2025
\addtolength{\topmargin}{-6.0pt} % added in Jan 2025 | doesn't seem to affect anything

\advance\footskip0.5cm
\pagestyle{fancy}
% add \thispagestyle{fancy} after \maketitle
%%%%%%%%%%%%%%%%%%%%

%%%%%%%%%%%%%%%%%%%%
% Editing
\definecolor{otherblue}{rgb}{0,0.3,0.6}
\newcommand\rev[1]{{\color{black}#1}}
%\newcommand\newproof[1]{{\color{blue}#1}}
%\newcommand\abrevx[1]{{\color{black}#1}}
%%%%%%%%%%%%%%%%%%%%

%%%%%%%%%%%%%%%%%%%%
% Title, authors, ...
\title{Convergence analysis of the adaptive stochastic collocation finite element method}
\author{Alex Bespalov}
\address{School of Mathematics, University of Birmingham, Edgbaston, Birmingham B15 2TT, UK}
\email{a.bespalov@bham.ac.uk}
\author{Andrey Savinov}
\address{School of Mathematics, University of Birmingham, Edgbaston, Birmingham B15 2TT, UK}
\email{avs296@student.bham.ac.uk}

\subjclass[2010]{35R60, 65N12, 65N30, 65N35, 65N50, 65C20}

\keywords{adaptive methods, a~posteriori error estimation, convergence analysis,
stochastic collocation, finite element approximation, parametric PDEs}

\thanks{\emph{Acknowledgements.}
The work of the first author was supported by the EPSRC grant EP/W010925/1.
}
\date{\today}

\begin{document}

\begin{abstract}
This paper is focused on the convergence analysis of an adaptive stochastic collocation algorithm for the stationary diffusion equation
with parametric coefficient.
The algorithm employs sparse grid collocation in the parameter domain alongside finite element approximations in the spatial domain,
and adaptivity is driven by recently proposed parametric and spatial a posteriori error indicators. % for parametric and spatial approximations.
We prove that for a general diffusion coefficient with finite-dimensional parametrization, the algorithm drives the underlying
error estimates to zero.
Thus, our analysis covers problems with affine and nonaffine parametric coefficient dependence.
\end{abstract}

\maketitle
\thispagestyle{fancy}

\section{Introduction}
Sparse grid stochastic collocation is an established and well-studied computational method
for solving high-dimensional parametric partial differential equations (PDEs)
that are ubiquitous in uncertainty quantification models.
The sparsity of the underlying set of collocation points is critical for this task even for moderately high-dimensional problems,
as basic tensor-product grids of collocation points yield approximations suffering from the curse of dimensionality;
see, for example,\rev{~\cite{BabuskaNT07, NobileTW08a, NobileTW08b, Bieri_11_SCC, BeckTNT_12_OPA,
NobileTamelliniTempone2016, ErnstST_18_CSC, DNSZ2023}}.
Furthermore, in a typical setting of a high-dimensional parametric PDE, the solution is anisotropic in the parameter domain,
calling for adaptive enrichment of sparse grid collocation points.

Adaptively generated sparse grids trace back to the work of Gerstner and Griebel~\cite{GerstnerGriebel2003} on high-dimensional \rev{quadrature}.
Their ideas have found successful applications to collocation methods for parametric PDEs,
see, for example,~\cite{ChkifaCohenSchwab2014, NobileTamelliniTeseiTempone2016},
where parametric adaptivity is driven by heuristic error indicators that require solving additional PDEs.
An alternative approach, proposed in~\cite{GuignardN18}, is based on a posteriori error estimation.
Here, a reliable residual-based a posteriori error estimator is derived to control two distinct sources of discretization error
arising from parametric (sparse grid collocation) and spatial (finite element) components of approximations.
Crucially, this error estimator avoids the solution of additional PDEs;
it is also localizable and, thus, can be readily used in an adaptive algorithm.
In particular, the parametric component of the error estimator is used in~\cite{GuignardN18} to design an algorithm
that generates adaptive sparse grid (semidiscrete) approximations.
The convergence analysis of a modified version of the adaptive algorithm in~\cite{GuignardN18} is performed in~\cite{eest22}.
In~\cite{FeischlS21}, the authors extended the adaptive algorithm proposed in~\cite{GuignardN18} to include spatial (finite element) adaptivity
and proved convergence of the resulting `fully adaptive' algorithm.

It is important to note that the a posteriori error estimation framework developed in~\cite{GuignardN18}
and, hence, the adaptive algorithms in~\cite{GuignardN18, eest22, FeischlS21}, are inherently restricted to parametric PDEs whose inputs
have \emph{affine} dependence on parameters; cf.~\cite[section~4]{GuignardN18}.
In our recent work~\cite{Bespalov22}, we proposed a novel a posteriori error estimation strategy
for finite element-based sparse grid stochastic collocation approximations.
This error estimation strategy is applicable
to general elliptic parametric PDEs
with either affine or nonaffine parametric dependence of inputs.
It is similar in spirit to the hierarchical error estimation framework proposed in the context of stochastic Galerkin finite element methods,
see~\cite{bps14, bs16, bprr18, bprr18+, bx2020}.

In this contribution, we bridge a gap in the existing theory of adaptive algorithms for stochastic collocation finite element methods (SC-FEMs)
by extending the convergence analysis in~\cite{eest22, FeischlS21} to a broader class of parametric elliptic PDEs that covers
problems with \emph{nonaffine} parametric coefficients.
Specifically, we study convergence of an adaptive algorithm guided by reliable a posteriori error estimates
and the associated error indicators proposed in~\cite{Bespalov22}.
We note that the adaptive algorithm considered in this work is slightly different from the one proposed in~\cite{Bespalov22};
the difference lies in how the algorithm performs parametric marking and enrichment (see Remark~\ref{remark:param:marking}).
Our main result in Theorem~\ref{theorem:main} shows that the modified adaptive algorithm generates
SC-FEM approximations, with the corresponding sequence of error estimates converging to zero.
A key ingredient of our analysis is a certain summability property of Taylor coefficients for semidiscrete (finite element) approximations.
We prove that this summability property holds in the case of affine-parametric coefficients
satisfying the uniform ellipticity assumption (see Lemma~\ref{lemma:summability:affine}).
Furthermore, for PDEs with general parametric coefficients, we show that the assumptions guaranteeing the analyticity of the exact solution
in the parameter domain also ensure the required summability property of Taylor coefficients for semidiscrete approximations
(see Lemma~\ref{lemma:summability:general} and Remark~\ref{rem:summability:general}).

The outline of the paper is as follows.
After introducing the parametric model problem in section~\ref{sec:problem},
we set up its stochastic collocation discretization in section~\ref{sec:scfem}.
Section~\ref{sec:summability:property} focuses on the summability property of Taylor coefficients for semidiscrete (finite element) approximations.
In section~\ref{sec:error:estimate}, we recall the main components of the a posteriori error estimation strategy
developed in~\cite{Bespalov22} and present the adaptive SC-FEM algorithm.
Sections~\ref{sec:param:convergence} and~\ref{sec:spat:convergence} focus on proving convergence of parametric and spatial
error estimates for SC-FEM approximations generated by the adaptive algorithm.
In section~\ref{sec:main:results}, we formulate and prove the main result of this work.
The results of numerical experiments are presented and discussed in section~\ref{sec:numerics}.

%-----------------------------------------------------------------------------------------
\section{Parametric model problem} \label{sec:problem}

Let \rev{$D \subset \R^d \; (d=2,3)$} be a bounded Lipschitz domain with \rev{polytopal} boundary $\partial D$;
we will refer to $D$ as the \emph{spatial} domain.
Let us also introduce the \emph{parameter} domain
$\Gamma := \G_1 \times \G_2 \ldots \times \G_M \subset \R^M$,
where $M \in \N$ and each $\G_m$ ($m = 1,\ldots, M$) is a bounded interval in~$\R$.
Let $\pi(\y) := \prod_{m=1}^M \pi_m(y_m)$ be a probability measure on $(\G,\mathcal{B}(\G))$;
here,
$\mathcal{B}(\G)$ is the Borel $\sigma$-algebra on $\G$, and
$\pi_m$ denotes a probability measure on $(\G_m,\mathcal{B}(\G_m))$ for $m = 1,\ldots,M$.

We consider the following parametric elliptic problem:
find $u : \overline D \times \G \to \R$ satisfying
\begin{equation} 
\label{eq:pde:strong}
\begin{aligned}
-\nabla \cdot (a(\cdot, \y)\nabla u(\cdot, \y))
&= f
&& \text{in $D$},\\ 
u(\cdot, \y) &= 0  && \text{on $\partial D$} 
\end{aligned}
\end{equation} 
$\pi$-almost everywhere on $\G$ (i.e., almost surely).
Here, the forcing term $f \in L^2(D)$ is deterministic and
the coefficient $a$ is a random field on $(\G,\mathcal{B}(\G),\pi)$ over $L^\infty(D)$.
We assume that the coefficient $a$ is positive and bounded, i.e.,
\begin{equation} \label{eq:amin:amax}
   0 < a_{\min} \le \operatorname*{ess\;inf}_{x \in D} a(x,\y) \le 
   \operatorname*{ess\;sup}_{x \in D} a(x,\y) \le a_{\max} < \infty \quad
   \text{$\pi$-a.e. on $\G$}
\end{equation}
with some positive constants $a_{\min}$, $a_{\max}$.
This assumption implies the norm equivalence for the Sobolev space on the spatial domain:
for any $v \in \X := H^1_0(D)$ there holds
\begin{equation} \label{eq:norm:equiv}
   a_{\min} \|\nabla v\|_{L^2(D)}^2 \le
   \| a(\cdot,\y) \nabla v \|_{L^2(D)}^2 \le
   a_{\max} \|\nabla v\|_{L^2(D)}^2\quad
   \text{$\pi$-a.e. on $\G$}.
\end{equation}
For the purpose of finding the numerical solution to the parametric problem~\eqref{eq:pde:strong},
we write it in the following weak form:
given $f \in L^2(D)$, find $u : \G \to \X$ such that
\begin{align} \label{eq:pde:weak}
   \int_D a(x, \y) \nabla u(x,\y) \cdot \nabla v(x) \dx = \int_D f(x) v(x) \dx 
   \quad \forall v \in \X,\ \text{$\pi$-a.e. on $\G$}.
\end{align}
The above assumptions on the problem data
ensure 
the existence and uniqueness of the 
solution $u$
in the Bochner space $\V := L_\pi^p(\G; \X)$ for any $p \in [1, \infty]$;
see~\cite[Lemma~1.1]{BabuskaNT07} for details.

%-----------------------------------------------------------------------------------------
\section{Stochastic collocation finite element method} \label{sec:scfem}

For the numerical solution of problem~\eqref{eq:pde:strong} we apply
the stochastic collocation finite element method.
Let us recall the main ideas, including the construction of the underlying approximation spaces.

We denote by $\TT_\bullet$ a mesh on the spatial domain $D$
(i.e., a conforming \rev{partition of $D$ into compact nondegenerate simplices $T \in \TT_\bullet$}),
and let $\NN_\bullet$ denote the set of vertices of~$\TT_\bullet$.
\rev{Here and throughout the paper, we use $\bullet$ as a placeholder for the iteration counter;
see, e.g., $\TT_\ell$ in Algorithm~\ref{algorithm}.}
For mesh refinement, we employ newest vertex bisection (NVB); see, e.g., \cite{stevenson,kpp}. 
We assume that any mesh $\TT_\bullet$ employed for the spatial discretization
is obtained by (uniform or local) refinement of a given (coarse) initial mesh $\TT_0$.
For the numerical solution of~\eqref{eq:pde:weak}, 
we employ the space $\X_{\bullet}$ of continuous piecewise linear functions,
\begin{equation*}
 \X_\bullet := \SS^1_0(\TT_\bullet) :=
 \{v \in \X : v \vert_T \text{ is affine for all } T \in \TT_\bullet \} \subset \X = H^1_0(D).
\end{equation*}
In particular, $\X_0 := \SS^1_0(\TT_0).$
The standard basis of $\X_\bullet$ is given by
$\{ \varphi_{\bullet,\xi} : \xi \in \NN_\bullet \setminus \partial D \}$, where
$\varphi_{\bullet,\xi}$ denotes the hat function associated with the vertex $\xi \in \NN_\bullet$.

Let $\widehat\TT_\bullet$ be the mesh obtained by uniform NVB refinement of $\TT_\bullet$\rev{, i.e.,
$\widehat\TT_\bullet$ is the coarsest mesh obtained from $\TT_\bullet$ such that:
(i)~for $d=2$, all elements in~$\TT_\bullet$ are refined by three bisections (see, e.g.,~\cite[Figure~1]{bprr18});
(ii)~for $d=3$,
all elements are refined as described in~\cite[section~2.1]{egp18+} and illustrated in~\cite[Figures~2 and~3]{egp18+}}.
Then, $\widehat\NN_\bullet$ denotes the set of vertices of $\widehat\TT_\bullet$,
and $\NN_\bullet^+ := (\widehat\NN_\bullet \setminus \NN_\bullet) \setminus \partial D$
is the set of new interior vertices created by this refinement of $\TT_\bullet$.
The finite element space associated with $\widehat\TT_\bullet$ is denoted as
$\widehat\X_\bullet := \SS^1_0(\widehat\TT_\bullet)$,
and
$\{ \widehat\varphi_{\bullet,\xi} : \xi \in \widehat\NN_\bullet \setminus \partial D \}$ is the corresponding  basis of hat functions.

Let $\z$ be a fixed point in $\G$.
We denote by $u_{\bullet \z} \in \X_{\bullet}$ the Galerkin finite element approximation satisfying
\begin{align} \label{eq:sample:fem}
   \int_D a(x, \z) \nabla u_{\bullet \z}(x) \cdot \nabla v(x) \dx = \int_D f(x) v(x) \dx 
   \quad \forall v \in \X_{\bullet}.
\end{align} 
Hence, given a finite set $\Colpts_\bullet$ of collocation points in $\G$,
the SC-FEM approximation of the solution $u$ to parametric problem~\eqref{eq:pde:strong} is given by
\begin{equation} \label{eq:scfem:sol}
   \scsol(x, \y) := \sum\limits_{\z \in \Colpts_\bullet} u_{\bullet \z}(x) \LagrBasis{\bullet \z}{}(\y),
\end{equation}
where $\{\LagrBasis{\bullet \z}{}(\y), \z \in \Colpts_\bullet \}$ is a set of
\rev{multivariate} Lagrange basis functions constructed for the set of collocation points $\Colpts_\bullet$ and satisfying
$\LagrBasis{\bullet \z}{}(\z') = \delta_{\z\z'}$, $\forall \z,\, \z' \in \Colpts_\bullet$.

Note that the SC-FEM solution considered in this work
follows the so-called \emph{single-level} construction that employs the same finite element space $\X_{\bullet}$
for all collocation points $\z \in \Colpts_\bullet$ (cf.~\cite{BabuskaNT07,NobileTW08a,GuignardN18,Bespalov22}).
This is in contrast to the \emph{multilevel} SC-FEM approximations that allow
$\X_{\bullet \z} \not= \X_{\bullet \z'}$ for $\z \not= \z'$; see, e.g.,~\cite{LangSS20,FeischlS21,BS23}.

In the context of the numerical solution of high-dimensional parametric problems,
the state-of-the-art stochastic collocation methods employ the nodes of \emph{sparse grids} as collocation points.
We briefly describe the construction of sparse grids in the next section.

%------------------------------------------------------
\subsection{Sparse grid interpolation} \label{sec:sparse:grids}

Since any finite interval in $\R$ can be mapped to $[-1, 1]$ via appropriate linear transformation,
we can assume without loss of generality that $\G_1 = \G_2 = \ldots = \G_M = [-1, 1]$.
The construction of a sparse grid $\Colpts_\bullet \subset \Gamma = [-1,1]^M$ hinges on three ingredients:
\begin{itemize}

\item
a family of \emph{nested sets} of 1D nodes on $[-1,1]$
(in this work, we will consider the nested sets of Leja points and Clenshaw--Curtis (CC) quadrature points);

\item
a strictly increasing function $\mf : \N_0 \to \N_0$ satisfying $\mf(0) = 0$, $\mf(1) = 1$
(e.g., 
$\mf(i) = i$ for Leja points and
$\mf(i) = 2^{i-1}+1$, $i > 1$ for CC
nodes with the doubling rule).

\item
a monotone finite set $\indset_\bullet \,{\subset}\, \N^M$ of multi-indices;
specifically, 
$\indset_\bullet \,{=}\, \{ \nnu \,{=}\, [\nu_1,\ldots,\nu_M] : \nu_m \in \N,\, m = 1,\ldots,M \}$ is such 
that $\#\indset_\bullet < \infty$ and
\[
   \nnu \in \indset_\bullet \Longrightarrow \nnu - \eeps_m \in \indset_\bullet \quad
   \forall\,m=1,\ldots,M \text{ such that } \nu_m >1,
\]
where $(\eeps_m)_i = \delta_{mi}$ for all $i=1,\ldots,M$.
Note that the monotonicity property of $\indset_\bullet$ implies that $\1 = [1, 1,\dots,1] \in \indset_\bullet$;
\end{itemize}

Now, for each $\nnu \in \indset_\bullet$, the set of nodes along the $m$-th coordinate axis in $\R^M$
is given by the set $\Colpts_m^{\mf(\nu_m)}$ such that $\#\Colpts_m^{\mf(\nu_m)} = \mf(\nu_m)$,
and we define
\[
   \Colpts^{\,(\nnu)} := \Colpts_1^{\mf(\nu_1)} \times \Colpts_2^{\mf(\nu_2)} 
   \times \ldots \times \Colpts_M^{\mf(\nu_M)}.
\]
For a given index set $\indset_\bullet$, the sparse grid $\Colpts_\bullet$
of collocation points on $\G$ is defined~as
\[
   \Colpts_\bullet = \Colpts_{\indset_\bullet} := \bigcup_{\nnu \in \indset_\bullet} \Colpts^{\,(\nnu)}.
\]
Let $\Lagr_m^{\mf(\nu_m)} : C^0([-1,1];\X) \to \P_{\mf(\nu_m)-1}([-1,1];\X)$ be
the standard Lagrange interpolation operator associated with the set of 1D nodes $\Colpts_m^{\mf(\nu_m)} \subset [-1,1]$.
Here, $\P_{q}$ is the set of univariate polynomials of degree at most $q \in \N_0$.
Setting $\Lagr_m^{0} = 0$ for all $m = 1,\ldots,M$, we define 1D detail operators
\[
   \Delta_m^{\mf(\nu_m)} := \Lagr_m^{\mf(\nu_m)} - \Lagr_m^{\mf(\nu_m-1)}.
\]
Now, the sparse grid collocation operator associated with the sparse grid $\Colpts_{\indset_\bullet}$ is defined~as
\begin{equation} \label{eq:S:def}
   S_\bullet = S_{\indset_\bullet} := \sum\limits_{\nnu \in \indset_\bullet} \Delta^{\bmf(\nnu)},
\end{equation}
where $\bmf(\nnu) := [\mf(\nu_1), \ldots, \mf(\nu_M)]$ and
$\Delta^{\bmf(\nnu)} := \bigotimes_{m=1}^{M} \Delta_m^{\mf(\nu_m)}$ is called the hierarchical surplus operator.

The nestedness of 1D node sets and the monotonicity of the index set $\indset_\bullet$ 
ensure the interpolation property for the operator $S_{\indset_\bullet}$ (cf.~\cite{BNTL11}), i.e.,
\begin{equation} \label{eq:interpolation}
   S_{\indset_\bullet}: C^0(\G;\X) \to \P_{\indset_\bullet}(\G;\X)
   \text{\; is such that \;}
   S_{\indset_\bullet} v(\z) = v(\z)\ \; \forall\,\z \in \Colpts_{\indset_\bullet}, 
\end{equation}
where $\P_{\indset_\bullet} := \bigoplus_{\nnu \in \indset_\bullet} \P_{\bmf(\nnu)-\1}$ with $\P_{\bmf(\nnu)-\1} := \bigotimes_{m=1}^M \P_{\mf(\nu_m)-1}$.
Therefore, the SC-FEM solution defined by~\eqref{eq:scfem:sol} can be written as
\begin{equation} \label{eq:scfem:sol:S-form}
   \scsol(x,\y) = [S_{\bullet} U_\bullet] (x,\y) = \sum\limits_{\z \in \Colpts_\bullet} u_{\bullet \z}(x) \LagrBasis{\bullet \z}{}(\y)
\end{equation}
with a function $U_{\bullet} : \G \to \X_\bullet$ satisfying $U_{\bullet}(\z) = u_{\bullet \z}$ for all $\z \in \Colpts_\bullet$.

Let $\W \in \big\{ \X_0, \X_\bullet, \widehat\X_\bullet \big\}$.
We introduce a semidiscrete approximation $w : \Gamma \to \W$ such that $w(\cdot,\y) \in \W$ satisfies
\begin{align} \label{eq:semidiscrete}
   \int_D a(x, \y) \nabla w(x,\y) \cdot \nabla v(x) \dx = \int_D f(x) v(x) \dx 
   \quad \forall v \in \W,\ \text{$\pi$-a.e. in $\G$}
\end{align}
(for each $\W \in \big\{ \X_0, \X_\bullet, \widehat\X_\bullet \big\}$, the corresponding
semidiscrete approximation satisfying~\eqref{eq:semidiscrete} will be denoted by
$\ssolc,\, \ssol$ and $\ssolh$, respectively).
We assume that the following two representations of $w$ hold (cf.~\cite[eq.~(12)]{GuignardN18}):
\begin{equation} \label{eq:poly:representation}
    w(x,\y) := \sum\limits_{\ii \in \N_0^M} w_{\ii}(x) P_{\ii}(\y) = \sum\limits_{\nnu \in \N^M}\rev{\left[ \Delta^{\bmf(\nnu)} w\right]}(x,\y)
    \quad
    \text{$\pi$-a.e. on $\G$},
\end{equation}
where $P_{\ii}(\y) = \prod_{m=1}^M y_m^{i_m}$ and
\begin{equation} \label{eq:Taylor:coeff}
   w_{\ii}(x) = \frac{1}{\ii!} \frac{\partial^{\ii} w(x, \y)}{\partial \y^{\ii}}\Big\vert_{\y = \0},\quad
   \ii \in \N^M_0
\end{equation}
are the Taylor coefficients.
The following summability property of the Taylor coefficients $w_{\ii}(x)$ will play a key role in our analysis:
there exists $\rro = [\ro_1, \dots, \ro_M] > \1$
such that
\begin{equation} \label{eq:summability:property}
    \left(  \rro^{\ii} \norm[\big]{w_{\ii}}{\X} \right)_{\ii \in \N_0^M} \in l^2(\N_0^M)
    \quad \text{and}\quad
    \sum\limits_{\ii \in \N_0^M} \rro^{2\ii} \normm[\big]{w_{\ii}}{\X} \le C < \infty,
\end{equation}
where $C$ is independent of the underlying finite element space.
Hereafter, for two vectors $\a = [a_1,\ldots,a_M] \in \R^M$ and $\b = [b_1,\ldots,b_M] \in \R^M$, we use the notation
$ \a^\b := \prod_{m=1}^M a_m^{b_m}$ and $\a\b := \prod_{m=1}^M a_mb_m$;
we also write $\a > \b$ %(resp., $\a \ne \b$)
iff $a_m > b_m$ %(resp., $a_m \ne b_m$)
for all $m=1, \dots, M$;
furthermore, for $\ii \in \N_0^M$, we denote by $\ii! = \prod_{m=1}^M i_m!$ the multivariable factorial.

In the next section, we establish the conditions on the problem data
(specifically, on the coefficient $a(\cdot,\y)$ in~\eqref{eq:pde:strong}) that guarantee the summability property~\eqref{eq:summability:property}.

%-----------------------------------------------------------------------------------------
\section{The summability property of Taylor coefficients} \label{sec:summability:property}

We start with the case of the diffusion coefficient $a(\cdot,\y)$ having affine dependence on the~parameters.

\begin{lemma} \label{lemma:summability:affine}

Suppose that the diffusion coefficient has affine representation, i.e.,
\[
   a(x, \y) = a_0(x) + \sum\limits_{m=1}^M a_m(x) y_m\quad \text{for all $x \in D$ and $\y \in \G = [-1; 1]^M$}.
\]
If the expansion coefficients $a_m \in L^{\infty}(D)$, $m = 0,1,\ldots,M$,
satisfy the uniform ellipticity assumption, i.e.,
\[
   \exists\,r>0 \text{ such that }\ \sum\limits_{m=1}^M |a_m(x)| \le a_0(x) - r\quad \forall x \in D,
\]
then inequalities~\eqref{eq:amin:amax} hold and the Taylor coefficients $w_{\ii}(x)$ given by~\eqref{eq:Taylor:coeff}
satisfy the summability property~\eqref{eq:summability:property}.

\end{lemma}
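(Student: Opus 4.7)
The plan is to derive a Galerkin-type recursion for the Taylor coefficients $w_\ii$ and then run a weighted $\ell^2$ summation argument driven by the uniform ellipticity assumption. The bounds in~\eqref{eq:amin:amax} are immediate from uniform ellipticity and the triangle inequality (with $a_{\min} = r$ and $a_{\max} = 2\|a_0\|_{L^\infty(D)} - r$), so the semidiscrete problem~\eqref{eq:semidiscrete} is well posed for every $\y \in \G$ and each Taylor coefficient $w_\ii$ is a well-defined element of~$\W$.

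For the recursion, I would substitute the Taylor expansion $w(x,\y) = \sum_{\ii \in \N_0^M} w_\ii(x)\, \y^\ii$ and the affine expansion of $a$ into~\eqref{eq:semidiscrete} and match coefficients of each monomial $\y^\kk$ (legitimate because analyticity of $w$ in $\y$ near $\0$ gives uniqueness of the Taylor coefficients). This yields $w_\0$ as the Galerkin solution of~\eqref{eq:semidiscrete} with coefficient $a_0$, and, for every $\kk \neq \0$, the identity
\begin{equation*}
   \int_D a_0\, \nabla w_\kk \cdot \nabla v\,\dx
   = - \sum_{m:\, k_m \geq 1} \int_D a_m\, \nabla w_{\kk - \eeps_m} \cdot \nabla v\,\dx
   \quad \forall v \in \W.
\end{equation*}
In particular, each $w_\kk$ lies in $\W$, and the standard a priori estimate at $\y = \0$ gives $\|\nabla w_\0\|_{L^2(D)} \leq C_P \|f\|_{L^2(D)}/r$ with a constant independent of $\W$.

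Next, I would pick a weight vector $\rro > \1$ small enough that $\sum_{m=1}^M \rho_m\, |a_m(x)| \leq a_0(x) - r/2$ for a.e.\ $x \in D$, which is feasible by uniform ellipticity (for example, the common choice $\rho_m = 1 + r/(2\|a_0\|_{L^\infty(D)})$ works). Setting $\tilde w_\kk := \rro^\kk w_\kk$, multiplying the recursion by $\rro^\kk$, testing against $v = \tilde w_\kk$, and applying Young's inequality pointwise produces
\begin{equation*}
   \int_D a_0\, |\nabla \tilde w_\kk|^2\,\dx
   \leq \tfrac{1}{2} \sum_{m:\, k_m \geq 1} \rho_m \int_D |a_m|\, |\nabla \tilde w_{\kk - \eeps_m}|^2\,\dx
   + \tfrac{1}{2} \int_D \Big(\sum_{m:\, k_m \geq 1} \rho_m\, |a_m|\Big) |\nabla \tilde w_\kk|^2\,\dx.
\end{equation*}
Summing over $\kk \in \N_0^M \setminus \{\0\}$, shifting the index in the first right-hand term via $\kk \mapsto \kk + \eeps_m$, and using the defining bound on $\rro$ to absorb the resulting weighted $|\nabla \tilde w_\kk|^2$ contributions back into the left-hand side, the $\sum_{\kk \neq \0}\! \int_D a_0 |\nabla \tilde w_\kk|^2$ terms cancel and one is left with the telescoped inequality
\begin{equation*}
   \tfrac{r}{4} \sum_{\ii \in \N_0^M} \rro^{2\ii}\, \|\nabla w_\ii\|_{L^2(D)}^2
   \leq \tfrac{1}{2} \int_D a_0\, |\nabla w_\0|^2\,\dx,
\end{equation*}
which, combined with the a priori bound on $w_\0$, yields~\eqref{eq:summability:property} with a constant depending only on $r$, $\|a_0\|_{L^\infty(D)}$, and $\|f\|_{L^2(D)}$; in particular, this constant is independent of $\W$, and both assertions in~\eqref{eq:summability:property} coincide up to notation and are delivered at once by this bound.

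The main obstacle I anticipate is the bookkeeping in this last step: tuning $\rro$ just tightly enough that the weighted $|\nabla \tilde w_\kk|^2$ sums can be reabsorbed on the left while preserving a strictly positive residual coefficient (here $r/4$), and verifying that the index shift is legal at the level of the non-negative double sum so that no rearrangement issues arise. Everything else is a transparent consequence of the affine structure of $a$ together with standard Galerkin theory for the $\y=\0$ problem.
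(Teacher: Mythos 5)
Your argument is correct and is essentially the paper's own proof: the paper verifies the weighted uniform ellipticity condition $\sum_m \rho_m |a_m| \le \delta\, a_0$ with $\delta<1$ for weights $1<\rho_m<\alpha^{-1}$ and then cites \cite{bcm17} for exactly the recursion-plus-weighted-energy-summation computation that you carry out explicitly (your choice $\rho_m = 1 + r/(2\norm{a_0}{L^\infty(D)})$ is just a different admissible parametrization of the same weights). The one step you flag as an obstacle --- absorbing the weighted sum back into the left-hand side --- does require the standard precaution of first summing over finite downward-closed index sets (so that the shift $\kk\mapsto\kk-\eeps_m$ stays inside the set and the subtraction involves only finite quantities) before passing to the limit, which is precisely how \cite{bcm17} handles it.
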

\begin{proof}
It is easy to see that the uniform ellipticity assumption implies~\eqref{eq:amin:amax};
in particular, there holds
\begin{equation*}
   r \le a_0(x) - \sum\limits_{m=1}^M |a_m(x)| \le |a(x, \y)| \le a_0(x) + \sum\limits_{m=1}^M |a_m(x)| \le 2a_0(x) - r
\end{equation*}
and therefore,
\begin{equation*} 
   r \le a_{\min}\quad
   \text{and}\quad
   a_{\max} \le 2 \operatorname*{ess\;\sup}_{x\in D} a_0(x) - r.
\end{equation*}
Now, in order to prove the summability property~\eqref{eq:summability:property}
in the current setting of the affine representation satisfying the uniform ellipticity assumption,
let us define
\[
    \alpha := 1 - \frac{a_{\min}}{\norm{a_0}{L^\infty(D)}} \in (0,1).
\]
Then for any $\rro = [\ro_1, \dots, \ro_M]$ with $1<\ro_m<\alpha^{-1}$ ($m=1, \dots, M$), there holds
\begin{align*}
   \delta := \norm[\bigg]{\frac{\sum_{m=1}^{M} \ro_m |a_m|}{a_0}}{L^\infty(D)}
   & <
   \alpha^{-1} \, \norm[\bigg]{\frac{\sum_{m=1}^{M} |a_m|}{a_0}}{L^\infty(D)} =
   \alpha^{-1} \, \norm[\bigg]{1 - \frac{a_0 - \sum_{m=1}^{M} |a_m|}{a_0}}{L^\infty(D)}
   \\[4pt]
   & \le
   \alpha^{-1} \, \bigg( 1 - \frac{a_{\min}}{\norm{a_0}{L^\infty(D)}} \bigg) = 1.
\end{align*}
This shows that the weighted uniform ellipticity assumption from~\cite[Lemma~2.1 and Theorem~2.2]{bcm17}
is satisfied (cf.~\cite[eq.~(2.20)]{bcm17}).
Repeating the arguments in the proof of~\cite[Lemma~2.1 and Theorem~2.2]{bcm17} for the Taylor coefficients
$w_{\ii}$ of the semidiscrete approximation $w$ (rather than for the Taylor coefficients of the exact solution~$u$) proves that
$\left(\rro^{\ii} \norm[\big]{w_{\ii}}{\X} \right)_{\ii \in \N^M_0} \in l^2(\N^M_0)$ and
\[
   \sum\limits_{\ii \in \N^M_0} \left( \rro^{\ii} \norm[\big]{w_{\ii}}{\X} \right)^2 \le
   \frac{(2-\delta) \norm{a_0}{L^\infty}}{(2-2\delta) \big(\!\operatorname*{ess\;inf}_{x \in D} a_0(x)\big)^3} \, \norm{f}{L^2(D)} =: C < \infty,
\]
where $C$ is independent of the underlying finite element mesh; cf.~\cite[eq.~(2.22)]{bcm17}.
This completes the~proof.
\end{proof}

Next, inspired by the analysis in~\cite{BabuskaNT07}
for a general diffusion coefficient $a(\cdot,\y)$,
we identify the assumptions on $a(\cdot,\y)$ that ensure the summability property~\eqref{eq:summability:property}.

\begin{lemma} \label{lemma:summability:general}
Suppose that inequalities~\eqref{eq:amin:amax} hold for the diffusion coefficient $a(x, \y)$
and assume that for every $\y \in \G$, the derivatives of $a(x, \y)$
with respect to parameters satisfy the following inequalities:
\begin{equation} \label{eq:nonaffine}
   \left\lVert a^{-1}(\cdot, \y) \, \frac{\partial^\kk a(\cdot, \y)}{\partial \y^\kk} \right\rVert_{L^{\infty} (D)} \le
   (2\dd)^{-\kk} \, \kk!
   \quad \forall\,\kk \in \N_0^M\setminus\{ \0\}
\end{equation}
with some vector $\dd = [\delta_1, \dots, \delta_M] > \1.$
Then the Taylor coefficients $w_{\ii}(x)$ given by~\eqref{eq:Taylor:coeff}
satisfy the summability property~\eqref{eq:summability:property}.
\end{lemma}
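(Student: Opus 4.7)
The plan is to mimic the analytic regularity argument of~\cite{BabuskaNT07} but applied to the semidiscrete approximation $w$ rather than to the exact solution $u$. The key observation that enables finite-element-independent constants is that, because $\W$ does not depend on $\y$, the parametric derivatives $\partial^\ii w(\cdot,\y)/\partial \y^\ii$ lie in $\W$ and may themselves be used as admissible test functions in the semidiscrete problem.

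Step 1 (differentiate and obtain a recurrence). Differentiate the weak formulation~\eqref{eq:semidiscrete} $\ii$ times with respect to $\y$ and apply the Leibniz rule. For $\ii \neq \0$ this yields
\begin{equation*}
   \int_D a(x,\y)\, \nabla \partial^\ii w(x,\y) \cdot \nabla v(x)\,\dx
   = -\sum_{\0\neq \kk\le\ii} \binom{\ii}{\kk} \int_D \frac{\partial^\kk a(x,\y)}{\partial \y^\kk}\, \nabla \partial^{\ii-\kk} w(x,\y) \cdot \nabla v(x)\,\dx
\end{equation*}
for all $v\in\W$. Testing with $v = \partial^\ii w(\cdot,\y)\in\W$, writing the right-hand side as $a^{-1} \partial^\kk a \cdot a^{1/2}\nabla\partial^{\ii-\kk}w \cdot a^{1/2}\nabla\partial^\ii w$, and using Cauchy--Schwarz together with assumption~\eqref{eq:nonaffine} produces the recurrence
\begin{equation*}
   \eta_\ii(\y) \;\le\; \sum_{\0 \neq \kk\le \ii} \binom{\ii}{\kk} (2\dd)^{-\kk}\kk!\; \eta_{\ii-\kk}(\y),
   \qquad
   \eta_\ii(\y) := \norm[\big]{a^{1/2}(\cdot,\y)\nabla \partial^\ii w(\cdot,\y)}{L^2(D)}.
\end{equation*}
The anchor $\eta_\0(\y)$ is bounded by $a_{\min}^{-1/2}\,C_D\,\norm{f}{L^2(D)}$ (with $C_D$ the Poincaré constant) by testing~\eqref{eq:semidiscrete} with $v = w(\cdot,\y)$; crucially, this bound is independent of $\W$.

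Step 2 (resolve the recurrence). Following the induction argument in~\cite[Lemma~3.2, Theorem~3.3]{BabuskaNT07}, one shows that there exists $\tilde\rro\in\R^M$ with $\1 < \tilde\rro < \dd$ and a constant $C>0$ depending only on $a_{\min}$, $\norm{f}{L^2(D)}$, $D$ and $\dd$ (but \emph{not} on $\W$) such that
\begin{equation*}
   \eta_\ii(\y) \;\le\; C\,\ii!\,\tilde\rro^{-\ii}
   \quad\text{for every $\ii\in\N_0^M$ and $\pi$-a.e.\ $\y\in\G$}.
\end{equation*}
Evaluating at $\y=\0$, using the norm equivalence~\eqref{eq:norm:equiv}, and recalling~\eqref{eq:Taylor:coeff} yields $\norm{w_\ii}{\X}\le C\,a_{\min}^{-1/2}\,\tilde\rro^{-\ii}$.

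Step 3 (pass to summability). Fix any $\rro = [\rho_1,\dots,\rho_M]$ with $1<\rho_m<\tilde\rho_m$ for each $m$. Then
\begin{equation*}
   \sum_{\ii\in\N_0^M} \rro^{2\ii}\,\normm[\big]{w_\ii}{\X}
   \le C^2 a_{\min}^{-1} \sum_{\ii\in\N_0^M} (\rro/\tilde\rro)^{2\ii}
   = C^2 a_{\min}^{-1}\prod_{m=1}^M \frac{1}{1-(\rho_m/\tilde\rho_m)^2} < \infty,
\end{equation*}
which is~\eqref{eq:summability:property}; the right-hand side is manifestly independent of $\W$.

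The main obstacle is Step~2: a naive geometric ansatz $\eta_\ii \le \eta_\0\,\ii!\,\lambda^\ii$ fed into the recurrence yields the self-consistency requirement $\prod_{m}(1-(2\delta_m\lambda_m)^{-1})^{-1}-1 \le 1$, which fails for $M\ge 2$ unless one incorporates an additional polynomial correction (e.g.\ a $(|\ii|+1)^M$ factor) or works with a refined product form of $\lambda$. The careful bookkeeping needed to close the induction while still leaving some room to pick $\rro$ strictly greater than $\1$ is the delicate part; this is the content of the combinatorial estimates carried out in~\cite{BabuskaNT07}, and the point of the present lemma is that every step of that argument is purely parametric and therefore survives the substitution $u \rightsquigarrow w$ without introducing any dependence on the finite element subspace.
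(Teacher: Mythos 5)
Your overall strategy --- exploiting that $\partial^{\ii}w(\cdot,\y)\in\W$ is an admissible test function in~\eqref{eq:semidiscrete}, so that every constant in the parametric regularity argument is inherited from the continuous problem and is independent of the finite element space --- is exactly why the lemma is true, and your Steps~1 and~3 are correct. The paper, however, takes a different route at the crucial point: it invokes the conclusion of~\cite[Lemma~3.2]{BabuskaNT07} (applied verbatim to $w$ in place of $u$) to obtain an analytic extension of $\y\mapsto w(\cdot,\y)$ to a complex region $\Sigma(\G,\boldsymbol{\sigma})$ with $\1<\boldsymbol{\sigma}<\dd$ together with the uniform bound $\norm{w(\cdot,\boldsymbol{\zeta})}{\X}\le C_{\rm reg}$ there, and then applies the $M$-fold Cauchy integral formula on a polydisc to get $\norm{w_{\ii}}{\X}\le C_{\rm reg}\,\boldsymbol{\sigma}^{-\ii}$ directly. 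This sidesteps the multi-index derivative recurrence entirely: no factorials need to be tracked, and summability follows from a geometric series for any $\1<\rro<\boldsymbol{\sigma}$.

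The genuine gap in your argument is Step~2. The bound $\eta_{\ii}(\y)\le C\,\ii!\,\tilde\rro^{-\ii}$ with $\tilde\rro>\1$ is asserted but not established: substituting the ansatz $\eta_{\ii}\le A\,\ii!\,\tilde\rro^{-\ii}$ into your recurrence requires $\sum_{\0\ne\kk\le\ii}(2\dd/\tilde\rro)^{-\kk}\le 1$, and for $M\ge 2$ with $\delta_m$ only slightly larger than $1$ this forces $\tilde\rho_m<1$, i.e.\ the induction does not close --- as you yourself concede in your final paragraph. Deferring the fix to ``the combinatorial estimates carried out in~\cite{BabuskaNT07}'' does not work, because that reference never resolves a multi-index recurrence of this type: its Lemma~3.2 bounds only the one-directional derivatives $\partial^{k}_{y_m}u$ (a scalar recurrence, for which the geometric ansatz does close) and then passes to a complex-analytic extension in each variable; the bound on \emph{mixed} Taylor coefficients is obtained there, as in the present paper, via Cauchy's integral formula rather than by induction on $\ii$. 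The cleanest repair is therefore to replace Step~2 by the complex-analytic step: run your Step~1 recurrence one coordinate direction at a time to obtain analyticity on $\Sigma(\G,\boldsymbol{\sigma})$ with an $\W$-independent bound on the extension, and then recover $\norm{w_{\ii}}{\X}\le C_{\rm reg}\boldsymbol{\sigma}^{-\ii}$ from the multivariate Cauchy formula --- which is precisely the paper's proof.
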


\begin{proof}
Recall that $\W \in \big\{ \X_0, \X_\bullet, \widehat\X_\bullet \big\}$.
Taking into account the assumptions~\eqref{eq:nonaffine} on the diffusion coefficient,
we can repeat the proof of~\cite[Lemma~3.2]{BabuskaNT07} for
the semidiscrete approximation $w(\cdot,\y) \in \W$ satisfying~\eqref{eq:semidiscrete}
to make the following two conclusions:
\begin{itemize}
\item
the semidiscrete approximation $w$ as a function of $\y$ admits an analytic extension in the region
$\Sigma(\G, \boldsymbol{\sigma}) =
  \left\{ \boldsymbol{\zeta} \in \mathbb{C}^M, \; \text{dist}(\zeta_m, \G_m)\le \sigma_m, \; m=1,\dots, M\right\}
$
for some $\boldsymbol{\sigma} = [\sigma_1, \dots, \sigma_M]$ such that $\boldsymbol{1} < \boldsymbol{\sigma} < \dd$;
\item
there holds
$\max\limits_{\boldsymbol{\zeta} \in \Sigma(\G, \boldsymbol{\sigma})} \norm{w(\cdot, \boldsymbol{\zeta})}{\X} \le C_{\rm reg}$
with a positive constant $C_{\rm reg}$ that depends on the problem data and is independent of the discretization in the spatial domain
(in fact, $C_{\rm reg}$ is exactly the same as given in the proof of Lemma~3.2 in~\cite{BabuskaNT07}).
\end{itemize}
Hence, using Cauchy's integral formula in each $\y$-coordinate, we obtain
for any $\ii \in \N_0^M$ and for all $\y \in \G$:
\begin{equation*}
    \frac{\partial^{\ii} w(x, \y)}{\partial \y^{\ii}} =
    \frac{\ii!}{(2\pi i)^M} \int_{\partial B_{\sigma_M}(y_M)}  \dots \int_{\partial B_{\sigma_1}(y_1)}
    \frac{w(x, \boldsymbol{\zeta})}{(\zeta_1 - y_1)^{i_1+1} \dots (\zeta_M - y_M)^{i_M+1}} \;
    \mathrm{d}\zeta_1 \dots \mathrm{d}\zeta_M,
\end{equation*}
where $\partial B_{\sigma_m}(y_m) \subset \mathbb{C}$ for each $m=1, \dots, M$
denotes the circle of radius $\sigma_m$ centered at $y_m \in \G_m$.
Thus, we can estimate the $\X$-norm of the Taylor coefficient of $w$ as follows:
\begin{equation*}
    \norm[\big]{w_{\ii}}{\X} =
    \norm[\bigg]{ \frac{1}{\ii!} \frac{\partial^{\ii} w(x, \y)}{\partial \y^{\ii}}\Big\vert_{\y = \0} }{\X} \leq
    \boldsymbol{\sigma}^{-\ii}
    \max\limits_{\boldsymbol{\zeta} \in \Sigma(\G, \boldsymbol{\sigma})} \norm{w(\cdot, \boldsymbol{\zeta})}{\X} \leq
    C_{\rm reg} \, \boldsymbol{\sigma}^{-\ii}\quad \forall\,\ii \in \N_0^M.
\end{equation*}
Therefore, there exists a vector $\rro = [\ro_1,\ldots,\ro_M]$ such that
$\delta_m > \sigma_m> \ro_m > 1$ for all $m=1, \dots, M$ and 
$\left(\rro^{\ii} \norm[\big]{w_{\ii}}{\X} \right)_{\ii \in \N^M_0} \in l^2(\N^M_0)$.
This proves~\eqref{eq:summability:property} as required.
\end{proof}

\begin{remark} \label{rem:summability:general}
For the statement of Lemma~{\rm \ref{lemma:summability:general}} to hold,
the assumption on $a(x,\y)$ in~\eqref{eq:nonaffine}
is required only for $\y = \0$, i.e., it is sufficient to assume in Lemma~{\rm \ref{lemma:summability:general}} that
\begin{equation*} 
  \left\lVert
  a^{-1}(\cdot, \0) \, \frac{\partial^\kk a(\cdot, \y)}{\partial \y^\kk}\Big\vert_{\y = \0}
  \right\rVert_{L^{\infty} (D)}  \le
    (2\dd)^{-\kk} \, \kk!
   \quad \forall\,\kk \in \N_0^M\setminus \{\0\}
\end{equation*}
with some vector $\dd > \1$, and the proof can be modified accordingly.
However, if~\eqref{eq:nonaffine} holds true for every $\y \in \G$,
the exact solution $u(\cdot,\y)$ of~\eqref{eq:pde:weak}
admits an analytic extension into a region in $\mathbb{C}^M$ due to~\cite[Lemma~3.2]{BabuskaNT07}.
Importantly, this analyticity property also holds for
semidiscrete solutions $\ssol(\cdot,\y) \in \X_\bullet$ and $\ssolh(\cdot, \y)  \in \widehat\X_\bullet$
satisfying~\eqref{eq:semidiscrete} with $\W = \X_\bullet$ and $\W = \widehat\X_\bullet$, respectively.
We will exploit this fact in the proof of Theorem~{\rm \ref{theorem:parametic:alt}} below.
\end{remark}

%-----------------------------------------------------------------------------------------
\section{Error estimates, error indicators and adaptive algorithm} \label{sec:error:estimate}

In this section, we briefly recall the a posteriori error estimation strategy for SC-FEM approximations developed in~\cite{Bespalov22}
as well as the associated error indicators that steer adaptive refinement;
we refer to~\cite[section~4]{Bespalov22} for full details.

We denote by $\norm{\cdot}{}$ the norm in the Bochner space $\V = L^p_{\pi}(\G,\X)$ for a fixed $1 \le p \le \infty$
and we define $\norm{\cdot}{\X} := \norm{\nabla\cdot}{L^2(D)}$.
We set $p = 2$ when computing the norms in $\V = L^p_{\pi}(\G,\X)$ in practice.
The error estimation strategy developed in~\cite{Bespalov22} employs a hierarchical construction
(see, e.g.,~\cite[Chapter~5]{AinsworthOden00}).
This construction relies on
\rev{an \emph{enhanced} SC-FEM approximation, denoted by $\scsolhat$, and allows one to independently estimate
the spatial and parametric contributions to the overall discretisation error $u - \scsol$.
For the specific construction of $\scsolhat$, we follow~\cite[Remark~1]{Bespalov22}:
\[
   \scsolhat := S_\bullet \widehat U_\bullet + \Big( \widehat S_\bullet \widetilde U_{\bullet,0} - S_\bullet U_{\bullet,0} \Big),
\]
where
\[
   \widehat S_\bullet = S_{\widehat\indset_\bullet} := \sum\limits_{\nnu \in \widehat\indset_\bullet} \Delta^{\bmf(\nnu)},
\]
\[
   \text{$\widehat U_{\bullet} : \G \to \widehat\X_\bullet$ satisfies $\widehat U_{\bullet}(\z) = \widehat u_{\bullet \z}$ for all $\z \in \Colpts_\bullet$},
\]
\[
   \text{$\widetilde U_{\bullet,0} : \G \to \X_0$ is such that
   $\widetilde U_{\bullet,0}(\z') = u_{0\z'}$ for all $\z' \in \widehat\Colpts_\bullet$},
\]
\[
   \text{$U_{\bullet,0} : \G \to \X_0$ is such that $U_{\bullet,0}(\z) = u_{0\z}$ for all $\z \in \Colpts_\bullet$}.
\]
Here, $\widehat u_{\bullet \z} \,{\in}\, \widehat\X_{\bullet}$ denotes the enhanced Galerkin solution satisfying~\eqref{eq:sample:fem}
for all $v \,{\in}\, \widehat\X_{\bullet}$,
the functions $u_{0 \z'},\, u_{0 \z} \in \X_0$ solve~\eqref{eq:sample:fem} with $\X_\bullet$ replaced by $\X_0$, and
$\widehat\Colpts_\bullet$ is the set of collocation points generated by the enriched index set $\widehat\indset_\bullet$
that is obtained from the current index set $\indset_\bullet$ by adding its reduced margin
\begin{equation} \label{eq:reduced:margin}
   \rmarg_{\bullet} = \rmarg({\indset_\bullet}) :=
   \{ \nnu \in \N^{M} \backslash \indset_\bullet :
   \nnu {-} \eeps_m \in \indset_\bullet \text{ for all } m = 1, \dots, M \text{ such that } \nu_m \,{>}\, 1 \}.
\end{equation}
%\newproof{The use of coarse-mesh approximations for the parametric component of the enhanced approximation
%is motivated by the analyticity of the true solution (with respect to parameters).}
}%
Under the assumption that $\scsolhat$ reduces the discretization error,
i.e.,
\begin{equation} \label{eq:saturation}
   \norm{u - \scsolhat}{} \le q_{\rm sat} \, \norm{u - \scsol}{}
\end{equation}
with a constant $q_{\rm sat} \in (0,1)$ independent of discretization parameters,
the error estimate in the SC-FEM approximation $\scsol$
is given by the sum of spatial and parametric contributions
(see equations~(22),~(23) in~\cite{Bespalov22}):
\begin{equation} \label{eq:error:estimate}
   \norm{u - \scsol}{} \;\rev{\le}\;
   \rev{\big(1 - q_{\rm sat}\big)^{-1} \norm{\scsolhat - \scsol}{}}
   \le \big(1 - q_{\rm sat}\big)^{-1} \big(\mu_\bullet + \tau_\bullet\big).
\end{equation}
\rev{Here, $\mu_\bullet$ and $\tau_\bullet$ are, respectively, the spatial and parametric error estimates defined as follows
(cf.~\cite[eq.~(24)]{Bespalov22} and~\cite[\S4.2 and Remarks~1 and~4]{Bespalov22}, respectively):
\begin{align}  \label{eq:spatial:estimate}
   \mu_\bullet   & :=  \norm{S_\bullet (\widehat U_\bullet - U_\bullet)}{} =
    \norm[\bigg]{\sum\limits_{\z \in \Colpts_\bullet} (\widehat u_{\bullet \z} - u_{\bullet \z})\, \LagrBasis{\bullet \z}{}}{}
\end{align}
and
\begin{equation} \label{eq:param:estimate}
   \tau_\bullet  := \norm[\bigg]{ \sum\limits_{\nnu \in \rmarg_\bullet}
                                                 \Delta^{\bmf(\nnu)}
                                                  \sum\limits_{\z \in \Colpts_{\indset_\bullet \cup \rmarg_\bullet}}
                                                           u_{0 \z} 
                                                           \LagrBasisHat{\bullet \z}{}
                                                  }{},
\end{equation}
where
$\{\LagrBasisHat{\bullet \z}{}(\y), \z \in \Colpts_{\indset_\bullet \cup \rmarg_\bullet} \}$ is a set of
\rev{multivariate} Lagrange basis functions constructed for the set of collocation points $\Colpts_{\indset_\bullet \cup \rmarg_\bullet}$ and satisfying
$\LagrBasisHat{\bullet \z}{}(\z') = \delta_{\z\z'}$ for any $\z,\, \z' \in \Colpts_{\indset_\bullet \cup \rmarg_\bullet}$.}

\rev{
\begin{remark} \label{remark:saturation}
While the saturation assumption can be empirically justified when numerical approximations exhibit some asymptotic behavior,
the rigorous proofs exist either in the context of deterministic problems with constant coefficients
(see, e.g.,~\cite{dn2002,cgg2016}) or in the context of stochastic Galerkin FEM for parametric PDEs (see~\cite{BachmayrEEV_CAF}).
When required for generic finite element approximations, the saturation assumption may fail
(see~\cite{bek96} for a counterexample in the deterministic setting).
However, while our main result in the present paper (Theorem~\ref{theorem:main}) is proved independently of the saturation assumption~\eqref{eq:saturation},
for the convergence result in Corollary~\ref{cor:main} the saturation assumption is required only for a sequence of
SC-FEM approximations living in nested discrete subspaces $\big( \P_{\indset_\ell}(\G;\X_\ell) \big)_{\ell \in \N_0}$
generated by the adaptive algorithm.
\end{remark}
}

Let us now turn to the associated \rev{spatial and parametric} error indicators  (see~\rev{\cite[\S4.1 and \S4.2]{Bespalov22}}).
For each collocation point $\z \in \Colpts_\bullet$, one can first compute local \rev{(spatial)} two-level error indicators
associated with \rev{new interior vertices created by uniform refinement of $\TT_\bullet$}
(recall that the same mesh $\TT_\bullet$ is assigned to each collocation point):
\begin{equation} \label{eq:2level:local:indicator}
   \mu_{\bullet \z}(\xi) :=
   \frac{\big| (f,\widehat\varphi_{\bullet,\xi})_{L^2(D)} - (a(\cdot,\z) \nabla u_{\bullet \z}, \nabla \widehat\varphi_{\bullet,\xi})_{L^2(D)} \big|}
           {\norm{\widehat\varphi_{\bullet,\xi}}{\X}}\quad
   \text{for all } \xi \in \NN_{\bullet}^+.
\end{equation}
These indicators are then combined to produce the \emph{spatial} error indicator for each $\z \in \Colpts_\bullet$:
\begin{equation} \label{eq:2level:indicator}
   \mu^2_{\bullet \z} := \sum\limits_{\xi \in \NN_{\bullet}^+} \mu^2_{\bullet \z}(\xi).
\end{equation}
Local mesh refinement is effected by using D{\"o}rfler marking on local error indicators~\eqref{eq:2level:local:indicator}
to find a set of marked vertices $\MM_\bullet \subseteq \NN_\bullet^+$.
Then the mesh $\TT_{\circ} := \refine(\TT_\bullet,\MM_\bullet)$ is the refinement of $\TT_\bullet$ such that
$\MM_\bullet \subset \NN_{\circ}$, i.e., all marked vertices of $\widehat\TT_{\bullet}$ are vertices of $\TT_{\circ}$.

\rev{In order to introduce parametric error indicators, we exploit a}
useful property of the reduced margin that for a monotone $\indset_\bullet$ and for any subset of marked indices 
$\text{M}_\bullet \subseteq \rmarg_\bullet$, the index set $\indset_\bullet \cup \text{M}_\bullet$ is also monotone.
Therefore, for each index $\nnu \in \rmarg_\bullet$, a natural parametric error indicator is given by
the norm of the hierarchical surplus associated with the parametric enhancement
as a result of adding $\nnu$ to $\indset_\bullet$:
\begin{equation} \label{eq:param:indicator}
    \tau_{\bullet  \nnu} = \tau_{\bullet  \nnu}[\ssolc] :=
                                       \norm[\bigg]{ \Delta^{\bmf(\nnu)} \sum\limits_{\z \in \Colpts_{\indset_\bullet \cup \rmarg_\bullet}}
                                                                       u_{0\z} \LagrBasisHat{\bullet \z}{}}{}
                                    = \norm[\bigg]{ \Delta^{\bmf(\nnu)}
                                                              \sum\limits_{\mmu \in \indset_\bullet \cup \rmarg_\bullet}
                                                                       \Delta^{\bmf(\mmu)} \ssolc}{},
\end{equation}
where $\ssolc$ is the semidiscrete approximation satisfying~\eqref{eq:semidiscrete} with
$\W$ replaced by $\X_0$.

In addition to $\tau_{\bullet  \nnu}[\ssolc]$, we introduce two other parametric indicators for each $\nnu \,{\in}\, \rmarg_\bullet$:
$\tau_{\bullet  \nnu}[\ssol]$ and $\tau_{\bullet  \nnu}[\ssolh]$.
Here, $\ssol$ (resp., $\ssolh$) is the semidiscrete approximation satisfying~\eqref{eq:semidiscrete} with
$\W$ replaced by $\X_\bullet$ (resp., $\widehat\X_\bullet$).
Specifically, for $w \in \left\{\ssol, \ssolh \right\}$, we define
\begin{equation} \label{eq:param:indicator1} 
    \tau_{\bullet \nnu}[w] := \norm[\bigg]{ \Delta^{\bmf(\nnu)} \sum\limits_{\z \in \Colpts_{\indset_\bullet \cup \rmarg_\bullet}}
                                                                    w_{\bullet \z} \LagrBasisHat{\bullet \z}{}}{}
                                        = \norm[\bigg]{ \Delta^{\bmf(\nnu)} \sum\limits_{\mmu \in \indset_\bullet \cup \rmarg_\bullet}
                                                                    \Delta^{\bmf(\mmu)} w}{},
\end{equation}
where
\[
  w_{\bullet \z} = 
  \begin{cases}
  u_{\bullet\z} & \text{if $w = \ssol$},\\
  \widehat u_{\bullet\z} & \text{if $w = \ssolh$}.
  \end{cases}
\]

\begin{remark} \label{remark:param:indicators}
We emphasize that
in order to steer the adaptive refinement in Algorithm~{\rm \ref{algorithm}} below,
we only use the parametric error indicators $\tau_{\bullet  \nnu} = \tau_{\bullet  \nnu}[\ssolc]$
that are associated with approximations on the coarsest mesh and hence cheap to compute.
The two other parametric indicators, $\tau_{\bullet  \nnu}[\ssol]$ and $\tau_{\bullet  \nnu}[\ssolh]$,
are significantly more expensive to compute.
While these indicators are not part of the adaptive algorithm, they arise as theoretical tools in our analysis
(see Theorem~{\rm \ref{theorem:parametic:alt}} and its application in the proof of the main result in Theorem~{\rm \ref{theorem:main}}).
\end{remark}

We refer to~\cite[section~3]{BS23} for a discussion of computational costs
associated with computing the error estimates $\mu_\bullet$ and $\tau_\bullet$.
The key point is that in practice, the computation of these error estimates is only required to give
a reliable criterion for termination of the adaptive process and, therefore, can be done periodically.
On the other hand, the error indicators $\mu_{\bullet\z}$ and $\tau_{\bullet\nnu}$ are cheaper to compute
and the following inequalities hold (see equations~(31)--(34) and Remark~3 in~\cite{Bespalov22}):
\begin{equation} \label{eq:err:indicators}
   \mu_\bullet \lesssim \sum\limits_{\z \in \Colpts_\bullet} \mu_{\bullet\z}\, \|L_{\bullet\z}\|_{L^p_\pi(\G)}
   \quad\text{and}\quad
   \tau_\bullet \le \sum\limits_{\nnu \in \rmarg_\bullet} \tau_{\bullet\nnu}.
\end{equation}
This motivates the use of the
error indicators in the marking strategy within the adaptive algorithm.

\begin{algorithm} \label{algorithm}
{\bfseries Input:}
$\indset_0 = \{ \1 \}$; $\TT_{0}$.\\
Set the iteration counter $\ell := 0$.
\begin{itemize}
\item[\rm(i)] 

Compute Galerkin approximations $\big\{ u_{\ell \z} \in \X_{\ell}: \z \in \Colpts_{\indset_\ell} \big\}$ 
by solving~\eqref{eq:sample:fem}.
\item[\rm(ii)] 
Compute the spatial error indicators $\big\{ \mu_{\ell\z}: \z \in \Colpts_\ell \big\}$ 
given by~\eqref{eq:2level:indicator}.
\item[\rm(iii)] 

Compute Galerkin approximations $\big\{ u_{0 \z} \in \X_0 : \z \in \Colpts_{\indset_\ell \cup \rmarg_\ell} \setminus \Colpts_{\indset_\ell} \big\}$ 
by solving~\eqref{eq:sample:fem}.
\item[\rm(iv)] 

Compute the parametric error indicators
$\big\{ \tau_{\ell \nnu} : \nnu \in \rmarg_\ell \big\}$
given by~\eqref{eq:param:indicator}.
\item[\rm(v)] 
Use the marking criterion in Algorithm{\rm ~\ref{algorithm_m}}
to determine $\MM_{\ell \z} \subseteq \NN_{\ell}^+$ for all $\z \in \Colpts_\ell$, 
$\markindset_\ell \subseteq \rmarg_\ell$ and, if $\markindset_\ell \not= \emptyset$,
$\nnu^*_\ell \in  \rmarg_\ell \setminus \markindset_\ell$.
\item[\rm(vi)]Set $\TT_{\ell+1} := \refine(\TT_{\ell}, \bigcup_{\z \in \Colpts_\ell}\MM_{\ell \z})$, and 
$\indset_{\ell+1} := \indset_\ell \cup \markindset_\ell \cup \{\nnu^*_\ell\}$.
\item[\rm(vii)] Increase the counter $\ell \mapsto \ell+1$ and goto {\rm(i)}.
\end{itemize}
{\bfseries Output:}
$\big( \TT_\ell,\, \indset_\ell,\, u_{\ell}^{\rm SC},\, \mu_\ell + \tau_\ell \big)_{\ell \in \N_0}$,
where the SC-FEM approximation $u_{\ell}^{\rm SC}$ is computed via~\eqref{eq:scfem:sol} from Galerkin approximations 
$\big\{ u_{{\ell} \z} \in \X_{\ell} : \z \in \Colpts_{\ell} \big\}$ and the error estimates $\mu_{\ell}$ and $\tau_{\ell}$ are given
by~\eqref{eq:spatial:estimate} and~\eqref{eq:param:estimate}, respectively.
\end{algorithm}
The following D{\"o}rfler-type marking strategy is used for step~(v) of Algorithm~\ref{algorithm}.

\begin{algorithm} \label{algorithm_m}
{\bfseries Input:}
error indicators $\big\{ \mu_{\ell \z}(\xi) : \z \in \Colpts_\ell,\ \xi \in \NN_{\ell}^+ \big\}$,
$\big\{ \mu_{\ell \z} : \z \in \Colpts_\ell \big\}$,
and
$\big\{ \tau_{\ell \nnu} : \nnu \in \rmarg_\ell\big\}$;
marking parameters $0 < \theta_\X, \theta_\Colpts \le 1$ and $\vartheta > 0$.
\begin{itemize}
\item[$\bullet$]
If \
$\sum_{\z \in \Colpts_\ell} \mu_{\ell \z} \norm{L_{\ell \z}}{L^p_{\pi}(\G)}
  \ge \vartheta \sum_{\nnu \in \rmarg_\ell} \tau_{\ell \nnu}$,
then proceed as follows (spatial refinement):
\begin{itemize}
\item[$\circ$]
set $\markindset_\ell := \emptyset$;
\item[$\circ$]
for each $\z \in \Colpts_\ell$,
determine $\MM_{\ell \z} \subseteq \NN_{\ell}^+$ of minimal cardinality such that
\begin{equation} \label{eq:doerfler:separate1}
 \theta_\X \, \mu_{\ell \z}^2 
 \le \sum_{\xi \in \MM_{\ell \z}} \mu^2_{\ell \z}(\xi).
\end{equation}
\end{itemize}
\item[$\bullet$]
Otherwise,
proceed as follows (parametric enrichment):
\begin{itemize}
\item[$\circ$]
set $\MM_{\ell \z} := \emptyset$ for all $\z \in \Colpts_\ell$;
\item[$\circ$]
determine the set $\markindset_\ell \subseteq \rmarg_\ell$ of minimal cardinality such that
\begin{subequations} \label{eq:param:marking}
\begin{equation} \label{ineq:dorfler}
 \theta_\Colpts \, \sum_{\nnu \in \rmarg_\ell} \tau_{\ell \nnu} \le
 \sum_{\nnu \in \markindset_{\ell}} \tau_{\ell \nnu};
\end{equation}

\item[$\circ$] 
determine $\nnu^*_{\ell} \in \rmarg_\ell \setminus  \markindset_{\ell}$ such that
\begin{equation} \label{eq:index:min}
    \nnu^*_{\ell} = \arg \min\limits_{\nnu \in \rmarg_\ell \setminus \markindset_\ell} \|\nnu\|_1;
\end{equation}
if there are several $\nnu^*_\ell \in \rmarg_\ell \setminus \markindset_\ell$ satisfying~\eqref{eq:index:min},
then choose the one that comes first in lexicographic ordering.

\end{subequations}
\end{itemize}
\end{itemize}
{\bfseries Output:}
$\MM_{\ell \z} \subseteq \NN_{\ell}^+$ for all $\z \in \Colpts_\ell$,
$\markindset_\ell \subseteq \rmarg_\ell$ and, if $\markindset_\ell \not= \emptyset$,
$\nnu^*_{\ell} \in \rmarg_\ell \setminus  \markindset_{\ell}$.
\end{algorithm}

\begin{remark} \label{remark:param:marking}
In the above marking strategy, parametric D{\"o}rfler marking is complemented by adding a multi-index
$\nnu^*_\ell \in \rmarg_\ell \setminus  \markindset_{\ell}$ of the smallest magnitude (in the sense of the $1$-norm).
In practice, using only the D{\"o}rfler marking criterion given by~\eqref{ineq:dorfler} tends to be sufficient for the adaptive algorithm
to generate converging SC-FEM approximations for representative test problems
(see~\cite[section~5]{Bespalov22}). % and section~{\rm \ref{sec:numerics}} below).
However, in a general case of the parametric elliptic PDE given by~\eqref{eq:pde:strong},
adding a multi-index $\nnu^*_\ell$ satisfying~\eqref{eq:index:min} is required in our analysis
to guarantee convergence of adaptive SC-FEM~approximations.
\end{remark}

%-----------------------------------------------------------------------------------------
\section{Convergence of parametric error estimates} \label{sec:param:convergence}

The goal of this section is to show that 
$\lim\limits_{k \to \infty} \tau_{\ell_k} = \lim\limits_{k \to \infty} \sum\limits_{\nnu \in \rmarg_{\ell_k}} \tau_{\ell_k  \nnu} = 0$
along the subsequence $\left(\ell_k\right)_{k \in \N_0}$ of iterations where parametric enrichments occur in Algorithm~\ref{algorithm}.
We follow the idea that was used in~\cite{eest22} in order to prove convergence
of the adaptive algorithm proposed in~\cite{GerstnerGriebel2003}.
We start by collecting some auxiliary results.
The following lemma establishes a useful property of hierarchical surplus operators.

\begin{lemma}[{\cite[Theorem~2.3]{FeischlS21}}] \label{lemma:interpolators}
    Let $\nnu, \mmu \in \N^M$ be two multi-indices such that $\nu_m<\mu_m$ for some $m \in \{1,2,\ldots,M\}$.
    Then
    $\Delta^{\bmf(\nnu)} \Delta^{\bmf(\mmu)} v(\y) \equiv 0$
    for any $v \in C^0(\Gamma; \X)$.    
\end{lemma}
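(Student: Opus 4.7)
The plan is to exploit the tensor-product structure of the hierarchical surplus operators to reduce the statement to a one-dimensional claim, and then to deduce that 1D claim from the nestedness of the underlying node sets.

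First, I would observe that since $\Delta^{\bmf(\nnu)} = \bigotimes_{k=1}^M \Delta_k^{\mf(\nu_k)}$ is a tensor product of operators each acting on a single coordinate, the operators in different coordinates commute, so
\[
   \Delta^{\bmf(\nnu)} \, \Delta^{\bmf(\mmu)} \;=\; \bigotimes_{k=1}^M \bigl( \Delta_k^{\mf(\nu_k)} \Delta_k^{\mf(\mu_k)} \bigr).
\]
Therefore it suffices to prove that the factor corresponding to the distinguished coordinate vanishes, i.e., that
$\Delta_m^{\mf(\nu_m)} \Delta_m^{\mf(\mu_m)} \equiv 0$ whenever $\nu_m < \mu_m$.

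Next, I would establish the key auxiliary fact about 1D Lagrange operators: if $\Colpts_m^{\mf(a)} \subseteq \Colpts_m^{\mf(b)}$ (which holds whenever $a \le b$, by nestedness of the 1D node families), then
\[
   \Lagr_m^{\mf(a)} \circ \Lagr_m^{\mf(b)} \;=\; \Lagr_m^{\mf(a)}.
\]
The argument is that $\Lagr_m^{\mf(b)} v$ is a polynomial that agrees with $v$ at all nodes in $\Colpts_m^{\mf(b)}$ and, in particular, at all nodes in $\Colpts_m^{\mf(a)}$; interpolating it at the $\mf(a)$ nodes in $\Colpts_m^{\mf(a)}$ therefore produces the same polynomial of degree at most $\mf(a)-1$ that one obtains by interpolating $v$ directly at those nodes.

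Finally, assuming $\nu_m < \mu_m$ (so that $\nu_m \le \mu_m - 1 < \mu_m$ and hence all four pairs of indices below are in the ``$\le$'' relation), I would expand
\[
   \Delta_m^{\mf(\nu_m)} \Delta_m^{\mf(\mu_m)} = \bigl(\Lagr_m^{\mf(\nu_m)} - \Lagr_m^{\mf(\nu_m - 1)}\bigr)\bigl(\Lagr_m^{\mf(\mu_m)} - \Lagr_m^{\mf(\mu_m - 1)}\bigr)
\]
into four composition terms and apply the auxiliary identity to each. The first two terms collapse to $\Lagr_m^{\mf(\nu_m)}$ and the last two to $\Lagr_m^{\mf(\nu_m - 1)}$, and the signs are arranged so that everything cancels.

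The main obstacle, and the only part requiring care, is the auxiliary composition identity for 1D Lagrange operators; once that is in hand, the proof is a short algebraic manipulation. Mild bookkeeping is also needed at the boundary $\nu_m = 1$ (where $\Lagr_m^{\mf(0)} = \Lagr_m^{0} = 0$ by convention), but the cancellation still works because the corresponding terms are simply absent from the expansion.
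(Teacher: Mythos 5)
Your argument is correct. Note that the paper does not prove this lemma at all --- it is quoted verbatim from \cite[Theorem~2.3]{FeischlS21} --- so there is no in-paper proof to compare against; what you have written is the standard argument (and essentially the one in the cited reference): factor the composition through the tensor-product structure, reduce to the one-dimensional identity $\Delta_m^{\mf(\nu_m)}\Delta_m^{\mf(\mu_m)}\equiv 0$ for $\nu_m<\mu_m$, and derive that from the composition rule $\Lagr_m^{\mf(a)}\circ \Lagr_m^{\mf(b)}=\Lagr_m^{\mf(a)}$ for $a\le b$, which is exactly where the nestedness of the 1D node families and the monotonicity of $\mf$ enter. Your handling of the boundary case $\nu_m=1$ via the convention $\Lagr_m^{0}=0$ is also right, and the case $\mu_m=1$ cannot occur since $\nu_m\ge 1$ forces $\mu_m\ge 2$.
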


Next, we formulate the following abstract result for $l^p$-sequences.
This result was originally proved for $p=2$ in~\cite[Lemma~15]{bprr18}.
However, the proof is easy to generalize to the case of arbitrary $p \in [1, \infty)$; cf.~\cite[Lemma~2.5]{eest22}.

%%%%%%%%%%%%%%%%%%%%
\begin{lemma}[{\cite[Lemma~15]{bprr18}}] \label{lemma:parametric}
Let $(x_n)_{n\in\N} \subset \R_{\ge0}$ and $(x_n^{(\ell)})_{n\in\N} \subset \R_{\ge0}$ with $\ell \in \N_0$.
Let $p \in [1, \infty)$ and assume that $(x_n)_{n\in\N} \in l^p(\N)$ and
$\norm{x_n - x_n^{(\ell)}}{l^p(\N)} \to 0$ as $\ell \to \infty$.
In addition, 
let $g : \R_{\ge0} \to \R_{\ge0}$ be a continuous function with $g(0) = 0$ and
assume that there exists a sequence $(\PP_\ell)_{\ell \in \N_0}$ of nested subsets of $\N$
(i.e., $\PP_\ell \subseteq \PP_{\ell+1}$ for all $\ell \in \N_0$) satisfying the following property:
\begin{equation} \label{eq:lemma:parametric}
   x_m^{(\ell)} \le g \Bigg( \sum_{n \in \PP_{\ell+1} \setminus \PP_\ell} (x_n^{(\ell)})^p \Bigg)\quad
   \text{for all } \ell \in \N_0 \text{ and } m \in \N \setminus \PP_{\ell+1}.
\end{equation}
Then
$\sum_{n \in \N \setminus \PP_\ell} x_n^p \to 0$ as $\ell \to \infty$.
\end{lemma}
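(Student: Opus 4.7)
The plan is three steps. \emph{First}, I would establish that
\[
   \eps_\ell := g\Bigl( \sum_{n \in \PP_{\ell+1} \setminus \PP_\ell} (x_n^{(\ell)})^p \Bigr) \longrightarrow 0
   \quad \text{as } \ell \to \infty.
\]
The key observation is that because $(\PP_\ell)$ is nested, the family $(\PP_{\ell+1} \setminus \PP_\ell)_{\ell \in \N_0}$ consists of pairwise disjoint subsets of $\N$, so
\[
   \sum_{\ell=0}^{\infty} \sum_{n \in \PP_{\ell+1} \setminus \PP_\ell} x_n^p
   \le \sum_{n \in \N} x_n^p < \infty,
\]
which forces the general term $\sum_{n \in \PP_{\ell+1} \setminus \PP_\ell} x_n^p$ to tend to zero. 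Combining this with the assumed convergence $\norm{x_n - x_n^{(\ell)}}{l^p(\N)} \to 0$ via Minkowski's inequality on the restriction to $\PP_{\ell+1} \setminus \PP_\ell$ yields $\sum_{n \in \PP_{\ell+1} \setminus \PP_\ell} (x_n^{(\ell)})^p \to 0$. Continuity of $g$ at $0$ together with $g(0) = 0$ then gives $\eps_\ell \to 0$.

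\emph{Second}, I would deduce that every index outside $\PP_\infty := \bigcup_{\ell \in \N_0} \PP_\ell$ satisfies $x_n = 0$. Indeed, if $n \notin \PP_\infty$, then $n \in \N \setminus \PP_{\ell+1}$ for every $\ell \in \N_0$, so assumption~\eqref{eq:lemma:parametric} yields $x_n^{(\ell)} \le \eps_\ell$, and Step~1 forces the right-hand side to vanish. Since $l^p$-convergence implies coordinatewise convergence ($|x_n^{(\ell)} - x_n| \le \norm{x - x^{(\ell)}}{l^p(\N)}$), we conclude $x_n = 0$.

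\emph{Third}, I would close by a tail argument: Step~2 yields
\[
   \sum_{n \in \N \setminus \PP_\ell} x_n^p \;=\; \sum_{n \in \PP_\infty \setminus \PP_\ell} x_n^p,
\]
which is the tail beyond $\PP_\ell$ of the convergent series $\sum_{n \in \PP_\infty} x_n^p$. Since $\PP_\ell \uparrow \PP_\infty$ as $\ell \to \infty$, this tail tends to zero, completing the proof.

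The main obstacle I anticipate is Step~1. The set $\PP_{\ell+1} \setminus \PP_\ell$ is not assumed finite, so the triangle-inequality estimate
\[
   \Bigl( \sum_{n \in \PP_{\ell+1} \setminus \PP_\ell} (x_n^{(\ell)})^p \Bigr)^{1/p}
   \le \Bigl( \sum_{n \in \PP_{\ell+1} \setminus \PP_\ell} (x_n^{(\ell)} - x_n)^p \Bigr)^{1/p}
     + \Bigl( \sum_{n \in \PP_{\ell+1} \setminus \PP_\ell} x_n^p \Bigr)^{1/p}
\]
invokes Minkowski's inequality for countable sums. This is routine for $p \in [1,\infty)$, but it is precisely the place where the $p=2$ proof of~\cite[Lemma~15]{bprr18} (resting on Hilbert-space orthogonality) has to be recast into a genuine normed-space argument; everything else is bookkeeping built on the disjointness of $(\PP_{\ell+1} \setminus \PP_\ell)_{\ell}$ and the continuity of $g$ at the origin.
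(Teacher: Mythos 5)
Your proof is correct, and it follows essentially the same route as the source the paper cites for this lemma (the paper itself gives no proof, only the reference to \cite[Lemma~15]{bprr18} and \cite[Lemma~2.5]{eest22}): split $\N\setminus\PP_\ell$ into $\N\setminus\PP_\infty$, where assumption~\eqref{eq:lemma:parametric} together with the vanishing of $g$ at $0$ forces $x_n=0$, and the tail $\PP_\infty\setminus\PP_\ell$ of a convergent series. The Minkowski step you flag as the main obstacle is indeed all that is needed to pass from $p=2$ to general $p\in[1,\infty)$, exactly as the paper asserts.
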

%%%%%%%%%%%%%%%%%%%%

Now, let $\indset_{\infty} := \cup_{\ell \in \N_0} \indset_{\ell}$ and $\rmarg_\infty := \rmarg(\indset_\infty)$.
For each $\ell \in \N_0 \cup \{\infty\}$, let us consider the following sequence:
\begin{equation} \label{eq:sequences}
    \widehat\tau_\ell := \big(\widehat\tau_{\ell \nnu}\big)_{\nnu \in \N^M}\quad \text{with \ }
    \widehat\tau_{\ell\nnu} = 
     \begin{cases}
        \tau_{\ell\nnu},
        & \nnu \in \indset_{\ell} \cup \rmarg_{\ell},\\
        0,
        & \nnu \in \N^M \setminus (\indset_{\ell} \cup \rmarg_{\ell}),
    \end{cases}
\end{equation}
where $\tau_{\ell\nnu}$ are defined according to~\eqref{eq:param:indicator}
for $\nnu \in \rmarg_\ell$ as well as for $\nnu \in \indset_\ell$.

\begin{lemma} \label{lemma:sequences}
For any $\ell \in \N_0$, the sequence $\widehat\tau_\ell$ is a subsequence of $\widehat\tau_\infty$.
\end{lemma}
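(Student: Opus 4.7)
The plan is to establish two facts from which the claim follows immediately: that the support of $\widehat\tau_\ell$, namely $\indset_\ell \cup \rmarg_\ell$, is contained in the support $\indset_\infty \cup \rmarg_\infty$ of $\widehat\tau_\infty$; and that the two sequences coincide pointwise on this common support. Together these say that $\widehat\tau_\ell$ arises from $\widehat\tau_\infty$ by setting some entries to zero (equivalently, by restricting to a subset of indices in any enumeration of $\N^M$), which is exactly the subsequence claim.

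For the support containment I would observe $\indset_\ell \subseteq \indset_\infty$ directly from $\indset_\infty = \bigcup_{k \ge 0} \indset_k$. For $\nnu \in \rmarg_\ell$ I would split into cases: either $\nnu$ is eventually added to some $\indset_k$ with $k > \ell$, so $\nnu \in \indset_\infty$; or $\nnu \notin \indset_\infty$, in which case $\nnu - \eeps_m \in \indset_\ell \subseteq \indset_\infty$ for every $m$ with $\nu_m > 1$ places $\nnu$ in $\rmarg_\infty$ via~\eqref{eq:reduced:margin}.

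For the pointwise equality, I would fix $\nnu \in \indset_\ell \cup \rmarg_\ell$ and compare $\tau_{\ell\nnu}$ with $\tau_{\infty\nnu}$ using the second representation in~\eqref{eq:param:indicator}. The difference inside the two norms reduces to
\[
   \sum_{\mmu \in (\indset_\infty \cup \rmarg_\infty) \setminus (\indset_\ell \cup \rmarg_\ell)}
   \Delta^{\bmf(\nnu)} \Delta^{\bmf(\mmu)} \ssolc,
\]
and Lemma~\ref{lemma:interpolators} kills every summand as soon as one verifies that for each such $\mmu$ there exists a coordinate $m$ with $\nu_m < \mu_m$.

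The only nontrivial step is this last verification, which I would phrase as a downward-closure property of $\indset_\ell \cup \rmarg_\ell$: whenever $\nnu \in \indset_\ell \cup \rmarg_\ell$ and $\1 \le \mmu \le \nnu$ componentwise, then $\mmu \in \indset_\ell \cup \rmarg_\ell$. Its contrapositive applied to $\mmu \notin \indset_\ell \cup \rmarg_\ell$ delivers the coordinate needed to invoke Lemma~\ref{lemma:interpolators}. The property itself is proved by cases on $\nnu$: if $\nnu \in \indset_\ell$, then monotonicity of $\indset_\ell$ permits walking from $\nnu$ to $\mmu$ by successive subtractions of unit vectors while staying inside $\indset_\ell$; if $\nnu \in \rmarg_\ell \setminus \indset_\ell$ and $\mmu \neq \nnu$, then on any coordinate $m^*$ where they strictly differ one has $\nu_{m^*} \ge 2$, hence $\nnu - \eeps_{m^*} \in \indset_\ell$ and $\mmu \le \nnu - \eeps_{m^*}$, reducing to the previous case; the remaining possibility $\mmu = \nnu$ is trivial. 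This downward-closure argument is where the structural hypotheses on $\indset_\ell$ and $\rmarg_\ell$ do all the work, and is the main obstacle in the proof.
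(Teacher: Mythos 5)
Your proof is correct and takes essentially the same route as the paper's: establish $\indset_\ell \cup \rmarg_\ell \subseteq \indset_\infty \cup \rmarg_\infty$, split the sum defining $\tau_{\infty\nnu}$ over $\indset_\ell \cup \rmarg_\ell$ and its complement, and invoke Lemma~\ref{lemma:interpolators} to annihilate the extra terms. The only difference is that the paper leaves the justification for applying Lemma~\ref{lemma:interpolators} implicit (``using the definition of the reduced margin''), whereas you make it explicit via the downward-closure property of $\indset_\ell \cup \rmarg_\ell$ --- a worthwhile addition, and correctly argued.
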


\begin{proof}
Let $\ell \in \N_0$ and consider a multi-index
$\nnu \in \big(\indset_\ell \cup \rmarg_\ell\big) \subset \big(\indset_\infty \cup \rmarg_\infty \big)$.
Using the definition of the reduced margin in~\eqref{eq:reduced:margin} and applying Lemma~\ref{lemma:interpolators},
we obtain
\begin{align*}
   \tau_{\infty\nnu} \reff{eq:param:indicator}=
   \norm[\bigg]{\Delta^{\bmf(\nnu)} \sum\limits_{\mmu \in \indset_\infty \cup \rmarg_\infty} \Delta^{\bmf(\mmu)} \ssolc}{}
   & =
   \norm[\bigg]{\Delta^{\bmf(\nnu)}
   \bigg( \sum\limits_{\mmu \in \indset_\ell \cup \rmarg_\ell} +
             \sum\limits_{\mmu \in (\indset_\infty \cup \rmarg_\infty) \setminus (\indset_\ell \cup \rmarg_\ell)} \bigg)
    \Delta^{\bmf(\mmu)} \ssolc}{}\\[4pt]
   & =
   \norm[\bigg]{\Delta^{\bmf(\nnu)} \sum\limits_{\mmu \in \indset_\ell \cup \rmarg_\ell} \Delta^{\bmf(\mmu)} \ssolc}{}
   \reff{eq:param:indicator} = \tau_{\ell\nnu}.
\end{align*}
\noindent
This proves the statement of the lemma.
\end{proof}

Now, we are ready to establish convergence of parametric error estimates
along the subsequence of iterations for which parametric enrichment takes place.

\begin{theorem} \label{theorem:parametric}
Suppose that the Taylor coefficients $[\ssolc]_{\ii}$, $\ii \in \N_0^M$, defined by~\eqref{eq:Taylor:coeff}
with $w =\ssolc$ satisfy the summability property~\eqref{eq:summability:property}. 
Let $(\ell_k)_{k \in \N_0} \subset \N_0$ denote the subsequence of iterations
where parametric enrichment occurs in Algorithm~{\rm \ref{algorithm}}
and assume that $\ell_k\, {\xrightarrow{k \to \infty}}\, \infty$.
Then
\rev{the subsequences 
\[
   \bigg( \sum\limits_{\nnu \in \rmarg_{\ell_k}} \tau_{\ell_k\nnu} \bigg)_{k \in \N_0}
   \quad\text{and}\quad
   \big( \tau_{\ell_k} \big)_{k \in \N_0}
\]
converge to zero.}
\end{theorem}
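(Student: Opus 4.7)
The plan is to apply the abstract Lemma~\ref{lemma:parametric} to the sequences $\widehat\tau_\ell$ defined in~\eqref{eq:sequences}, with the choices $p = 1$, $g(t) = \theta_\Colpts^{-1}\, t$, the ambient index set $\N = \N^M$, and the nested subsets $\PP_k := \indset_{\ell_k}$ (reindexed along the parametric-enrichment subsequence). The roles of $(x_n)$ and $(x_n^{(\ell)})$ are played by $\widehat\tau_\infty$ and $\widehat\tau_{\ell_k}$, respectively. Given the conclusion of Lemma~\ref{lemma:parametric}, Lemma~\ref{lemma:sequences} immediately yields
\[
   \sum_{\nnu \in \rmarg_{\ell_k}} \tau_{\ell_k \nnu}
   = \sum_{\nnu \in \rmarg_{\ell_k}} \widehat\tau_{\infty\nnu}
   \le \sum_{\nnu \in \N^M \setminus \indset_{\ell_k}} \widehat\tau_{\infty\nnu}
   \xrightarrow{k\to\infty} 0,
\]
and the claim $\tau_{\ell_k} \to 0$ follows from the second inequality in~\eqref{eq:err:indicators}.

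The first hypothesis of the abstract lemma is that $\widehat\tau_\infty \in l^1(\N^M)$. To verify it, I will plug the Taylor expansion $\ssolc(\cdot,\y) = \sum_{\ii \in \N_0^M} [\ssolc]_\ii \, P_\ii(\y)$ into the definition~\eqref{eq:param:indicator} of $\tau_{\infty\nnu}$, interchange the finite sum over $\mmu \in \indset_\infty \cup \rmarg_\infty$ with the series in~$\ii$ by linearity and stability of the sparse-grid operator, and then estimate the action of the hierarchical surplus $\Delta^{\bmf(\nnu)}$ on the scalar monomial $P_\ii$ at the collocation nodes by $1$D arguments (this is where the specific structure of Leja or Clenshaw--Curtis nodes, and their Lebesgue constants, enters, as in the analysis of~\cite{eest22}). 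Combined with the summability property~\eqref{eq:summability:property} and a Cauchy--Schwarz argument over $\ii$, this produces a bound of the type $\tau_{\infty\nnu} \lesssim \rho^{-\nnu}$ (up to algebraic factors in $\nnu$) that is $l^1$-summable in $\nnu$ since $\rro > \1$.

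The second hypothesis, $\norm{\widehat\tau_\infty - \widehat\tau_{\ell_k}}{l^1(\N^M)} \to 0$, is essentially free: by Lemma~\ref{lemma:sequences}, the two sequences coincide on $\indset_{\ell_k} \cup \rmarg_{\ell_k}$ and $\widehat\tau_{\ell_k \nnu} = 0$ elsewhere, so
\[
   \norm{\widehat\tau_\infty - \widehat\tau_{\ell_k}}{l^1(\N^M)}
   \;=\; \sum_{\nnu \in \N^M \setminus (\indset_{\ell_k} \cup \rmarg_{\ell_k})} \widehat\tau_{\infty\nnu},
\]
which tends to $0$ because $\indset_{\ell_k} \cup \rmarg_{\ell_k}$ exhausts $\indset_\infty \cup \rmarg_\infty$ (the support of $\widehat\tau_\infty$) and $\widehat\tau_\infty \in l^1$.

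The third hypothesis~\eqref{eq:lemma:parametric} is where the marking strategy of Algorithm~\ref{algorithm_m} enters. For $\nnu \in \N^M \setminus \PP_{k+1} = \N^M \setminus \indset_{\ell_{k+1}}$, either $\nnu \notin \indset_{\ell_k} \cup \rmarg_{\ell_k}$, in which case $\widehat\tau_{\ell_k\nnu}=0$ and the inequality is trivial, or $\nnu \in \rmarg_{\ell_k} \setminus (\markindset_{\ell_k} \cup \{\nnu^*_{\ell_k}\})$. In the latter case the Dörfler criterion~\eqref{ineq:dorfler} (active at iteration $\ell_k$ by definition of the subsequence) gives
\[
   \widehat\tau_{\ell_k\nnu} \;\le\; \sum_{\mmu \in \rmarg_{\ell_k}} \tau_{\ell_k\mmu}
   \;\le\; \theta_\Colpts^{-1} \sum_{\mmu \in \markindset_{\ell_k}} \tau_{\ell_k\mmu}
   \;\le\; \theta_\Colpts^{-1} \sum_{\mmu \in \PP_{k+1} \setminus \PP_k} \widehat\tau_{\ell_k\mmu},
\]
since $\PP_{k+1} \setminus \PP_k = \markindset_{\ell_k} \cup \{\nnu^*_{\ell_k}\} \supseteq \markindset_{\ell_k}$. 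This is~\eqref{eq:lemma:parametric} with $g(t) = \theta_\Colpts^{-1}t$. The main obstacle in the proof is establishing the $l^1$-summability of $\widehat\tau_\infty$, since transferring the weighted $l^2$-summability of Taylor coefficients into summability of hierarchical surpluses on an arbitrary monotone sparse grid requires the node-specific $1$D interpolation estimates alluded to above; all other steps are essentially bookkeeping.
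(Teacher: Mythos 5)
Your proposal is correct and follows essentially the same route as the paper's proof: the same three ingredients (the bound $\tau_{\infty\nnu}\lesssim \leb(\nnu)\,\rro^{-\nnu}$ via the Taylor expansion, Cauchy--Schwarz and the summability property; the $l^1$-convergence $\|\widehat\tau_\infty-\widehat\tau_{\ell_k}\|_{l^1(\N^M)}\to 0$; and the application of Lemma~\ref{lemma:parametric} with $p=1$ and the D\"orfler criterion supplying hypothesis~\eqref{eq:lemma:parametric}). The only deviations are cosmetic: you take $g(t)=\theta_\Colpts^{-1}t$ instead of the paper's slightly sharper $g(s)=\frac{1-\theta_\Colpts}{\theta_\Colpts}s$ (both satisfy the lemma's requirements), and you streamline the final assembly by noting that Lemma~\ref{lemma:sequences} gives $\tau_{\ell_k\nnu}=\widehat\tau_{\infty\nnu}$ exactly on $\rmarg_{\ell_k}$, bypassing the paper's triangle-inequality splitting.
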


\begin{proof}

We omit the subscript $k$ to simplify notation in the proof and assume that $\ell = \ell_k {\xrightarrow{k \to \infty}}\, \infty$.
Using the sequences introduced in~\eqref{eq:sequences} and the triangle inequality, we estimate

\begin{equation} \label{eq:err:sequence}
    \tau_{\ell} \reff{eq:err:indicators}{\le}
    \sum\limits_{\nnu \in \rmarg_\ell} \tau_{\ell\nnu} \le
    \sum\limits_{\nnu \in \rmarg_\ell} \widehat\tau_{\infty\nnu} +
    \sum\limits_{\nnu \in \rmarg_\ell} | \widehat\tau_{\ell\nnu} - \widehat\tau_{\infty\nnu} | \le
    \sum\limits_{\nnu \in \rmarg_\ell} \widehat\tau_{\infty\nnu} + \norm[\big]{\widehat\tau_{\infty} - \widehat\tau_{\ell}}{l^1(\N^M)}.
\end{equation}
We will complete the proof by showing that each term on the right-hand side of~\eqref{eq:err:sequence} converges to zero as $\ell \to \infty$.
We will do this in three steps.

{\bf Step~1.}
First, we show that $\widehat\tau_\ell \in l^1(\N^M)$ for any $\ell \in \N_0 \cup \{\infty\}$.
Let $\ell \in \N_0$.
For any $\nnu \in \indset_\ell \cup \rmarg_\ell$ we have
\begin{align*} %\label{eq:param:start}
    \tau_{\ell \nnu}
    & \reff{eq:param:indicator}=
    \norm[\bigg]{\Delta^{\bmf(\nnu)} \sum\limits_{\mmu \in \indset_\ell \cup \rmarg_\ell} \Delta^{\bmf(\mmu)} \ssolc}{}
    \\[4pt]
    & \refp{eq:param:indicator}=
    \norm[\bigg]{\Delta^{\bmf(\nnu)}
                         \bigg( \sum\limits_{\mmu \in \indset_\ell \cup \rmarg_\ell} +
                                    \sum\limits_{\mmu \in \N^M \setminus (\indset_\ell \cup \rmarg_\ell)}
                          \bigg)
                         \Delta^{\bmf(\mmu)} \ssolc}{}  =
    \norm[\bigg]{\Delta^{\bmf(\nnu)} \sum\limits_{\mmu \in \N^M} \Delta^{\bmf(\mmu)} \ssolc}{}
    \\[4pt]
    & \reff{eq:poly:representation}=
    \norm[\bigg]{\Delta^{\bmf(\nnu)} \sum\limits_{\ii \in \N_0^M} [\ssolc]_{\ii} P_{\ii}}{} =
    \norm[\bigg]{\sum\limits_{\ii \in \N_0^M} [\ssolc]_{\ii} \, \Delta^{\bmf(\nnu)} P_{\ii} }{},
\end{align*}
where we used Lemma~\ref{lemma:interpolators} in the second equality.
Hence, applying the triangle inequality and then the Cauchy-Schwarz inequality, we obtain
\begin{align} \label{eq:rho-estimate1}
    \tau_{\ell \nnu}
    \le
    \bigg( \sum\limits_{\ii \in \N_0^M} \rro^{2\ii} \normm[\big]{[\ssolc]_{\ii}}{\X} \bigg)^{1/2}\,
    \bigg( \sum\limits_{\ii \in \N_0^M} \rro^{-2\ii} \normm[\big]{\Delta^{\bmf(\nnu)} P_{\ii}}{L_\pi^p(\G)} \bigg)^{1/2}.
\end{align}
The summability property~\eqref{eq:summability:property} for the Taylor coefficients $[\ssolc]_{\ii}$ implies
\begin{equation} \label{eq:const:1}
   \bigg( \sum\limits_{\ii \in \N_0^M} \rro^{2\ii} \normm[\big]{[\ssolc]_{\ii}}{\X} \bigg)^{1/2} =: C_1 < \infty. 
\end{equation}  
Now, let us consider the second factor on the right-hand side of~\eqref{eq:rho-estimate1}.
Firstly, introducing the Lebesgue constant $\leb(\nnu)$ of the hierarchical surplus operator $\Delta^{\bmf(\nnu)}$
(with respect to the $L_\pi^{\infty}(\G)$-norm) and using the fact that $\G = [-1, 1]^M$, we estimate
\begin{align} \label{eq:L^p-norm:estimate}
    \norm[\big]{\Delta^{\bmf(\nnu)} P_{\ii}}{L_\pi^{p}(\G)} 
    \le
    \norm[\big]{\Delta^{\bmf(\nnu)} P_{\ii}}{L_\pi^{\infty}(\G)}
    \le
    \leb(\nnu) \norm[\big]{ P_{\ii}}{L_\pi^{\infty}(\G)} =
    \leb(\nnu) \cdot \max\limits_{\y \in \G} |\y^{\ii}| =
    \leb(\nnu).
\end{align}
Secondly, since $\Lagr_m^{\mf(\nu_m)} g = g$ for any (univariate) polynomial $g$
of degree $\le \mf(\nu_m)-1$, we find~that
\begin{equation*} %\label{eq:exact:interp}
    \Delta^{\bmf(\nnu)} P_{\ii}(\y) = \prod_{m=1}^M \Delta_m^{\mf(\nu_m)} P_{i_m}(y_m) =
    \prod_{m=1}^M \Big( \Lagr_m^{\mf(\nu_m)} - \Lagr_m^{\mf(\nu_m-1)} \Big) P_{i_m}(y_m) \equiv 0,
\end{equation*}
provided that $i_m \le \max\,\{0, \mf(\nu_m-1) - 1\}$ for at least one $m \in \{1,\ldots,M\}$.
Consequently,  $\Delta^{\bmf(\nnu)} P_{\ii}(\y) \not\equiv 0 $ for
$\ii \ge \bmf(\nnu - \boldsymbol{1}) \ge \nnu - \boldsymbol{1}$.
Thus, the second sum on the right-hand side of~\eqref{eq:rho-estimate1} can be estimated as follows:
\begin{align} \label{eq:rho-estimate2}
    \sum\limits_{\ii \in \N_0^M} \rro^{-2\ii} \normm[\big]{\Delta^{\bmf(\nnu)} P_{\ii}}{L_\pi^p(\G)}
    & \refp{eq:L^p-norm:estimate}=
    \sum\limits_{\ii \ge \nnu - \boldsymbol{1}} \rro^{-2\ii} \normm[\big]{\Delta^{\bmf(\nnu)} P_{\ii}}{L_\pi^{p}(\G)}
    \nonumber
    \\[4pt]
    & \reff{eq:L^p-norm:estimate}\le
    \leb^2(\nnu) \sum\limits_{\ii \ge \nnu - \boldsymbol{1}} \rro^{-2\ii} =
    \leb^2(\nnu) \, C^2_{\rro} \, \rro^{-2\nnu},
\end{align}
where at the last step we used a finite product of geometric series to calculate the infinite sum over multi-indices as follows:
\begin{equation*} %\label{eq:geom:sum}
     \sum\limits_{\ii \ge \nnu - \boldsymbol{1}} \rro^{-2\ii} =
     \prod_{m=1}^M \sum\limits_{i_m \ge \nu_m - 1} \ro_m^{-2i_m}
     = \prod_{m=1}^M \frac{\ro_m^{-2(\nu_m-1)}}{1-\ro_m^{-2}}
     = C^2_{\rro} \, \rro^{-2\nnu}\ \ \text{with $C^2_{\rro} := \prod_{m=1}^M \frac{\ro_m^2}{1-\ro_m^{-2}}$}.
\end{equation*}
Now, combining~\eqref{eq:rho-estimate1},~\eqref{eq:const:1}, and~\eqref{eq:rho-estimate2}, we~obtain
\begin{equation} \label{eq:tau:est:final}
    \tau_{\ell \nnu} \le
    C_1 \, C_{\rro} \, \leb(\nnu) \, \rro^{-\nnu} \lesssim  \leb(\nnu) \, \rro^{-\nnu}\quad
    \forall\,\nnu \in \indset_\ell \cup \rmarg_\ell.
\end{equation}
The Lebesgue constant of the hierarchical surplus operator can be estimated as follows:
\begin{equation} \label{eq:lebesgue}
    \leb(\nnu) \lesssim
    \begin{cases}
        \prod_{m=1}^M \nu_m, &\text{for CC points,}
        \\[4pt]
        \prod_{m=1}^M \nu^2_m \log{\nu_m}, & \text{for Leja points};
    \end{cases}
\end{equation}
for CC %Clenshaw--Curtis
points, we refer to~\cite[Section~2.1]{FeischlS21} and
for Leja points the bound follows from~\cite[Theorem~3.1]{Chk13} using the idea in~\cite[Section~2.1]{FeischlS21}.
Hence, $\left(\leb(\nnu) \rro^{-\nnu} \right)_{\nnu \in \N^M} \in l^1(\N^M)$
due to the integral convergence test for series.
Therefore, recalling the definition of $\widehat\tau_{\ell\nnu}$ in~\eqref{eq:sequences},
we conclude from~\eqref{eq:tau:est:final} that $\widehat\tau_\ell \in l^1(\N^M)$ for any $\ell \in \N_0$.
Due to Lemma~\ref{lemma:sequences}, each element of $\widehat\tau_\infty$ can be bounded in the same way, i.e.,
$\tau_{\infty \nnu} \lesssim  \leb(\nnu) \, \rro^{-\nnu}$ for all $\nnu \in \indset_\infty \cup \rmarg_\infty$.
Since $\left(\leb(\nnu) \rro^{-\nnu} \right)_{\nnu \in \N^M} \in l^1(\N^M)$, recalling the definition of $\widehat\tau_{\infty\nnu}$ in~\eqref{eq:sequences}, we conclude that $\widehat\tau_\infty \in l^1(\N^M)$.

{\bf Step~2.}
Next, we prove that
$\norm[\big]{\widehat\tau_{\infty} - \widehat\tau_{\ell}}{l^1(\N^M)} \to 0$ as $\ell \to \infty$.
We again apply Lemma~\ref{lemma:sequences} to derive
\begin{align*}
    \norm[\big]{\widehat\tau_{\infty} - \widehat\tau_{\ell}}{l^1(\N^M)}
    & =
    \sum\limits_{\nnu \in (\indset_\infty \cup \rmarg_\infty)\setminus (\indset_\ell \cup \rmarg_\ell)} \tau_{\infty\nnu} =
    \sum\limits_{n=\ell+1}^{\infty} \;
    \sum\limits_{\nnu \in (\indset_n \cup \rmarg_n)\setminus (\indset_{n-1} \cup \rmarg_{n-1})} \tau_{\infty\nnu}
    \nonumber
    \\[4pt]
    & =
    \sum\limits_{n=\ell+1}^{\infty} \;
    \sum\limits_{ \nnu \in (\indset_n \cup \rmarg_n)\setminus (\indset_{n-1} \cup \rmarg_{n-1})} \tau_{n\nnu}
    \reff{eq:tau:est:final}\lesssim
    \sum\limits_{n=\ell+1}^{\infty} \;
    \sum\limits_{ \nnu \in (\indset_n \cup \rmarg_n)\setminus (\indset_{n-1} \cup \rmarg_{n-1})} \leb(\nnu) \rro^{-\nnu}
    \nonumber
    \\[4pt]
    & =
    \sum\limits_{ \nnu \in (\indset_\infty \cup \rmarg_\infty)\setminus (\indset_\ell \cup \rmarg_\ell)} \leb(\nnu) \rro^{-\nnu}.  
\end{align*}
Since $\left(\leb(\nnu) \rro^{-\nnu} \right)_{\nnu \in \N^M} \in l^1(\N^M)$ and
$\indset_\infty \cup \rmarg_\infty = \cup_{\ell \in \N_0} \indset_\ell \cup \rmarg_\ell$,
we conclude that
\begin{equation} \label{eq:limit:sequence}
    \lim\limits_{\ell \to \infty}  \norm[\big]{\widehat\tau_{\infty} - \widehat\tau_{\ell}}{l^1(\N^M)}  = \lim\limits_{\ell \to \infty} \sum\limits_{ \nnu \in (\indset_\infty \cup \rmarg_\infty)\setminus (\indset_\ell \cup \rmarg_\ell)} \leb(\nnu) \rro^{-\nnu} = 0.
\end{equation}

{\bf Step~3.}
In this step, we apply Lemma~\ref{lemma:parametric} with $p=1$ to prove that
$\sum_{\nnu \in \rmarg_\ell} \widehat\tau_{\infty\nnu} \to 0$ as $\ell \to \infty$.
In fact, all hypotheses of Lemma~\ref{lemma:parametric} are
satisfied in the present setting:
\begin{itemize}
    \item[$\bullet$]
    $\N^M$ is a countable set that can be identified (via a one-to-one map) with $\N$;

    \item[$\bullet$]
    $\left( \widehat\tau_{\infty\nnu}\right)_{\nnu \in \N^M} \in l^1(\N^M)$ is identified with a sequence
    $\left(x_n\right)_{n \in \N} \in l^1(\N)$;

    \item[$\bullet$]
    
    $\left( \widehat\tau_{\ell\nnu}\right)_{\nnu \in \N^M}$
    is identified with a sequence
    $\big(x^{(\ell)}_n\big)_{n \in \N}$
    for all $\ell \in \N_0$;

    \item[$\bullet$]
    after this identification, we conclude that
    $\lim\limits_{\ell \to \infty}  \norm[\big]{x_n  - x_n^{(\ell)}}{l^1(\N)}  \reff{eq:limit:sequence}= 0$;

    \item[$\bullet$]
    $\indset_\ell \subset \N^M$ is
    identified with a set $\PP_\ell \subset \N$
    for each $\ell \in \N_0$;
    
    \item[$\bullet$]
    the set of newly added indices $\indset_{\ell+1} \setminus \indset_\ell = \markindset_\ell \cup \{\nnu^*_\ell\}$ is thus
    identified with $\PP_{\ell+1} \setminus \PP_\ell$;

    \item[$\bullet$]
    with this identification, one has $\PP_\ell \subset \PP_{\ell+1}$ for $\ell \in \N_0$, and
    D{\"o}rfler marking~\eqref{ineq:dorfler} with $0 < \theta_\Colpts \le 1$ implies inequality~\eqref{eq:lemma:parametric}
    with $g(s) := \frac{1-\theta_\Colpts}{\theta_\Colpts} s $; indeed,
    \begin{equation*}
        \theta_\Colpts
        \bigg(
        \sum_{\nnu \in \rmarg_\ell\setminus \left(\markindset_\ell \cup \{\nnu^*_\ell\}\right)} \tau_{\ell \nnu} + 
        \sum_{\nnu \in \markindset_\ell \cup \{\nnu^*_\ell\}} \tau_{\ell\nnu}
        \bigg)
        \stackrel{\eqref{ineq:dorfler}}\le
        \sum_{\nnu \in \markindset_{\ell}} \tau_{\ell\nnu} \le
        \sum_{\nnu \in \markindset_\ell \cup \{\nnu^*_\ell\}} \tau_{\ell\nnu}
    \end{equation*}
    and therefore
    \begin{equation*} %\label{eq:doerfler:separate2}
            \widehat\tau_{\ell\nnu} \le
            \sum_{\nnu \in \rmarg_\ell\setminus \left(\markindset_\ell \cup \{\nnu^*_\ell\}\right)} \tau_{\ell \nnu} \le
            \frac{1-\theta_\Colpts}{\theta_\Colpts}
            \sum_{\mmu \in \markindset_\ell \cup \{\nnu^*_\ell\}} \widehat\tau_{\ell\mmu}\quad
            \forall \nnu \in \N^M \setminus \indset_{\ell+1}.          
    \end{equation*}
\end{itemize}
Hence, applying Lemma~\ref{lemma:parametric}, we prove that
$\sum\limits_{\nnu \in \N^M \setminus \indset_\ell} \widehat\tau_{\infty\nnu} \xrightarrow{\ell \to \infty} 0$.
Therefore, recalling that $\rmarg_\ell \subset \N^M \setminus \indset_\ell$, we conclude that
$\sum\limits_{\nnu \in \rmarg_\ell} \widehat\tau_{\infty\nnu} \xrightarrow{\ell \to \infty} 0$.

Now, the proof is completed by applying the results of Steps~2 and~3 to the right-hand side of~\eqref{eq:err:sequence} and
recalling that $\ell = \ell_k \xrightarrow{k \to \infty} \infty$.
\end{proof}

Next, we prove convergence of
\rev{two subsequences associated with alternative parametric error indicators given by}~\eqref{eq:param:indicator1} with
$w \in \left\{\ssol, \ssolh \right\}$.

\begin{theorem} \label{theorem:parametic:alt}
Suppose that the diffusion coefficient $a(x,\y)$ satisfies the assumptions of
either Lemma~{\rm \ref{lemma:summability:affine}} or Lemma~{\rm \ref{lemma:summability:general}}.
Let $(\ell_k)_{k \in \N_0} \subset \N_0$ denote the subsequence of iterations
where parametric enrichment occurs in Algorithm~{\rm \ref{algorithm}}
and assume that $\ell_k\, {\xrightarrow{k \to \infty}}\, \infty$.
Then
\rev{the subsequences
\[
   \bigg( \sum\limits_{\nnu \in \rmarg_{\ell_k}} \tau_{\ell_k\nnu}[\ssoll{\ell_k}] \bigg)_{k \in \N_0}
   \quad\text{and}\quad
   \bigg( \sum\limits_{\nnu \in \rmarg_{\ell_k}} \tau_{\ell_k\nnu}[\ssolhl{\ell_k}] \bigg)_{k \in \N_0}
\] 
converge to zero.}
\end{theorem}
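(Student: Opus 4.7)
The plan is to follow the overall structure of the proof of Theorem~\ref{theorem:parametric}, adapting only the step that relied on having a fixed limit sequence $\widehat\tau_\infty$. The enabling observation is that, under the hypotheses of either Lemma~\ref{lemma:summability:affine} (where the constant $C$ in~\eqref{eq:summability:property} is mesh-independent) or Lemma~\ref{lemma:summability:general} together with Remark~\ref{rem:summability:general} (where the analyticity constant $C_{\rm reg}$ depends only on the problem data), the summability property~\eqref{eq:summability:property} holds for the Taylor coefficients of $\ssoll{\ell}$ and $\ssolhl{\ell}$ \emph{uniformly in} $\ell$. Repeating the Cauchy--Schwarz calculation of Step~1 in the proof of Theorem~\ref{theorem:parametric} verbatim, but with $\ssolc$ replaced by $w \in \{\ssoll{\ell}, \ssolhl{\ell}\}$, then yields
\begin{equation*}
   \tau_{\ell\nnu}[w] \,\lesssim\, \leb(\nnu)\,\rro^{-\nnu}\quad
   \forall\,\nnu \in \indset_\ell \cup \rmarg_\ell,
\end{equation*}
with an implicit constant independent of $\ell$ and $\nnu$; together with~\eqref{eq:lebesgue}, this gives $(\leb(\nnu)\rro^{-\nnu})_{\nnu \in \N^M} \in l^1(\N^M)$.

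The dominated-convergence-type argument used in Theorem~\ref{theorem:parametric} via Lemma~\ref{lemma:parametric} cannot be transplanted directly here, since the semidiscrete solution changes at every iteration (so there is no natural fixed ``limit'' sequence to compare against) and the D\"orfler criterion~\eqref{ineq:dorfler} is formulated in terms of $\tau_{\ell\nnu}[\ssolc]$ rather than of $\tau_{\ell\nnu}[w]$. Instead, it will suffice to establish the following claim: for every $N \in \N$ there exists $K = K(N)$ such that the finite set $A_N := \{\nnu \in \N^M : \|\nnu\|_1 \le N\}$ is contained in $\indset_{\ell_k}$ for all $k \ge K$. Granting this, $\rmarg_{\ell_k} \cap A_N = \emptyset$ for $k \ge K(N)$, so
\begin{equation*}
   \sum_{\nnu \in \rmarg_{\ell_k}} \tau_{\ell_k\nnu}[w]
   \,\lesssim\,
   \sum_{\nnu \in \rmarg_{\ell_k}} \leb(\nnu)\,\rro^{-\nnu}
   \,\le\,
   \sum_{\|\nnu\|_1 > N} \leb(\nnu)\,\rro^{-\nnu}
   \,\xrightarrow{N \to \infty}\, 0,
\end{equation*}
and letting first $k \to \infty$ along the parametric-enrichment subsequence and then $N \to \infty$ delivers the convergence for both $w = \ssoll{\ell_k}$ and $w = \ssolhl{\ell_k}$.

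The main (and essentially only non-routine) obstacle is the verification of the claim, which is precisely where the supplementary index $\nnu^*_\ell$ selected by the minimal-$1$-norm rule~\eqref{eq:index:min} becomes indispensable (cf.~Remark~\ref{remark:param:marking}). I plan a strict-monotone-reduction argument on $|A_N \setminus \indset_{\ell_k}|$: as long as this quantity is positive at a parametric-enrichment iteration $\ell_k$, pick $\nnu$ of minimal $1$-norm in $A_N \setminus \indset_{\ell_k}$; for every $m$ with $\nu_m > 1$ one has $\|\nnu - \eeps_m\|_1 < \|\nnu\|_1 \le N$, so minimality excludes $\nnu - \eeps_m$ from $A_N \setminus \indset_{\ell_k}$ and hence $\nnu - \eeps_m \in \indset_{\ell_k}$, whence $\nnu \in \rmarg_{\ell_k}$. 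If $\nnu \in \markindset_{\ell_k}$, then $\nnu$ enters $\indset_{\ell_k+1}$; otherwise $\nnu \in \rmarg_{\ell_k} \setminus \markindset_{\ell_k}$ and~\eqref{eq:index:min} forces $\|\nnu^*_{\ell_k}\|_1 \le \|\nnu\|_1 \le N$, so $\nnu^*_{\ell_k} \in A_N \setminus \indset_{\ell_k}$ is added to $\indset_{\ell_k+1}$. Either way $|A_N \setminus \indset_{\ell_{k+1}}| < |A_N \setminus \indset_{\ell_k}|$, and the finiteness of $A_N$ concludes the argument.
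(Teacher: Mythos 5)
Your argument is correct, and it reaches the conclusion by a genuinely different route from the paper. The paper's proof does not reuse the summability-based bound from Theorem~\ref{theorem:parametric} at all: it invokes the analyticity of the semidiscrete solutions in $\Sigma(\G,\boldsymbol{\sigma})$ with a mesh-independent constant $C_{\rm reg}$ and applies \cite[Lemma~2.2]{FeischlS21} to obtain the surplus bound $\tau_{\ell_k\nnu}[w]\lesssim \leb(\nnu)\,e^{-\beta\|\bmf(\nnu-\1)\|_1}$, then combines the crude counts $\#\rmarg_{\ell_k}\le(k+1)^M$ and $\leb(\nnu)\lesssim(k+1)^{3M}$ with a quantitative lower bound $\min_{\nnu\in\rmarg_{\ell_k}}\|\nnu-\1\|_1\gtrsim\sqrt[M]{k+1}$, proved by a binomial counting argument based on the auxiliary index $\nnu^*_\ell$ of~\eqref{eq:index:min}; this yields the explicit decay $(k+1)^{4M}e^{-\beta\sqrt[M]{k+1}}$. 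You instead observe that the constants in Lemmas~\ref{lemma:summability:affine} and~\ref{lemma:summability:general} are mesh-independent, so the Cauchy--Schwarz estimate $\tau_{\ell\nnu}[w]\lesssim\leb(\nnu)\rro^{-\nnu}$ from Step~1 of Theorem~\ref{theorem:parametric} holds uniformly in $\ell$, and you then control the sum over $\rmarg_{\ell_k}$ by the $l^1$-tail over $\{\|\nnu\|_1>N\}$ once $A_N\subseteq\indset_{\ell_k}$. Your exhaustion claim is logically equivalent to the paper's statement that $\min_{\nnu\in\rmarg_{\ell_k}}\|\nnu\|_1\to\infty$, and your strict-descent proof of it (either the minimal-norm element of $A_N\setminus\indset_{\ell_k}$ lies in $\markindset_{\ell_k}$, or~\eqref{eq:index:min} forces $\nnu^*_{\ell_k}\in A_N\setminus\indset_{\ell_k}$) is sound, including the implicit point that $\markindset_{\ell_k}\neq\emptyset$ at every parametric-enrichment step. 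What each approach buys: the paper's version produces an explicit rate in the enrichment counter $k$ and exhibits the stronger decay available through $\bmf$ for Clenshaw--Curtis points, while yours is more elementary (no appeal to \cite[Lemma~2.2]{FeischlS21}, no counting of $\rmarg_{\ell_k}$, no case distinction between node families) but purely qualitative. Both hinge essentially on the supplementary index $\nnu^*_\ell$, consistent with Remark~\ref{remark:param:marking}.
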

\begin{proof}
We will prove the convergence result for $\tau_{\ell_k\nnu}[\ssoll{\ell_k}]$,
while the proof for $\tau_{\ell_k\nnu}[\ssolhl{\ell_k}]$ is exactly the same.
We will use the analyticity of the semidiscrete approximation $\ssoll{\ell_k}: \G \to \X_{\ell_k}$
(this property follows from~\cite[Lemma~3.2]{BabuskaNT07};
see also the proof of Lemma~\ref{lemma:summability:general} and Remark~\ref{rem:summability:general}).
Specifically,
the assumptions on the diffusion coefficient in 
either Lemma~\ref{lemma:summability:affine} or Lemma~\ref{lemma:summability:general} %,
guarantee that $\ssoll{\ell_k}(\cdot,\y)$
admits an analytic extension in the region
$\Sigma(\G, \boldsymbol{\sigma}) =
  \left\{ \boldsymbol{\zeta} \in \mathbb{C}^M, \; \text{dist}(\zeta_m, \G_m)\le \sigma_m, \; m=1,\dots, M\right\}
$
for some $\boldsymbol{\sigma} = [\sigma_1, \dots, \sigma_M]$;
furthermore,
$\max\limits_{\boldsymbol{\zeta} \in \Sigma(\G, \boldsymbol{\sigma})}
  \norm{\ssoll{\ell_k}(\cdot, \boldsymbol{\zeta})}{\X} \le C_{\rm reg}$
with a positive constant $C_{\rm reg}$ that depends on the problem data and is independent of the discretization in the spatial domain.
Hence, applying Lemma~\ref{lemma:interpolators} and then~\cite[Lemma 2.2]{FeischlS21},
we obtain for any $\nnu \in \rmarg_{\ell_k}$:
\begin{align} \label{eq:surplus:norm}
    \tau_{\ell_k\nnu}[\ssoll{\ell_k}]
    & \reff{eq:param:indicator1}=
    \norm[\bigg]{\Delta^{\bmf(\nnu)} \sum\limits_{\mmu \in \indset_{\ell_k} \cup \rmarg_{\ell_k}} \Delta^{\bmf(\mmu)} \ssoll{\ell_k}}{} =
    \norm[\bigg]{\Delta^{\bmf(\nnu)} \sum\limits_{\mmu \in \N^M} \Delta^{\bmf(\mmu)} \ssoll{\ell_k}}{} \reff{eq:poly:representation}=
    \norm[\big]{ \Delta^{\bmf(\nnu)} \ssoll{\ell_k}}{}
    \nonumber
    \\[4pt]
    & \lesssim
    \leb(\nnu) \, e^{-\beta \|\bmf(\nnu-\1)\|_1} \,
    \max\limits_{\boldsymbol{\zeta} \in \Sigma(\G, \boldsymbol{\sigma})} \norm{\ssoll{\ell_k}(\cdot, \boldsymbol{\zeta})}{\X} \le
    C_{\rm reg} \, \leb(\nnu) \, e^{-\beta \|\bmf(\nnu-\1)\|_1},
\end{align}
where $\beta:=\min\limits_{m = 1, \dots, M} \beta_m > 0$ with
$\beta_m := \log{\left(\frac{2\sigma_m}{|\G_m|} + \sqrt{1+\frac{4\sigma_m^2}{|\Gamma_m|^2}} \right)}$,
$\leb(\nnu)$ is the Lebesgue constant,
and the hidden constant is independent of $\ssoll{\ell_k}$ and the discretization in the spatial domain.

Note that for any $\nnu \in \rmarg_{\ell_k}$, the Lebesgue constant $\leb(\nnu)$ can be bounded as follows:
\begin{equation*} \label{ineq:lebesque}
    \leb(\nnu) \reff{eq:lebesgue}{\lesssim}
    \begin{cases}
        \prod_{m=1}^M \nu_m \le \left( \frac{\|\nnu\|_1}{M}\right)^M \le \left( \frac{M+k+1}{M} \right)^M \lesssim (k+1)^M
        &\text{for CC points,} %Clenshaw--Curtis points,} 
        \\[4pt]
        \prod_{m=1}^M \nu^2_m \log{\nu_m} \le \prod_{m=1}^M \nu^3_m \lesssim  (k+1)^{3M}
        & \text{for Leja points}.
    \end{cases}
\end{equation*}
Furthermore, the definition of the reduced margin implies that
\begin{equation*} \label{ineq:rmarg}
    \# \rmarg_{\ell_k} \le (k+1)^M.
\end{equation*}
Therefore, using~\eqref{eq:surplus:norm}, we obtain
\begin{align} \label{ineq:alt:param}
    \sum\limits_{\nnu \in \rmarg_{\ell_k}}
    \tau_{\ell_k\nnu}[\ssoll{\ell_k}] & 
    \lesssim
    \sum\limits_{\nnu \in \rmarg_{\ell_k}}
    \leb(\nnu) \, e^{-\beta \|\bmf(\nnu-\1)\|_1}
    \nonumber
    \\[4pt]
    & \lesssim
    \begin{cases}
        (k+1)^{2M} e^{-\beta\min\limits_{\nnu \in \rmarg_{\ell_k}} \|\bmf(\nnu-\1)\|_1 } & \text{for CC points,} % Clenshaw--Curtis points,}
        \\[4pt]
        (k+1)^{4M} e^{-\beta\min\limits_{\nnu \in \rmarg_{\ell_k}} \|\nnu-\1\|_1 } & \text{for Leja points}.
    \end{cases}
\end{align}
The parametric enrichment by adding any multi-index
$\nnu^*_{\ell_k} \in \rmarg_{\ell_k} \setminus \markindset_{\ell_k}$ satisfying~\eqref{eq:index:min}
ensures that $\min\limits_{\nnu \in \rmarg_{\ell_k}} \|\nnu-\1\|_1$ increases with parametric enrichments.
Let us prove this fact by estimating $\min\limits_{\nnu \in \rmarg_{\ell_k}} \|\nnu-\1\|_1$ in terms of the parametric enrichment counter $k$.
We consider the case of Leja points, and the arguments below apply immediately to CC %Clenshaw--Curtis
points, since $\bmf$ is a bijection.
Let $b_k := \min\limits_{\nnu \in \rmarg_{\ell_k}}\|\nnu-\1\|_1$, $k \in \N_0$.
Note that for a given $b \in \N$ there are $\binom{b + M - 1}{b}$ different multi-indices $\nnu \ge \1$ such that
$\|\nnu-\1\|_1 = b$.
Therefore, by marking a multi-index $\nnu^*_{\ell_k}$ satisfying~\eqref{eq:index:min} and adding it to the index set $\indset_{\ell_k}$,
it is guaranteed that any current value of $b_k$ will increase after at most $\binom{b_k + M - 1}{b_k}$ parametric enrichments.
Consequently, the number of parametric enrichments required to reach a given value of $b_k$ can be estimated as follows:
\begin{align*}
    k+1 & \le \sum_{n=1}^{b_k} \binom{n + M - 1}{n} = \sum_{n=1}^{b_k} \frac{(n+M-1)!}{n!(M-1)!} =
    \sum_{n=1}^{b_k} \prod_{m=1}^{M-1} \frac{n+m}{m} =
    \nonumber
    \\[4pt]
    & =
    \sum_{n=1}^{b_k} \prod_{m=1}^{M-1} \left(1+\frac{n}{m}\right) \le  \sum_{n=1}^{b_k} (1+n)^{M-1} <
    \sum_{n=1}^{b_k+1} n^{M-1} \lesssim (b_k + 1)^M.
\end{align*}
Thus, $\min\limits_{\nnu \in \rmarg_{\ell_k}}\|\nnu - \1\|_1 = b_k \gtrsim \sqrt[M]{k+1}$.
Substituting this estimate into~\eqref{ineq:alt:param}, we deduce that
\begin{equation*}
     \sum\limits_{\nnu \in \rmarg_{\ell_k}} \tau_{\ell_k \nnu}[\ssoll{\ell_k}] \lesssim
     (k+1)^{4M} e^{-\beta \sqrt[M]{k+1}} \xrightarrow{k\xrightarrow{}\infty} 0.
\end{equation*}
This concludes the proof.
\end{proof}

%-----------------------------------------------------------------------------------------
\section{Convergence of spatial error estimates} \label{sec:spat:convergence}

In this section, we prove convergence of spatial error indicators $\mu_{\ell_k \z}$
along a subsequence $\left(\ell_k\right)_{k \in \N_0}$ of iterations where spatial refinements occur.
In fact, for a fixed collocation point~$\z$, convergence of spatial error indicators $\mu_{\ell_k \z}$ in~\eqref{eq:2level:indicator}
can be inferred from the results of~\cite{msv08} for deterministic problems.
Indeed, for each sample $\z \in \G$, the problem formulation, its discretization and the adaptive refinement process
satisfy the general framework in~\cite[section~2]{msv08}.
Specifically:
(i)~the weak formulation~\eqref{eq:pde:weak} fits into the class of problems considered in~\cite[section~2.1]{msv08};
(ii)~the Galerkin discretization~\eqref{eq:sample:fem} satisfies the assumptions in~\cite[eqs.~(2.6)--(2.8)]{msv08};
(iii)~the spatial NVB refinement
satisfies the assumptions on mesh refinement in~\cite[eqs.~(2.5)~and~(2.14)]{msv08};
(iv)~the D{\"o}rfler marking criterion~\eqref{eq:doerfler:separate1} satisfies the marking condition in~\cite[eq.~(2.13)]{msv08};
and finally,
(v)~the local error indicators~\eqref{eq:2level:local:indicator} satisfy~\cite[eq.~(2.9b)]{msv08}.
\rev{Thus, repeating the arguments in the proof of Theorem~2.1 in~\cite{msv08},
we establish the following result.}

\begin{theorem} \label{theorem:spatial}
Let $\z \in \bigcup_{\ell \in \N_0} \Colpts_\ell$ be a collocation point generated by Algorithm~{\rm \ref{algorithm}}.
Let $(\ell_k)_{k \in \N_0} \subset \N_0$ denote a subsequence of iterations
where spatial refinements occur in Algorithm~{\rm \ref{algorithm}} such that $\z \in \Colpts_{\ell_0}$ and
$\ell_k\, {\xrightarrow{k \to \infty}}\, \infty$.
Then the associated spatial error indicators $\mu_{\ell_k \z}$ converge to zero, i.e.,
$\mu_{\ell_k \z} \xrightarrow{k \to \infty} 0$.
\end{theorem}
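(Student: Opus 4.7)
The plan is to reduce the statement to a direct application of the convergence result of Morin--Siebert--Veeser \cite{msv08} (namely their Theorem~2.1), as the paragraph preceding the theorem already outlines. The key observation is that for a fixed collocation point $\z$, the Galerkin problem~\eqref{eq:sample:fem} is a deterministic linear elliptic problem with coefficient $a(\cdot,\z)$ and right-hand side $f$, and the iterate $u_{\ell\z}$ depends on $\ell$ only through the spatial mesh $\TT_\ell$. Hence, along the subsequence $(\ell_k)_{k\in\N_0}$ of iterations at which spatial refinement occurs, we are precisely in the framework of an adaptive finite element method for a single deterministic problem.

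The first step is to make this reduction explicit and record that the data $(a(\cdot,\z),f)$ fit into the abstract class of coercive, symmetric, linear problems considered in~\cite[\S2.1]{msv08}, with the natural energy norm on $\X = H^1_0(D)$ equivalent to $\norm{\nabla\cdot}{L^2(D)}$ by~\eqref{eq:norm:equiv}. The second step is to verify items (i)--(v) listed just before the theorem statement: Galerkin orthogonality and stability of $u_{\ell_k\z}$ in $\X_{\ell_k}$ (from~\eqref{eq:sample:fem}); nestedness and shape-regularity of the NVB-refined meshes $(\TT_{\ell_k})$ together with the interior-node property required in~\cite[eq.~(2.5) and~(2.14)]{msv08}; the D\"orfler condition~\eqref{eq:doerfler:separate1} satisfied by the marked vertex set $\MM_{\ell_k\z}$; and the localised a posteriori structure of the two-level indicators $\mu_{\ell_k\z}(\xi)$ in~\eqref{eq:2level:local:indicator}, which fit the estimator axioms~\cite[eq.~(2.9b)]{msv08} since each $\mu_{\ell_k\z}(\xi)$ equals (up to the normalisation by $\norm{\widehat\varphi_{\ell_k,\xi}}{\X}$) the modulus of the residual tested against the hat function at the new vertex $\xi\in\NN_{\ell_k}^+$.

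The third step is the actual invocation: along $(\ell_k)$ the meshes $\TT_{\ell_k}$ are proper NVB refinements (the in-between parametric-enrichment iterations leave the mesh unchanged, so they contribute nothing and can be dropped without loss of generality), the marking at each $\ell_k$ satisfies a D\"orfler criterion with fixed parameter $\theta_\X$, and the Galerkin solutions $u_{\ell_k\z}$ converge in $\X$ by~\cite[Thm.~2.1]{msv08}. The convergence of the error estimators $\mu_{\ell_k\z} \to 0$ then follows from the reliability/efficiency built into the two-level indicator, exactly as in the corresponding conclusion of~\cite[Thm.~2.1]{msv08}. A minor bookkeeping point is that the global mesh $\TT_{\ell_k+1}$ is refined by the union $\bigcup_{\z'\in\Colpts_{\ell_k}}\MM_{\ell_k\z'}$, which is generally larger than $\MM_{\ell_k\z}$; this extra refinement only strengthens the D\"orfler property seen by the fixed point $\z$ and hence preserves applicability of~\cite{msv08}.

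The only genuinely delicate point, and where I expect any technical friction to lie, is checking that the hierarchical two-level indicator~\eqref{eq:2level:local:indicator} satisfies the discrete local lower bound and the estimator reduction property used implicitly in the MSV framework. This is standard for hat-function-based two-level estimators on NVB meshes in 2D/3D but needs to be stated carefully, either by invoking the equivalence of~\eqref{eq:2level:local:indicator} with a standard residual estimator on $\TT_{\ell_k}$ or by quoting directly from the hierarchical-estimator literature; once this equivalence is in hand, the MSV machinery yields $\mu_{\ell_k\z}\xrightarrow{k\to\infty}0$ with no further work.
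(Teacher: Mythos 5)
Your overall strategy --- freeze the collocation point $\z$, view \eqref{eq:sample:fem} as a deterministic adaptive FEM driven by the indicators \eqref{eq:2level:local:indicator} and the D\"orfler marking \eqref{eq:doerfler:separate1}, drop the parametric-enrichment iterations (which leave $\TT_\ell$ unchanged), and invoke the Morin--Siebert--Veeser framework --- is exactly the route taken in the paper, including the observation that refining by the union $\bigcup_{\z'}\MM_{\ell_k\z'}$ only strengthens the marking seen by the fixed point $\z$. The gap is in your final step. You derive $\mu_{\ell_k\z}\to 0$ from ``the reliability/efficiency built into the two-level indicator'' after first obtaining convergence of $u_{\ell_k\z}$ to the exact sample solution via \cite[Thm.~2.1]{msv08}. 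But reliability of a hierarchical two-level estimator for the sample problem is not available for free: it hinges on a saturation assumption, which this theorem does not make and which the paper is explicitly at pains to avoid here --- see the paragraph immediately following the theorem, which stresses that \cite[eq.~(2.9a)]{msv08} (global reliability) is \emph{not} used to prove $\mu_{\ell_k\z}\to 0$ and is only invoked later, in Corollary~\ref{cor:main}, to pass from estimator convergence to error convergence. If your argument genuinely routes through reliability, it silently imports the saturation assumption into a statement that is meant to be unconditional.

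The mechanism that actually closes the proof in \cite{msv08} is different: the discrete solutions converge to the Galerkin solution in the \emph{limit space} $\overline{\bigcup_k \X_{\ell_k}}$ (no reliability required), and the local condition \cite[eq.~(2.9b)]{msv08} --- which bounds each $\mu_{\ell_k\z}(\xi)$ by local quantities involving differences of discrete solutions and data oscillation --- combined with D\"orfler marking then forces the estimator itself to vanish. Relatedly, the ``genuinely delicate point'' you flag (discrete local lower bound and estimator reduction) is a red herring: those are ingredients of the contraction-based AFEM convergence proofs, not of \cite{msv08}. What must be verified for the two-level indicator is only \cite[eq.~(2.9b)]{msv08}, which is item (v) in the paper's checklist and follows from the definition \eqref{eq:2level:local:indicator}, since the numerator there is the residual of $u_{\ell_k\z}$ tested against the locally supported hat function $\widehat\varphi_{\ell_k,\xi}$. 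With the last step repaired along these lines, your reduction to \cite{msv08} coincides with the paper's proof.
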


\rev{It is important to note that the global reliability of the estimator (see~\eqref{eq:error:estimate} and~\cite[eq.~(2.9a)]{msv08})
is not needed in the proof of the above result.
The reliability is only used in~\cite[Theorem~2.1]{msv08} to prove convergence of the true finite element error.
Likewise, we will use the reliability of our error estimate to establish the convergence
of adaptively generated SC-FEM approximations to the true solution of problem~\eqref{eq:pde:strong}; see Corollary~\ref{cor:main}.}

%-----------------------------------------------------------------------------------------
\section{Convergence of the adaptive algorithm} \label{sec:main:results}

Now we are ready to prove the main result of this work.

\begin{theorem} \label{theorem:main}
Let $f\in L^2(D)$ and let the diffusion coefficient $a(x, \y)$ satisfy
the hypotheses of either Lemma~{\rm \ref{lemma:summability:affine}} or Lemma~{\rm \ref{lemma:summability:general}}.
Then for any choice of marking parameters $\theta_\X$, $\theta_\Colpts$ and $\vartheta$,
Algorithm~{\rm \ref{algorithm}} generates a convergent sequence of error estimates, specifically,
$\mu_\ell + \tau_\ell \to 0$ as $\ell \to \infty$.
\end{theorem}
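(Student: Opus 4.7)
The plan is to perform a case analysis on the subsequences of parametric enrichments and spatial refinements generated by Algorithm~\ref{algorithm}. Let $K_{\mathrm{par}} := \{\ell \in \N_0 : \markindset_\ell \ne \emptyset\}$ and $K_{\mathrm{sp}} := \N_0 \setminus K_{\mathrm{par}}$ denote, respectively, the iterations at which parametric enrichment and spatial refinement take place; these two sets partition $\N_0$, so it suffices to show that $\mu_\ell + \tau_\ell \to 0$ along each of them.

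If $|K_{\mathrm{par}}| < \infty$, then from some $\ell^{*}$ onward only spatial refinement occurs, the index set is frozen at a monotone finite $\indset_\infty$, and the collocation set $\Colpts_\infty = \Colpts_{\indset_\infty}$ is fixed and finite. Applying Theorem~\ref{theorem:spatial} to each $\z \in \Colpts_\infty$ yields $\mu_{\ell \z} \to 0$ along $K_{\mathrm{sp}}$, whence the finite sum in~\eqref{eq:err:indicators} forces $\mu_\ell \to 0$. The spatial branch of Algorithm~\ref{algorithm_m} is active at every $\ell \ge \ell^{*}$, so $\vartheta \sum_{\nnu \in \rmarg_\ell} \tau_{\ell \nnu} \le \sum_{\z \in \Colpts_\infty} \mu_{\ell \z}\|L_{\ell \z}\|_{L^p_\pi(\G)} \to 0$, and the second bound in~\eqref{eq:err:indicators} yields $\tau_\ell \to 0$.

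If $|K_{\mathrm{par}}| = \infty$, then Lemma~\ref{lemma:summability:affine} or Lemma~\ref{lemma:summability:general} supplies the summability property~\eqref{eq:summability:property} for the Taylor coefficients of $\ssolc$, and Theorem~\ref{theorem:parametric} gives $\sum_{\nnu \in \rmarg_{\ell_k}} \tau_{\ell_k \nnu} \to 0$ along the parametric subsequence $(\ell_k)_{k \in \N_0} = K_{\mathrm{par}}$. A key observation is that $\tau_\ell$ in~\eqref{eq:param:estimate} is built from the fixed coarse-mesh solutions $u_{0\z}$ and the Lagrange basis attached to $\Colpts_{\indset_\ell \cup \rmarg_\ell}$, and therefore depends on $\ell$ only through $\indset_\ell$; since $\indset_\ell$ is constant on the blocks between consecutive parametric iterations, $\tau_\ell$ is piecewise constant with block values $\tau_{\ell_k}$ tending to zero, giving $\tau_\ell \to 0$ for all $\ell$. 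Along $K_{\mathrm{par}}$ the parametric branch of the marking criterion is active, so $\mu_{\ell_k} \lesssim \sum_{\nnu \in \rmarg_{\ell_k}} \tau_{\ell_k \nnu} \to 0$ by~\eqref{eq:err:indicators}.

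The main obstacle is to extend $\mu_\ell \to 0$ to $\ell \in K_{\mathrm{sp}}$ in the subcase where both $K_{\mathrm{par}}$ and $K_{\mathrm{sp}}$ are infinite. Here I would invoke Theorem~\ref{theorem:parametic:alt}. Using that $\widehat u_{\ell \z} = \ssolhl{\ell}(\cdot,\z)$ and $u_{\ell \z} = \ssoll{\ell}(\cdot,\z)$ at each $\z \in \Colpts_\ell$, one has $\mu_\ell = \|S_\ell(\ssolhl{\ell} - \ssoll{\ell})\|$, and the identity $S_\ell v = v - (I - S_\ell)v$ combined with~\eqref{eq:poly:representation} and the triangle inequality gives
\[
\mu_\ell \;\le\; \|\ssolhl{\ell} - \ssoll{\ell}\|
 \;+\; \sum_{\nnu \in \N^M \setminus \indset_\ell}\bigl(\|\Delta^{\bmf(\nnu)}\ssolhl{\ell}\| + \|\Delta^{\bmf(\nnu)}\ssoll{\ell}\|\bigr).
\]
The tail sum is controlled by the mesh-independent exponential analyticity bound $\|\Delta^{\bmf(\nnu)}\ssoll{\ell}\| \lesssim \leb(\nnu)\, e^{-\beta\|\bmf(\nnu-\1)\|_1}$ (and the analogue for $\ssolhl{\ell}$) derived in the proof of Theorem~\ref{theorem:parametic:alt}; coupled with~\eqref{eq:lebesgue}, this bound is summable on $\N^M$, and its tail vanishes as $\indset_\ell$ exhausts $\N^M$---a property guaranteed by the $\nnu^{*}_\ell$-step in Algorithm~\ref{algorithm_m}, which as in the proof of Theorem~\ref{theorem:parametic:alt} forces $\min_{\nnu \in \rmarg_\ell}\|\nnu\|_1 \to \infty$. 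The FEM-error term $\|\ssolhl{\ell} - \ssoll{\ell}\|$ is handled via Theorem~\ref{theorem:spatial}: pointwise convergence $\|\widehat u_{\ell \z} - u_{\ell \z}\|_\X \to 0$ at each $\z \in \bigcup_\ell \Colpts_\ell$, together with analyticity of $\ssolhl{\ell} - \ssoll{\ell}$ in $\y$, density of $\bigcup_\ell \Colpts_\ell$ in $\G$, and the uniform bound from~\eqref{eq:norm:equiv}, yields $L^p_\pi(\G;\X)$-convergence by dominated convergence.
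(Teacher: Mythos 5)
Your overall architecture mirrors the paper's: the same trichotomy (finitely many parametric enrichments, finitely many spatial refinements, both infinite), the same use of Theorem~\ref{theorem:parametric} and the marking criterion to kill $\tau_\ell$ and $\mu_\ell$ along the parametric subsequence, and the same observation that $\tau_\ell$ depends on $\ell$ only through $\indset_\ell$ (cf.~\eqref{eq:prescribed:value}), which correctly propagates $\tau_\ell\to 0$ to all iterations. Those parts are fine. Where you diverge is the hard step --- controlling $\mu_\ell$ for $\ell$ in the spatial subsequence when both refinement types occur infinitely often --- and this is where your argument has a genuine gap. You bound $\mu_\ell \le \norm[\big]{\ssolhl{\ell}-\ssoll{\ell}}{} + \sum_{\nnu\notin\indset_\ell}\big(\norm{\Delta^{\bmf(\nnu)}\ssolhl{\ell}}{}+\norm{\Delta^{\bmf(\nnu)}\ssoll{\ell}}{}\big)$. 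The tail term is acceptable (the mesh-independent bound~\eqref{eq:surplus:norm} is summable over $\N^M$ and the $\nnu^*_\ell$-step does force $\indset_\ell\nearrow\N^M$, though the implication from $\min_{\nnu\in\rmarg_\ell}\|\nnu-\1\|_1\to\infty$ to exhaustion of $\N^M$ deserves its one-line induction). The problem is the first term: $\norm{\ssolhl{\ell}-\ssoll{\ell}}{}$ is a \emph{global} $L^p_\pi(\G;\X)$ quantity, and nothing in the paper's toolbox gives its convergence. Theorem~\ref{theorem:spatial} controls the indicators $\mu_{\ell\z}$ only at collocation points actually used by the algorithm; passing from there to $\norm{\widehat u_{\ell\z}-u_{\ell\z}}{\X}$ needs the local two-level reliability bound (implicit in~\eqref{eq:err:indicators} but never stated), the density of $\bigcup_\ell\Colpts_\ell$ in $\G$ needs both $\indset_\infty=\N^M$ and density of the 1D node families, and --- most importantly --- pointwise convergence on a countable (hence $\pi$-null) dense set does \emph{not} yield $L^p_\pi$ convergence ``by dominated convergence''; you would first have to upgrade to convergence everywhere via a uniform equicontinuity argument in $\y$ (obtainable from the uniform analyticity bound and Cauchy estimates, but not carried out). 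As written, this step does not close, and since your decomposition satisfies $\norm{\ssolhl{\ell}-\ssoll{\ell}}{}\le\mu_\ell+\text{(tail)}$ as well, it does not by itself reduce the problem to anything easier than $\mu_\ell$.

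The paper avoids this entirely. In its Step~3.2 it writes $S_{\ell^{(a)}_{k+1}}=\sum_{\nnu\in\indset_{\ell^{(b)}_q}}\Delta^{\bmf(\nnu)}+\sum_{\nnu\in\markindset_{\ell^{(b)}_q}\cup\{\nnu^*_{\ell^{(b)}_q}\}}\Delta^{\bmf(\nnu)}$, identifies the first block with $\mu_{\ell^{(b)}_q}$ (same mesh, previous collocation set), whose convergence is already known from the marking criterion along the parametric subsequence, and bounds the newly added surpluses by $\sum_{\nnu\in\rmarg_{\ell^{(b)}_q}}\tau_{\ell^{(b)}_q\nnu}[\ssoll{\ell^{(b)}_q}]+\sum_{\nnu\in\rmarg_{\ell^{(b)}_q}}\tau_{\ell^{(b)}_q\nnu}[\ssolhl{\ell^{(b)}_q}]$, which Theorem~\ref{theorem:parametic:alt} sends to zero; together with the Step~3.1 mechanism (consecutive spatial refinements with a frozen threshold eventually force a switch), this covers the spatial subsequence without ever needing control of the semidiscrete error away from the collocation points. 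I recommend you replace your treatment of the first term by this splitting, or else supply the missing local reliability, density, and equicontinuity arguments in full.
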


\begin{proof}
The assumption on $a(x, \y)$
implies that the Taylor coefficients $[\ssolc]_{\ii}$, $\ii \in \N_0^M$, defined by~\eqref{eq:Taylor:coeff}
with $w = \ssolc$ satisfy the summability property~\eqref{eq:summability:property} 
(see Lemmas~\ref{lemma:summability:affine} and~\ref{lemma:summability:general}).
This, in particular, enables the application of Theorem~\ref{theorem:parametric} in the proof below,
where we consider three possible refinement scenarios that may occur when running Algorithm~\ref{algorithm}.

{\bf Scenario~1.}
    In the first scenario, the \emph{spatial} refinement occurs finitely many times, i.e., $\exists\,\ell_0 \in \N_0$ such that
    $\sum_{\z \in \Colpts_\ell} \mu_{\ell \z} \norm{L_{\ell \z}}{L^p_{\pi}(\G)} 
      < \vartheta \sum_{\nnu \in \rmarg_\ell} \tau_{\ell \nnu}$
    for all $\ell \ge \ell_0$.
    In this case, applying
    Theorem~\ref{theorem:parametric}, we~obtain
    \begin{equation} \label{eq:mu_ell:sequence}
       \tau_\ell \xrightarrow{\ell \to \infty} 0\quad \text{and}\quad
       \mu_\ell \reff{eq:err:indicators}\lesssim \sum_{\z \in \Colpts_\ell} \mu_{\ell \z} \norm{L_{\ell \z}}{L^p_{\pi}(\G)} <
       \vartheta \sum\limits_{\nnu \in \rmarg_\ell} \tau_{\ell  \nnu}\xrightarrow{\ell \to \infty} 0.
    \end{equation}

{\bf Scenario~2.}
    In the second scenario, the \emph{parametric} refinement occurs finitely many times, i.e.,
    $\exists\,\ell_0 \in \N_0$ such that
    $\sum_{\nnu \in \rmarg_\ell} \tau_{\ell \nnu} \le
      \vartheta^{-1}\sum_{\z \in \Colpts_\ell} \mu_{\ell \z} \norm{L_{\ell \z}}{L^p_{\pi}(\G)}$
    for all $\ell \ge \ell_0$.
    In this case, starting with iteration $\ell = \ell_0$, the algorithm performs only spatial refinements.
    Therefore, the set of collocation points $\Colpts_\ell$ (and, consequently, the associated set of Lagrange polynomials) 
    stays the same for all iterations $\ell \ge \ell_0$.
    Thus, applying Theorem~\ref{theorem:spatial},
    we conclude that
    \[
       \tau_\ell \le \sum_{\nnu \in \rmarg_\ell} \tau_{\ell \nnu} \le
       \vartheta^{-1}\sum_{\z \in \Colpts_\ell} \mu_{\ell \z} \norm{L_{\ell \z}}{L^p_{\pi}(\G)}  \xrightarrow{\ell \to \infty} 0
       \quad \text{and}\quad \mu_\ell \xrightarrow{\ell \to \infty} 0.
    \]

{\bf Scenario~3.}
    Finally, both types of refinement may occur infinitely often.
    In this case,
    we split the sequences $\left(\mu_\ell\right)_{\ell \in \N_0}$ and $\left(\tau_\ell\right)_{\ell \in \N_0}$ into disjoint subsequences
    as follows:
    \[
       \left(\mu_\ell\right)_{\ell \in \N_0} = \big(\mu_{\ell^{(a)}_k}\big)_{k \in \N_0} \cup \big(\mu_{\ell^{(b)}_k}\big)_{k \in \N_0} \quad
       \text{and}\quad
       \left(\tau_\ell\right)_{\ell \in \N_0} = \big(\tau_{\ell^{(a)}_k}\big)_{k \in \N_0} \cup \big(\tau_{\ell^{(b)}_k}\big)_{k \in \N_0};
    \]
    here, the subsequences indexed by $(a)$ (resp., by $(b)$) correspond to iterations
    where only the spatial (resp., parametric) refinement occurs (see Figure~\ref{fig:subseq},
    where we denote by
    $\bar\mu_{\ell} := \sum_{\z \in \Colpts_\ell} \mu_{\ell \z} \norm{L_{\ell \z}}{L^p_{\pi}(\G)}$ 
    (resp., $\bar\tau_{\ell} := \sum_{\nnu \in \rmarg_\ell} \tau_{\ell \nnu}$)
    \rev{the weighted sum of spatial (resp., parametric) error indicators}
    at the $\ell$-th iteration).

\begin{figure}[!thp]
\centering
\begin{tikzpicture}
\pgfplotstableread{subsequences_example.dat}{\one}
\begin{loglogaxis}
[
width = 7.2cm, height = 7.2cm,						% figure's size
ytick={0.1,0.01},
xlabel={degrees of freedom (dof)}, 					% x label
xlabel style={font=\fontsize{10pt}{12pt}\selectfont},		% x label font
ylabel={\rev{weighted sum of error indicators}},				% y label
ylabel style={font=\fontsize{10pt}{12pt}\selectfont}, 		% y label font
ymajorgrids=true, xmajorgrids=true, grid style=dashed,	% grid 
xmin=2*10^4, xmax=1.1*10^6,						% x-axis limit
ymin = 0.03,	 ymax = 0.3,							% y-axis limit
legend style={legend pos=south west, legend cell align=left, fill=none}
]
%
%\addplot[black,mark=o,mark size=2.2pt]		table[x=dofs, y=error]{\one};
\addplot[blue,mark=square,mark size=2pt]		table[x=dofs, y=error_s]{\one};
\addplot[red,mark=diamond,mark size=2.6pt]		table[x=dofs, y=error_p]{\one};
%\node at (1, 1) {$A_1$};
\draw (axis cs:24246, 0.244354033) +(0pt,5pt) node[right]{$\ell_0^{(a)} = 0$};
\draw [<-] (axis cs:33210, 0.083353904) -- +(0pt,-18pt) node{$\ell_1^{(a)} = 1$};
\draw (axis cs:44838, 0.177516045) +(0pt,5pt) node[right]{$\ell_2^{(a)} = 2$};
\draw [<-](axis cs:61929, 0.083353904) -- +(0pt,-30pt) node{$\ell_3^{(a)} = 3$};
\draw (axis cs:90684, 0.131570124) +(0pt,5pt) node[right]{$\ell_4^{(a)} = 4$};
\draw [<-] (axis cs:116883,0.083353904) -- +(0pt,-18pt) node{$\ell_5^{(a)} = 5$};
\draw (axis cs:155007,0.096908271) +(0pt,5pt) node[right]{$\ell_6^{(a)} = 6$};
\draw [<-] (axis cs:210015,0.083353904) -- +(0pt,-30pt) node{$\ell_7^{(a)} = 7$};
\draw (axis cs:301473,0.072608466)  +(2pt,3pt) node[right]{$\ell_0^{(b)} = 8$};
\draw (axis cs:502455,0.134431179) +(2pt,10pt) node{$\ell_8^{(a)} = 9$};
\draw [<-] (axis cs:715155,0.042838902) -- +(-4pt,-10pt) node{$\;\ell_{9}^{(a)} = 10$};
%\draw (axis cs:949230,0.096314975) +(0pt,-5pt) node[left]{$\ell_k^{(a)}$};
\legend{
%$\bar\eta_\ell$ (total),
$\bar\mu_\ell$ (spatial),
$\bar\tau_\ell$ (parametric)
}
\end{loglogaxis}
\end{tikzpicture}
\caption{\rev{An example of iteration subsequences
illustrating Scenario~3 in the proof of Theorem~\ref{theorem:main}.}}
\label{fig:subseq}
\end{figure}

        For the subsequences $\big(\tau_{\ell^{(b)}_k}\big)_{k \in \N_0}$ and
        $\big(\mu_{\ell^{(b)}_k}\big)_{k \in \N_0}$,
        arguing as in~\eqref{eq:mu_ell:sequence} we conclude~that
        \begin{equation}\label{eq:subseq:b}        
           \tau_{\ell^{(b)}_k}  \xrightarrow{k \to \infty} 0\quad \text{and}\quad
           \mu_{\ell^{(b)}_k} \lesssim \sum_{\z \in \Colpts_{\ell^{(b)}_k}} \mu_{\ell^{(b)}_k \z}
           \norm[\big]{L_{{\ell^{(b)}_k} \z}}{L^p_{\pi}(\G)} <
           \vartheta \sum\limits_{\nnu \in \rmarg_{\ell^{(b)}_k}} \tau_{{\ell^{(b)}_k} \nnu} \xrightarrow{k \to \infty} 0.
        \end{equation}
Thus, it remains to show that $\mu_{\ell^{(a)}_k} \xrightarrow{k \to \infty} 0$  and  $\tau_{\ell^{(a)}_k} \xrightarrow{k \to \infty} 0$.
For any $k \in \N_0$, we denote by $q = q(k) \in \N_0$ and $n = n(k) \in \N$ the smallest possible integers
such that $\ell^{(a)}_k + n = \ell^{(b)}_{q}$
(such values of $q$ and $n$ always exist, since both types of refinement occur infinitely many times;
furthermore, $q \to \infty$ as $k \to \infty$).
We split the rest of the proof into two steps.

{\bf Step~3.1.}
        Firstly, if for several consecutive $k \in \N$, we have $\ell^{(a)}_{k+1} = \ell^{(a)}_{k} + 1$,
        it means that a number of spatial refinements occur sequentially;
        e.g., in Figure~\ref{fig:subseq}, this corresponds to iterations $\ell^{(a)}_k$ with $k=0, \dots, 7$.
        Thus, the set of collocation points $\Colpts_{\ell^{(a)}_{k+i}}$ and the associated set of Lagrange polynomials remain the same for
        $i = 0, 1, \dots, n$.
        Due to Theorem~\ref{theorem:spatial} and the nature of the scenario we are considering,
        the \rev{weighted} sum of spatial indicators,
        $\bar\mu_{\ell_k^{(a)}} = \sum_{\z \in \Colpts_{\ell^{(a)}_k}} \mu_{{\ell^{(a)}_k} \z} \, \norm{L_{{\ell^{(a)}_k} \z}}{L^p_{\pi}(\G)}$,
        will eventually fall below the following threshold as $k$ increases: 
        \begin{equation} \label{eq:prescribed:value}  
        \vartheta \!\! \sum\limits_{\nnu \in \rmarg_{\ell^{(a)}_{k}}} \tau_{\ell^{(a)}_{k} \nnu} \,{=}\,
        \vartheta \!\! \sum\limits_{\nnu \in \rmarg_{\ell^{(a)}_{k}+1}} \tau_{(\ell^{(a)}_{k}+1)  \nnu} \,{=}\, \ldots \,{=}\,
        \vartheta \!\! \sum\limits_{\nnu \in \rmarg_{\ell^{(a)}_{k}+n}} \tau_{(\ell^{(a)}_{k}+n)  \nnu} \,{=}\,
        \vartheta \!\! \sum\limits_{\nnu \in \rmarg_{\ell^{(b)}_{q}}} \tau_{\ell^{(b)}_{q}  \nnu} \xrightarrow{q \to \infty} 0
        \end{equation}
        (here, the equality is ensured by the fact that $\tau_{\ell \nnu}$ are independent of mesh refinements,
        since they are calculated using the coarsest-mesh Galerkin approximations; cf.~\eqref{eq:param:indicator}).
        This triggers the change of the refinement type from spatial to parametric, i.e.,        
        \begin{align*}
            \sum_{\z \in \Colpts_{\ell^{(a)}_k+n}} \mu_{({\ell^{(a)}_k + n}) \z} \, \norm[\big]{L_{({\ell^{(a)}_k+n}) \z}}{L^p_{\pi}(\G)} = 
            \sum_{\z \in \Colpts_{\ell^{(b)}_{q}}} \mu_{{\ell^{(b)}_{q}} \z} \, \norm[\big]{L_{{\ell^{(b)}_{q}} \z}}{L^p_{\pi}(\G)} <
            \vartheta \sum\limits_{\nnu \in \rmarg_{\ell^{(b)}_{q}}} \tau_{\ell^{(b)}_{q}\nnu}  \xrightarrow{q \to \infty} 0.
        \end{align*}
        
{\bf Step~3.2.}
       Next, let us consider the case when $\ell^{(a)}_{k+1} > \ell^{(a)}_{k} + 1$,
       i.e., at least one parametric enrichment occurs between two spatial refinements
       (for example, see iterations $\ell_7^{(a)} = 7$, $\ell_8^{(a)} = 9$ and $\ell_0^{(b)} = 8$ in Figure~\ref{fig:subseq}).
       We will show that in this case the spatial error estimate $\mu_{\ell^{(a)}_{k+1}}$ is bounded
       by a quantity that converges to zero as $k \to \infty$.
       Using the definition of spatial error estimates in~\eqref{eq:spatial:estimate} and the marking criterion~\eqref{eq:param:marking}
       for parametric enrichment, we obtain
       \begin{align} \label{eq:error:est:upper:bound}
           \mu_{\ell^{(a)}_{k+1}}  & = 
           \norm[\Big]{S_{\ell^{(a)}_{k+1}} \Big( \widehat U_{\ell^{(a)}_{k+1}} - U_{\ell^{(a)}_{k+1}} \Big)}{} \reff{eq:S:def} =
           \norm[\bigg]{ \sum\limits_{\nnu \in \indset_{\ell^{(b)}_{q}} \cup \markindset_{\ell^{(b)}_{q}}\cup \{\nnu^*_{\ell^{(b)}_{q}}\}}
                       \Delta^{\bmf(\nnu)} \big( \ssolhl{\ell^{(a)}_{k+1}} - \ssoll{\ell^{(a)}_{k+1}} \big) }{}
           \nonumber
            \\[4pt]
            & 
            \le \norm[\bigg]{ \sum\limits_{\nnu \in \indset_{\ell^{(b)}_{q}}}
                             \Delta^{\bmf(\nnu)} \big( \ssolhl{\ell^{(a)}_{k+1}} - \ssoll{\ell^{(a)}_{k+1}} \big) }{} +
            \sum\limits_{\nnu \in \rmarg_{\ell^{(b)}_{q}}}
            \norm[\Big]{\Delta^{\bmf(\nnu)} \big( \ssolhl{\ell^{(a)}_{k+1}} - \ssoll{\ell^{(a)}_{k+1}} \big)}{}, 
        \end{align}
        where $\ssoll{\ell^{(a)}_{k+1}}$ and $\ssolhl{\ell^{(a)}_{k+1}}$ are semidiscrete solutions satisfying~\eqref{eq:semidiscrete}
        with $\W = \X_{\ell^{(a)}_{k+1}}$ and $\W = \widehat\X_{\ell^{(a)}_{k+1}}$, respectively.
        Note that $\ssoll{\ell^{(a)}_{k+1}} = \ssoll{\ell^{(b)}_{q}}$, as the finite element mesh does not change
        during the parametric enrichment step.
        Therefore, the first term on the right-hand side of~\eqref{eq:error:est:upper:bound} is an element of the sequence
        $(\mu_{\ell^{(b)}_k} )_{k\in \N_0}$, for which we have already proved convergence; cf.~\eqref{eq:subseq:b}.
        Thus,
        \begin{equation} \label{eq:spat:b:converge}
            \norm[\bigg]{\sum\limits_{\nnu \in \indset_{\ell^{(b)}_{q}}}
                                                                                    \Delta^{\bmf(\nnu)} \big( \ssolhl{\ell^{(b)}_{q}} - \ssoll{\ell^{(b)}_{q}} \big)}{}
            = \mu_{\ell^{(b)}_{q}} \xrightarrow{q \to \infty} 0.
        \end{equation}
        The second term on the right-hand side of~\eqref{eq:error:est:upper:bound} can be estimated using the triangle inequality:
        \begin{equation}\label{eq:2:alt:param}
             \sum\limits_{\nnu \in \rmarg_{\ell^{(b)}_{q}}} \norm[\Big]{\Delta^{\bmf(\nnu)} \big( \ssolhl{\ell^{(b)}_{q}} - \ssoll{\ell^{(b)}_{q}} \big)}{} \le
             \sum\limits_{\nnu \in \rmarg_{\ell^{(b)}_{q}}}
             \norm[\Big]{\Delta^{\bmf(\nnu)} \ssolhl{\ell^{(b)}_{q}}}{} +
             \sum\limits_{\nnu \in \rmarg_{\ell^{(b)}_{q}}} \norm[\Big]{\Delta^{\bmf(\nnu)}\ssoll{\ell^{(b)}_{q}}}{}.
        \end{equation}
        \rev{For the first sum on the right-hand side of~\eqref{eq:2:alt:param},
        using Lemma~\ref{lemma:interpolators} and the definition in~\eqref{eq:param:indicator1}
        with $w = \ssolhl{\ell^{(b)}_{q}}$, we find that
        \[
            \sum\limits_{\nnu \in \rmarg_{\ell^{(b)}_{q}}} \norm[\Big]{\Delta^{\bmf(\nnu)}\ssolhl{\ell^{(b)}_{q}}}{}
            \reff{eq:poly:representation} =
            \sum\limits_{\nnu \in \rmarg_{\ell^{(b)}_{q}}}
            \norm[\bigg]{\Delta^{\bmf(\nnu)}
            \sum\limits_{\mmu \in \indset_{\ell^{(b)}_{q}} \cup \rmarg_{\ell^{(b)}_{q}}}\Delta^{\bmf(\mmu)} \ssolhl{\ell^{(b)}_{q}}}{}
            \reff{eq:param:indicator1} =
            \sum\limits_{\nnu \in \rmarg_{\ell^{(b)}_{q}}} \tau_{\ell^{(b)}_{q}\nnu} \big[ \ssolhl{\ell^{(b)}_{q}} \big].
       \]
       Thus, applying Theorem~\ref{theorem:parametic:alt}, we conclude that
        \begin{equation} \label{eq:alt:converge}
            \sum\limits_{\nnu \in \rmarg_{\ell^{(b)}_{q}}} \norm[\Big]{\Delta^{\bmf(\nnu)}\ssolhl{\ell^{(b)}_{q}}}{}
            \xrightarrow{q \to \infty} 0.
       \end{equation}
    The same arguments apply to the second sum on the right-hand side of~\eqref{eq:2:alt:param}.}
    
    From~\eqref{eq:error:est:upper:bound}--\eqref{eq:alt:converge}
    we conclude that $\mu_{\ell^{(a)}_{k+1}} \xrightarrow{k \to \infty} 0$.
    Furthermore, it follows from~\eqref{eq:prescribed:value} and~\eqref{eq:err:indicators} that
    $\tau_{\ell^{(a)}_{k+1}} \le \sum_{\nnu \in \rmarg_{\ell_{k+1}^{(a)}}} \tau_{{\ell^{(a)}_{k+1}} \nnu} \xrightarrow{k \to \infty} 0$.
    Thus, we have proved that all considered subsequences converge to zero as $k \to \infty$. 
    Hence,
    $\mu_{\ell} + \tau_{\ell} \xrightarrow{\ell \to \infty} 0.$

For each refinement scenario, we have established convergence of spatial and parametric error estimates to zero.
This concludes the proof of the theorem.
\end{proof}

The following result is an immediate consequence of Theorem~\ref{theorem:main} and
the a posteriori error estimate in~\eqref{eq:error:estimate}.

\begin{corollary} \label{cor:main}
Let $f\in L^2(D)$ and let the diffusion coefficient $a(x, \y)$ satisfy
the hypotheses of either Lemma~{\rm \ref{lemma:summability:affine}} or Lemma~{\rm \ref{lemma:summability:general}}.
Let $\big(u_{\ell}^{\rm SC}\big)_{\ell \in \N_0}$ be the sequence of SC-FEM approximations generated by Algorithm~{\rm \ref{algorithm}} and
denote by $\big(\widehat u_{\ell}^{\rm SC}\big)_{\ell \in \N_0}$ the associated sequence of enhanced SC-FEM approximations
(as described in section~{\rm \ref{sec:error:estimate}}).
Suppose that the saturation assumption~{\rm \eqref{eq:saturation}} holds for each pair
$u_{\ell}^{\rm SC},\, \widehat u_{\ell}^{\rm SC}$ ($\ell \in \N_0$).
Then for any choice of marking parameters $\theta_\X$, $\theta_\Colpts$ and $\vartheta$,
the sequence of SC-FEM approximations converges to the true solution of problem~\eqref{eq:pde:strong},
i.e., $\norm{u - u_{\ell}^{\rm SC}}{} \to 0$ as $\ell \to \infty$.
\end{corollary}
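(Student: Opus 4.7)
The plan is to derive the corollary directly by chaining together the reliability estimate~\eqref{eq:error:estimate} with the convergence result of Theorem~\ref{theorem:main}. The reliability bound was established in~\cite{Bespalov22} under the saturation assumption~\eqref{eq:saturation}, and both ingredients are now available for the sequence of SC-FEM approximations generated by Algorithm~\ref{algorithm}.

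First, I would fix an arbitrary iteration index $\ell \in \N_0$ and apply~\eqref{eq:error:estimate} to the pair $u_\ell^{\rm SC}, \widehat u_\ell^{\rm SC}$. Since the saturation assumption is assumed to hold for every such pair with a single constant $q_{\rm sat} \in (0,1)$ (independent of $\ell$), this yields the uniform bound
\begin{equation*}
   \norm{u - u_\ell^{\rm SC}}{} \le (1 - q_{\rm sat})^{-1} \big(\mu_\ell + \tau_\ell\big) \quad\text{for all } \ell \in \N_0.
\end{equation*}

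Second, I would invoke Theorem~\ref{theorem:main}, whose hypotheses on $f$ and on the diffusion coefficient $a(x,\y)$ coincide with those of the corollary. This yields $\mu_\ell + \tau_\ell \to 0$ as $\ell \to \infty$, for any choice of the marking parameters $\theta_\X$, $\theta_\Colpts$ and $\vartheta$. Passing to the limit $\ell \to \infty$ in the displayed inequality gives $\norm{u - u_\ell^{\rm SC}}{} \to 0$, which is the desired conclusion.

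The proof is essentially a one-line combination of two known results, so there is no substantial obstacle; the only point worth spelling out is that the constant $(1-q_{\rm sat})^{-1}$ is independent of $\ell$ (this is precisely what makes the saturation assumption in the hypothesis a meaningful uniform-in-$\ell$ statement rather than a pointwise one), so that the limit can indeed be taken.
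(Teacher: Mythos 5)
Your proposal is correct and follows exactly the route the paper takes: the paper states that the corollary is an immediate consequence of the reliability bound~\eqref{eq:error:estimate} and Theorem~\ref{theorem:main}, which is precisely your two-step chaining argument. Your remark that $q_{\rm sat}$ is uniform in $\ell$ (as built into the saturation assumption~\eqref{eq:saturation}) is the only detail worth making explicit, and you have done so.
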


%-----------------------------------------------------------------------------------------
\section{Numerical results} \label{sec:numerics}

In this section, we present the numerical results that underpin our theoretical findings.
These results were generated using \rev{the open-source MATLAB toolbox Adaptive ML-SCFEM~\cite{BespalovSX_a_ml_scfem}
on an Intel Core i5-6500 3.20GHz CPU with 16GB of RAM}.

For each of the two test cases described below, we set an error tolerance and run Algorithm~\ref{algorithm} with the stopping criterion
$\mu_\ell + \tau_\ell < \tt{error tolerance}$, where $\mu_\ell$ and $\tau_\ell$ are the spatial and parametric error estimates, respectively
(see~\eqref{eq:spatial:estimate},~\eqref{eq:param:estimate}).
In all experiments, we employ the marking strategy in Algorithm~\ref{algorithm_m} with marking parameters
$\vartheta =1$, $\theta_\X = \theta_\Colpts = 0.3$.
In particular, the type of refinement is determined in Algorithm~\ref{algorithm_m} by comparing
\rev{the weighted sums of spatial and parametric error indicators, i.e.,}
$\bar\mu_{\ell} = \sum_{\z \in \Colpts_\ell} \mu_{\ell \z} \norm{L_{\ell \z}}{L^p_{\pi}(\G)}$
and $\bar\tau_{\ell}= \sum_{\nnu \in \rmarg_\ell} \tau_{\ell \nnu}$.

In both test cases, the parameters $y_m, \; m=1, \dots, M$ are the images of uniformly 
distributed independent mean-zero random variables, so that 
$\dpi_m(y_m) = \frac{1}{2} \mathrm{d}y_m$.
We will present the results for both Leja and Clenshaw-Curtis sets of collocation points in each test case.

%-----------------------------------------------------------------------------------------
\subsection{Test case I:  affine coefficient data (cookie problem)}\label{sec:affineresults}

Our first example is the test problem considered in~\cite[section~4.2]{FeischlS21}.
Let $D = (0,1)^2$ and let $F,\, A_1,\, A_2,\, \ldots,\, A_8$ be nine disjoint subdomains of $D$ as depicted
in the left plot of Figure~\ref{fig:spatial:domain}.
We set the forcing term as the characteristic function of $F$, i.e., $f(x) = 100\chi_F(x)$,
and look to solve the model problem~\eqref{eq:pde:strong} with the parametric coefficient given by
\begin{align} \label{kl}
   a(x, \y) = a_0(x) + \sum_{m = 1}^8 a_m(x) \, y_m,\quad
   x \in D,\ \y \in \Gamma.
\end{align}
Following~\cite[section~4.2]{FeischlS21}, we set the expansion coefficients as 
\begin{equation}
    a_0(x) \equiv 1.1 \; \text{and} \; a_m(x) = \omega_m \chi_m(x)\; \text{for} \; m=1, \dots, 8,
\end{equation}
where $\{\omega_m\}_{m=1}^8 = \{1, 0.8, 0.4, 0.2, 0.1, 0.05, 0.02, 0.01\}$ and
$\chi_m(x)$ is the characteristic function of subdomain $A_m$.

\begin{figure}[!th]
\centering
\begin{tikzpicture}[thick,scale=6, every node/.style={scale=1}]
\draw (0.1, 0.1) -- (0.1, 0.3) -- (0.3, 0.3) -- (0.3, 0.1) -- cycle;%1
\draw (0.4, 0.1) -- (0.4, 0.3) -- (0.6, 0.3) -- (0.6, 0.1) -- cycle;%2
\draw (0.7, 0.1) -- (0.7, 0.3) -- (0.9, 0.3) -- (0.9, 0.1) -- cycle;%3
\draw (0.1, 0.4) -- (0.1, 0.6) -- (0.3, 0.6) -- (0.3, 0.4) -- cycle;%4
\draw (0.4, 0.4) -- (0.4, 0.6) -- (0.6, 0.6) -- (0.6, 0.4) -- cycle;%F
\draw (0.7, 0.4) -- (0.7, 0.6) -- (0.9, 0.6) -- (0.9, 0.4) -- cycle;%5
\draw (0.1, 0.7) -- (0.1, 0.9) -- (0.3, 0.9) -- (0.3, 0.7) -- cycle;%6
\draw (0.4, 0.7) -- (0.4, 0.9) -- (0.6, 0.9) -- (0.6, 0.7) -- cycle;%7
\draw (0.7, 0.7) -- (0.7, 0.9) -- (0.9, 0.9) -- (0.9, 0.7) -- cycle;%8
\draw (0, 1) -- (1, 1) -- (1, 0);
\node at (0.2,0.2) {$A_1$};
\node at (0.5,0.2) {$A_2$};
\node at (0.8,0.2) {$A_3$};
\node at (0.2,0.5) {$A_4$};
\node at (0.5,0.5) {$F$};
\node at (0.8,0.5) {$A_5$};
\node at (0.2,0.8) {$A_6$};
\node at (0.5,0.8) {$A_7$};
\node at (0.8,0.8) {$A_8$};
\draw[thick,->] (0,0) -- (1.1,0) node[anchor=north west] {$x_1$};
\draw[thick,->] (0,0) -- (0,1.1) node[anchor=south east] {$x_2$};
\foreach \x in {0,0.1,0.3,0.5,0.7,0.9,1}
   \draw (\x cm,1pt) -- (\x cm,-1pt) node[anchor=north] {$\x$};
\foreach \y in {0,0.1,0.3,0.5,0.7,0.9,1}
    \draw (1pt,\y cm) -- (-1pt,\y cm) node[anchor=east] {$\y$};
\end{tikzpicture}
\includegraphics[width = 0.45\textwidth, trim=0cm 4cm 0cm 4cm,clip]{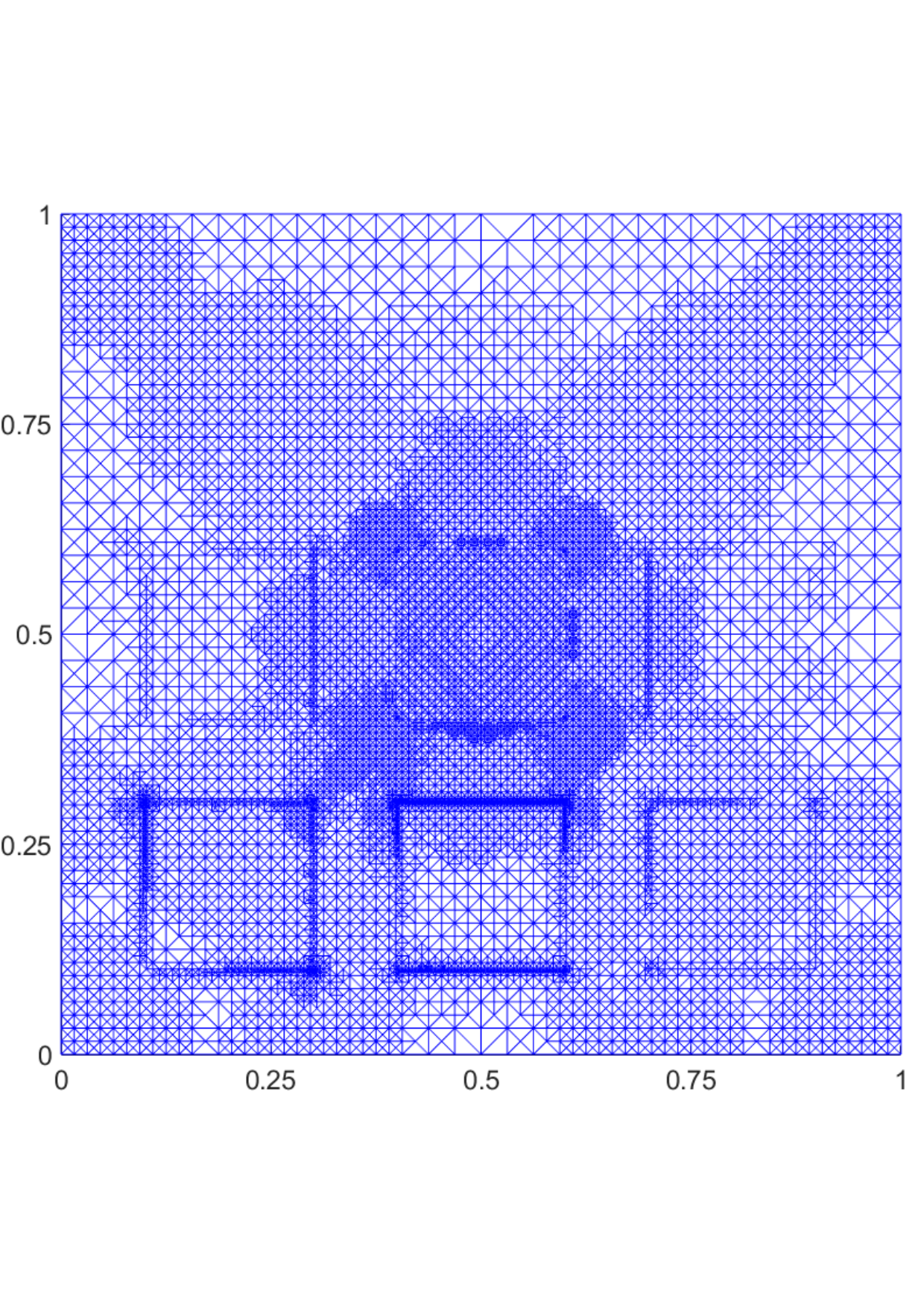}
\caption{Test case I: spatial domain and subdomains (left) and the refined mesh after 18 iterations of Algorithm~\ref{algorithm}
using Leja collocation points (right).}
\label{fig:spatial:domain}
\end{figure}

\begin{figure}[!th]
\begin{tikzpicture}
\pgfplotstableread{sl_sc_test1Leja.dat}{\one}
\begin{loglogaxis}
[
width = 7.5cm, height = 7.5cm,						% figure's size
ytick={1, 0.1, 0.01},
xlabel={degrees of freedom (dof)}, 					% x label
xlabel style={font=\fontsize{10pt}{12pt}\selectfont},		% x label font
ylabel={\rev{weighted sum of error indicators}},				% y label
ylabel style={font=\fontsize{10pt}{12pt}\selectfont}, 		% y label font
ymajorgrids=true, xmajorgrids=true, grid style=dashed,	% grid 
xmin=81, xmax=10399856,						% x-axis limit
ymin = 0.012,	 ymax = 0.91,							% y-axis limit
legend style={legend pos=south west, legend cell align=left, fill=none}
]
\addplot[black,mark=o,mark size=2.2pt]		table[x=dofs, y=error]{\one};
\addplot[blue,mark=square,mark size=2pt]		table[x=dofs, y=error_s]{\one};
\addplot[red,mark=diamond,mark size=2.6pt]		table[x=dofs, y=error_p]{\one};
\legend{
$\bar\eta_\ell$ (total),
$\bar\mu_\ell$ (spatial),
$\bar\tau_\ell$ (parametric)
}
\end{loglogaxis}
\end{tikzpicture}
%%%%
\quad
%%%%
\begin{tikzpicture}
\pgfplotstableread{sl_sc_test1CC.dat}{\two}
\begin{loglogaxis}
[
width = 7.5cm, height = 7.5cm,						% figure's size
ytick={1, 0.1, 0.01},
xlabel={degrees of freedom (dof)}, 					% x label
xlabel style={font=\fontsize{10pt}{12pt}\selectfont},		% x label font
ylabel={\rev{weighted sum of error indicators}},				% y label
ylabel style={font=\fontsize{10pt}{12pt}\selectfont}, 		% y label font
ymajorgrids=true, xmajorgrids=true, grid style=dashed,	% grid 
xmin=81, xmax=10399856,						% x-axis limit
ymin = 0.012,	 ymax = 0.91,							% y-axis limit
legend style={legend pos=south west, legend cell align=left, fill=none}
]
\addplot[black,mark=o,mark size=2.2pt]		table[x=dofs, y=error]{\two};
\addplot[blue,mark=square,mark size=2pt]		table[x=dofs, y=error_s]{\two};
\addplot[red,mark=diamond,mark size=2.6pt]		table[x=dofs, y=error_p]{\two};
\legend{
$\bar\eta_\ell$ (total),
$\bar\mu_\ell$ (spatial),
$\bar\tau_\ell$ (parametric)
}
\end{loglogaxis}
\end{tikzpicture}
\caption{Test case I: evolution of the \rev{weighted sums of error indicators} for Leja (left) and CC~(right) points.
The axes limits  are identical in the left and right plots.}
\label{fig:test1:indicators}
\end{figure}

\begin{figure}[!th]
\begin{tikzpicture}
\pgfplotstableread{sl_sc_test1Leja.dat}{\one}
\begin{loglogaxis}
[
width = 7.5cm, height = 7.5cm,						% figure's size
xlabel={degrees of freedom (dof)}, 					% x label
xlabel style={font=\fontsize{10pt}{12pt}\selectfont},		% x label font
ylabel={estimated error},				% y label
ylabel style={font=\fontsize{10pt}{12pt}\selectfont}, 		% y label font
ymajorgrids=true, xmajorgrids=true, grid style=dashed,	% grid 
xmin=81, xmax=10399856,						% x-axis limit
ymin = 0.005,	 ymax = 0.92,							% y-axis limit
legend style={legend pos=south west, legend cell align=left, fill=none}
]
\addplot[black,mark=o,mark size=2.2pt]		table[x=dofs, y=error_d]{\one};
\addplot[blue,mark=square,mark size=2pt]		table[x=dofs, y=error_s_d]{\one};
\addplot[red,mark=diamond,mark size=2.6pt]		table[x=dofs, y=error_p_d]{\one};
\legend{
$\eta_\ell$ (total),
$\mu_\ell$ (spatial),
$\tau_\ell$ (parametric)
}
\end{loglogaxis}
\end{tikzpicture}
%%%%
\quad
%%%%
\begin{tikzpicture}
\pgfplotstableread{sl_sc_test1CC.dat}{\two}
\begin{loglogaxis}
[
width = 7.5cm, height = 7.5cm,						% figure's size
xlabel={degrees of freedom (dof)}, 					% x label
xlabel style={font=\fontsize{10pt}{12pt}\selectfont},		% x label font
ylabel={estimated error},				% y label
ylabel style={font=\fontsize{10pt}{12pt}\selectfont}, 		% y label font
ymajorgrids=true, xmajorgrids=true, grid style=dashed,	% grid 
xmin=81, xmax=10399856,						% x-axis limit
ymin = 0.005,	 ymax = 0.92,							% y-axis limit
legend style={legend pos=south west, legend cell align=left, fill=none}
]
\addplot[black,mark=o,mark size=2.2pt]		table[x=dofs, y=error_d]{\two};
\addplot[blue,mark=square,mark size=2pt]		table[x=dofs, y=error_s_d]{\two};
\addplot[red,mark=diamond,mark size=2.6pt]		table[x=dofs, y=error_p_d]{\two};
\legend{
$\eta_\ell$ (total),
$\mu_\ell$ (spatial),
$\tau_\ell$ (parametric)
}
\end{loglogaxis}
\end{tikzpicture}
\caption{Test case I: evolution of the error estimates for Leja (left) and CC (right)  points.
The axes limits  are identical in the left and right plots.}
\label{fig:test1:errors}
\end{figure}
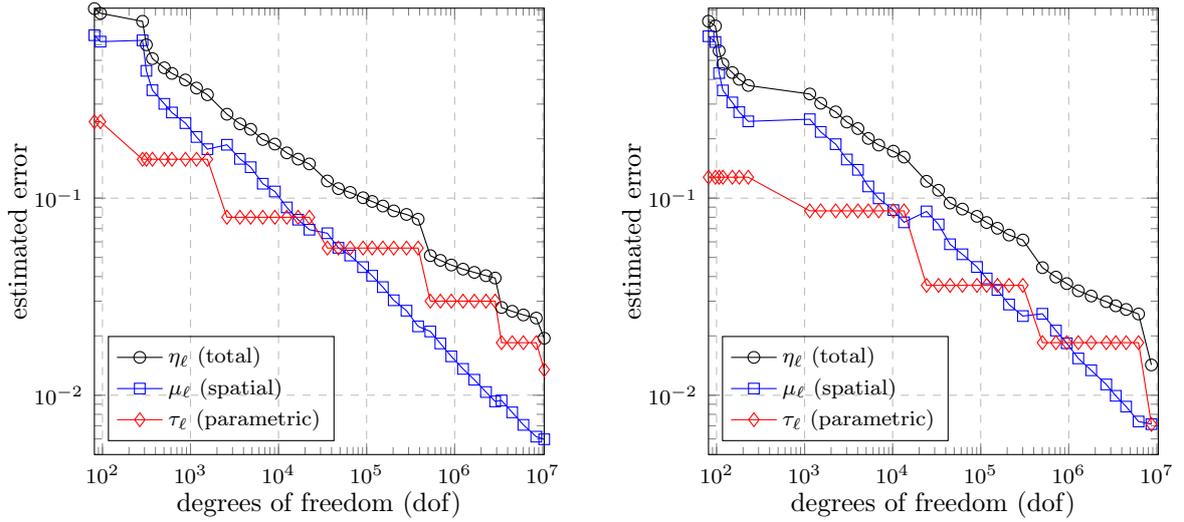

We run Algorithm~\ref{algorithm} with the initial  mesh $\TT_0$ (a uniform partition  of $D$ containing 128 right-angled triangles)
and  with $\tt{error tolerance}$ set to {\tt 2e-2}.
The right plot in Figure~\ref{fig:spatial:domain} shows the finite element mesh after 18 iterations of Algorithm~\ref{algorithm}
using Leja collocation points in the parameter domain.
This mesh is locally refined to resolve singularities at the corners of $D$ and at the boundaries of subdomains.
The magnitudes of $\{a_m(x)\}_{m=1}^8$ and $f(x)$ affect the priority and the strength of refinement around the edges of subdomains.

The tolerance was satisfied after 32 spatial refinement steps and 6 parametric enrichment steps (38 iterations in total) for Leja points and
after 30 spatial and 4 parametric enrichment steps (34 iterations in total) for CC points. %Clenshaw--Curtis points.
\rev{The evolution of the weighted sums of error indicators is presented in Figure~\ref{fig:test1:indicators}, whereas
the evolution of error estimates is reported in Figure~\ref{fig:test1:errors}.}
The key point here is that the 
combined error estimate  $\eta_\ell := \mu_\ell  + \tau_\ell$ decreases 
at every iteration.
In contrast, the total \rev{weighted sum of all error indicators,} $\bar\eta_{\ell} := \bar\mu_{\ell}  + \bar\tau_{\ell}$\rev{,}
can be seen to {\it increase} at iterations that follow a parametric enrichment; see Figure~\ref{fig:test1:indicators}.
This is caused by a `jump' of the spatial error indicator $\bar\mu_{\ell}$ that occurs every time when the set of collocation point expands.
In addition to this, the mesh assigned to the new collocation point may be unsuitable for the sample of the diffusion coefficient at this point,
which causes the growth of the two-level spatial error indicators associated with new collocation points in comparison with
the spatial error indicators associated with previous collocation points.
In the experiments we carried out, we have not observed `jumps' in \emph{combined error estimates} $\eta_\ell$;
see, e.g., Figure~\ref{fig:test1:errors}.
However, as we proved in Theorem~\ref{theorem:main}, even if such `jumps' occur,
they are bounded by the terms converging to zero.

%-----------------------------------------------------------------------------------------
\subsection{Test case II:  nonaffine coefficient data} \label{sec:nonaffineresults}

In this case, we set $f = 1$ and solve the model problem~\eqref{eq:pde:strong}
with coefficient $a(x, \y) = \exp(h( x, \y))$ on the L-shaped domain $D = (-1, 1)^2\backslash (-1, 0]^2$.
We set the exponent field $h(x, \y)$ to have affine dependence on parameters $y_m$, i.e.,
\begin{align} \label{kll}
h(x, \y) = h_0(x) + \sum_{m = 1}^M h_m(x) \, y_m,\quad
   x \in D,\ \y \in \Gamma.
\end{align}
The expansion coefficients $h_m$, $m =0, 1, \dots, M$,  are chosen
to represent planar Fourier modes of increasing total order.
Thus, we fix  $h_0(x) := 1$ and set
\begin{equation}
\label{diff_coeff_Fourier}
h_m(x) := \alpha_m \cos(2\pi\beta_1(m)\,x_1) \cos(2\pi\beta_2(m)\,x_2),\  x=(x_1,x_2) 
\in D.
\end{equation}
The modes are ordered so that for any $m \in \N$,
\begin{equation}
  \beta_1(m) = m - k(m)(k(m)+1)/2\ \ \hbox{and}\ \ \beta_2(m) =k(m) - \beta_1(m)
\end{equation}
with $k(m) = \lfloor -1/2 + \sqrt{1/4+2m}\rfloor$.
Furthermore, to ensure that the diffusion coefficient satisfies~\eqref{eq:nonaffine},
the amplitudes $\alpha_m$ in~\eqref{diff_coeff_Fourier} are chosen as follows:
\begin{equation} \label{eq:exp2:alphas}
  \alpha_1 = 0.498\ \ 
  \text{and}\ \ 
  \alpha_m = \bar\alpha m^{-1}\ \text{for $m=2, \dots, M$  with $ \bar\alpha = 0.547$}.
\end{equation}
Indeed, differentiating the diffusion coefficient with respect to parameters, we obtain
\begin{equation*}
    \frac{\partial^\kk a(\cdot, \y)}{\partial \y^\kk} =
    a(\cdot, \y) \prod_{m=1}^Mh^{k_m}_m(x).
\end{equation*}
Thus, 
\begin{equation*} 
   \left\lVert
   a^{-1}(\cdot, \y) \, \frac{\partial^\kk a(\cdot, \y)}{\partial \y^\kk}
   \right\rVert_{L^{\infty} (D)} =
   \left\lVert\ \prod_{m=1}^Mh^{k_m}_m(x) \right\rVert_{L^{\infty} (D)} \le \prod_{m=1}^M\alpha^{k_m}_m \le
   (2\dd)^{-\kk} \, \kk!
\end{equation*}
for $\alpha_m$ given in~\eqref{eq:exp2:alphas} and for some vector $\dd > \1$,
as required by~\eqref{eq:nonaffine}.

\begin{figure}[!thp]
\begin{tikzpicture}
\pgfplotstableread{sl_sc_test2Leja.dat}{\one}
\begin{loglogaxis}
[
width = 7.5cm, height = 7.5cm,						% figure's size
xlabel={degrees of freedom (dof)}, 					% x label
xlabel style={font=\fontsize{10pt}{12pt}\selectfont},		% x label font
ylabel={\rev{weighted sum of error indicators}},				% y label
ylabel style={font=\fontsize{10pt}{12pt}\selectfont}, 		% y label font
ymajorgrids=true, xmajorgrids=true, grid style=dashed,	% grid 
xmin=65, xmax=10107734,						% x-axis limit
ymin = 2*10^(-3),	 ymax = 0.2,							% y-axis limit
legend style={legend pos=south west, legend cell align=left, fill=none}
]
\addplot[black,mark=o,mark size=2.2pt]		table[x=dofs, y=error]{\one};
\addplot[blue,mark=square,mark size=2pt]		table[x=dofs, y=error_s]{\one};
\addplot[red,mark=diamond,mark size=2.6pt]		table[x=dofs, y=error_p]{\one};
\legend{
$\bar\eta_\ell$ (total),
$\bar\mu_\ell$ (spatial),
$\bar\tau_\ell$ (parametric)
}
\end{loglogaxis}
\end{tikzpicture}
%%%%
\quad
%%%%
\begin{tikzpicture}
\pgfplotstableread{sl_sc_test2CC.dat}{\two}
\begin{loglogaxis}
[
width = 7.5cm, height = 7.5cm,						% figure's size
xlabel={degrees of freedom (dof)}, 					% x label
xlabel style={font=\fontsize{10pt}{12pt}\selectfont},		% x label font
ylabel={\rev{weighted sum of error indicators}},				% y label
ylabel style={font=\fontsize{10pt}{12pt}\selectfont}, 		% y label font
ymajorgrids=true, xmajorgrids=true, grid style=dashed,	% grid 
xmin=65, xmax=10107734,						% x-axis limit
ymin = 2*10^(-3),	 ymax = 0.2,							% y-axis limit
legend style={legend pos=south west, legend cell align=left, fill=none}
]
\addplot[black,mark=o,mark size=2.2pt]		table[x=dofs, y=error]{\two};
\addplot[blue,mark=square,mark size=2pt]		table[x=dofs, y=error_s]{\two};
\addplot[red,mark=diamond,mark size=2.6pt]		table[x=dofs, y=error_p]{\two};
\legend{
$\bar\eta_\ell$ (total),
$\bar\mu_\ell$ (spatial),
$\bar\tau_\ell$ (parametric)
}
\end{loglogaxis}
\end{tikzpicture}
\caption{Test case II: evolution of the \rev{weighted sums of error indicators} for Leja (left) and CC (right) points.
The axes limits  are identical in the left and right plots.}
\label{fig:test2:indicators}
\end{figure}

\begin{figure}[!thp]
\begin{tikzpicture}
\pgfplotstableread{sl_sc_test2Leja.dat}{\one}
\begin{loglogaxis}
[
width = 7.5cm, height = 7.5cm,						% figure's size
xlabel={degrees of freedom (dof)}, 					% x label
xlabel style={font=\fontsize{10pt}{12pt}\selectfont},		% x label font
ylabel={estimated error},				% y label
ylabel style={font=\fontsize{10pt}{12pt}\selectfont}, 		% y label font
ymajorgrids=true, xmajorgrids=true, grid style=dashed,	% grid 
xmin=65, xmax=10107734,						% x-axis limit
ymin = 4*10^(-4),	 ymax = 0.1,							% y-axis limit
legend style={legend pos=south west, legend cell align=left, fill=none}
]
\addplot[black,mark=o,mark size=2.2pt]		table[x=dofs, y=error_d]{\one};
\addplot[blue,mark=square,mark size=2pt]		table[x=dofs, y=error_s_d]{\one};
\addplot[red,mark=diamond,mark size=2.6pt]		table[x=dofs, y=error_p_d]{\one};
\legend{
$\eta_\ell$ (total),
$\mu_\ell$ (spatial),
$\tau_\ell$ (parametric)
}
\end{loglogaxis}
\end{tikzpicture}
%%%%
\quad
%%%%
\begin{tikzpicture}
\pgfplotstableread{sl_sc_test2CC.dat}{\two}
\begin{loglogaxis}
[
width = 7.5cm, height = 7.5cm,						% figure's size
xlabel={degrees of freedom (dof)}, 					% x label
xlabel style={font=\fontsize{10pt}{12pt}\selectfont},		% x label font
ylabel={estimated error},				% y label
ylabel style={font=\fontsize{10pt}{12pt}\selectfont}, 		% y label font
ymajorgrids=true, xmajorgrids=true, grid style=dashed,	% grid 
xmin=65, xmax=10107734,						% x-axis limit
ymin = 4*10^(-4),	 ymax = 0.1,							% y-axis limit
legend style={legend pos=south west, legend cell align=left, fill=none}
]
\addplot[black,mark=o,mark size=2.2pt]		table[x=dofs, y=error_d]{\two};
\addplot[blue,mark=square,mark size=2pt]		table[x=dofs, y=error_s_d]{\two};
\addplot[red,mark=diamond,mark size=2.6pt]		table[x=dofs, y=error_p_d]{\two};
\legend{
$\eta_\ell$ (total),
$\mu_\ell$ (spatial),
$\tau_\ell$ (parametric)
}
\end{loglogaxis}
\end{tikzpicture}
\caption{Test case II: evolution of the error estimates for Leja (left) and CC (right) points.
The axes limits  are identical in the left and right plots.}
\label{fig:test2:errors}
\end{figure}
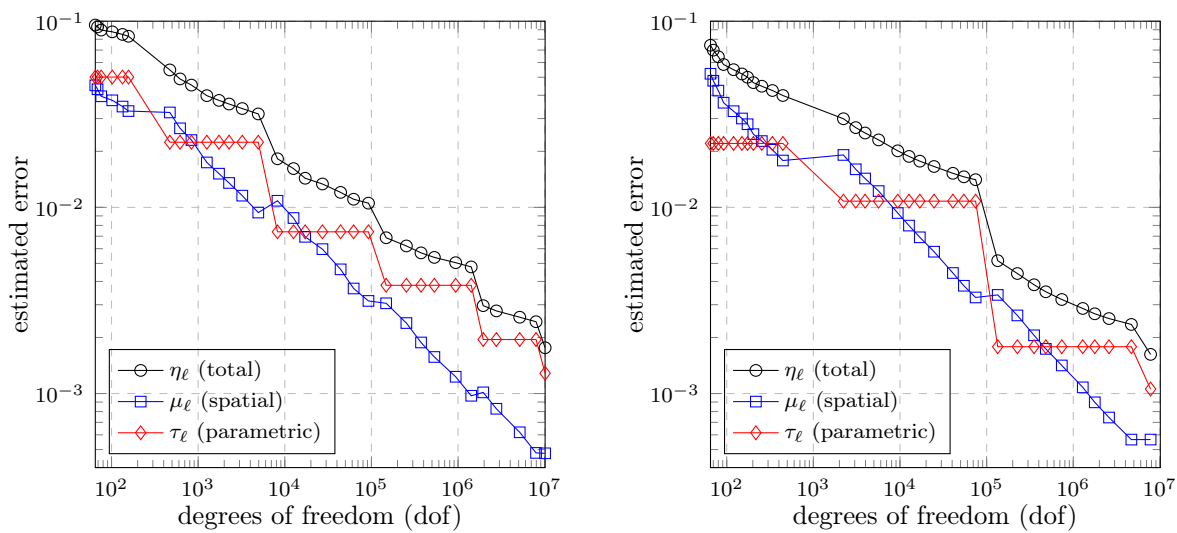

For this test case, we set the dimension of the parameter domain to $M = 4$ and run Algorithm~\ref{algorithm}
with $\tt{error tolerance} =$ {\tt 2e-3}.
This tolerance was reached after 31 iterations (including 5 parametric refinement steps) for Leja points and
after 31 iterations (including 3 parametric enrichments) for CC points.
We record the evolution of
\rev{the weighted sums of} error indicators
as well as the evolution of the corresponding error estimates;
see Figures~\ref{fig:test2:indicators} and~\ref{fig:test2:errors}.
From these plots we can see that both the \rev{weighted sums of} error indicators and the error estimates show similar behavior
to that observed in Test Case~I.
Additionally, for both test cases, we observe that the execution of the algorithm with Leja collocation points
requires more parametric enrichments than that with CC points to reach the same tolerance
\rev{(6 vs. 4 enrichments for test case~I and 5 vs. 3 enrichments for test case~II).
As a consequence, the overall computational time is increased when using Leja points compared to using CC points
(8216 seconds vs. 7757 seconds for test case~I and 5427 seconds vs. 3936 seconds for test case~II).}
This is explained by the fact that every new multi-index generates more collocation points of the CC type than those of the Leja type.
For instance, in 1D, each index $i \in \N$ generates one new Leja collocation point and $2^{i-1}$ new CC~points.

\rev{
%-----------------------------------------------------------------------------------------
\section{Concluding remarks} \label{sec:conclusions}

Adaptive algorithms provide effective solution strategies for high-dimensional parametric PDE problems.
They generate accurate and fast-converging approximations that resolve local spatial features and
adapt to the parametric anisotropy of the PDE solution.
While many adaptive algorithms have been designed and implemented in this context,
the mathematical analysis of these solution strategies is much less developed.
In this paper, for a model parametric PDE problem, we have performed the convergence analysis of an adaptive SC-FEM algorithm
driven by hierarchical a posteriori error indicators proposed in~\cite{Bespalov22}.
Our main result in Theorem~\ref{theorem:main} provides a theoretical guarantee that, for any given positive tolerance,
the proposed adaptive algorithm terminates after a finite number of iterations.
Our theoretical results are valid for spatial domains in $\R^2$ or $\R^3$ and
for affine or non-affine finite-dimensional parametrization of PDE inputs.

In this work, we have employed the \emph{single-level} (rather than \emph{multilevel}) construction of SC-FEM approximations
that assigns the same finite element space to all collocations points.
This choice is primarily motivated by the results of numerical experiments in~\cite{Bespalov22, BS23}.
These results have indicated that the single-level version is likely
to be more efficient when the same adaptively refined finite element mesh can adequately resolve solution features
for a range of individually sampled problems, which is often the case for the model parametric problem~\eqref{eq:pde:strong}.
While the extension of our convergence analysis to the adaptive multilevel SC-FEM algorithm proposed in~\cite{BS23}
is of interest from the theoretical point of view,
this extension is nontrivial due to a different marking strategy (see~\cite[Algorithm~2]{BS23})
and the need to incorporate an additional adaptive strategy for defining suitable meshes for newly added collocation points (see~\cite[Algorithm~3]{BS23}).

Other possible extensions of this work include: (i)~the parametrization of PDE inputs in terms of
countably infinite number of random parameters (i.e., $M = \infty$) and the associated algorithmic aspects of dimension adaptivity
(see, e.g.,~\cite[section~7]{GuignardN18}); and
(ii)~the important yet challenging case of unbounded random parameters
that appear, e.g., in parametric representations of log-normal random fields.
The progress in these directions hinges on finding reliable a posteriori error estimates and appropriate error indicators and
will require a nontrivial extension of the analysis and algorithmic developments in~\cite{Bespalov22}.

}

\bibliographystyle{alpha}
\bibliography{references}
\end{document}